\newtheorem{theorem}{Theorem}[section]
\newtheorem{proposition}[theorem]{Proposition}
\newtheorem{corollary}[theorem]{Corollary}
\newtheorem{lemma}[theorem]{Lemma}
\theoremstyle{definition}
\newtheorem{remark}[theorem]{Remark}
\begin{document}

\title[Willmore obstacle problems]{Willmore obstacle problems \\ under Dirichlet boundary conditions}

\author[H.-Ch.~Grunau]{Hans-Christoph Grunau}
\address[H.-Ch.~Grunau]{Fakult\"{a}t f\"{u}r Mathematik, Otto-von-Guericke-Universit\"{a}t, Postfach 4120, \\39016 Magdeburg, Germany}
\email{hans-christoph.grunau@ovgu.de}
\author[S.~Okabe]{Shinya Okabe}
\address[S.~Okabe]{Mathematical Institute, Tohoku University, Aoba, Sendai 980-8578, Japan}
\email{shinya.okabe@tohoku.ac.jp}

\keywords{Obstacle problem, elastic energy, Willmore energy, surfaces of revolution, Dirichlet boundary conditions}
\subjclass[2010]{Primary: 49Q10, 49J40, Secondary: 49J05, 49J10, 53C42, 35J87}

\date{\today}

\begin{abstract}
We consider obstacle problems  under Dirichlet boundary conditions
for Euler's elastica functional in the class of one-dimensional graphs over the real axis and 
for the  Willmore functional in the class of  surfaces of revolution. 
We prove the existence of minimisers of the obstacle problems under the assumption that the elastic or the  Willmore energy resp. under the unilateral constraint is below a universal bound. We address the question whether such bounds are necessary in order to ensure the solvability of the obstacle problems. 
Moreover, we give several instructive examples of obstacles such that minimisers exist. 
\end{abstract}

\maketitle

%%%%%%%%%%%%%%%%%%%%%%%%%%%%%%%%%%%%%
%%%%%%%%%%%%%%%%%%%%%%%%%%%%%%%%%%%%%
\section{Introduction} \label{section:1}

Euler's elastica functional, which is the integral of the squared curvature along a curve, and its two-dimensional analogue, the integral of the squared mean curvature over a surface, were introduced already in the 18th and 19th century respectively, see \cite{Euler} and \cite{Poisson}. The latter is nowadays called after Willmore who reintroduced it in the 1960's, see \cite{Wil65}. Employing the modern theory of the Calculus of Variations a number of impressive results has been proved since then, mainly for closed surfaces. One may see e.g. the survey articles by Kuwert and Sch\"atzle \cite{KuSch} and by Marques and Neves \cite{MarquesNeves} and references therein.

Boundary value problems for the elastica and for the Willmore functional have gained attention only more recently. Dirichlet boundary conditions prescribe the position and the direction of the boundary of the unknown curve or surface,
while Douglas or Navier boudary conditions prescribe only the position. The latter results in a natural boundary condition for the (mean) curvature.
As for the elastica functional one may see e.g. \cite{DD}, which relies on Euler's observations, references therein and subsequent works.
An existence results for the Dirichlet problem for the Willmore functional was proved  by Sch\"atzle \cite{Schaetzle} in the  class of branched immersions in $\mathbb{R}^3\cup \{\infty\}$. 
In \cite{DGR} existence of minimisers of a relaxed Willmore functional in the class of graphs over two-dimensional domains was proved.
The Douglas (or Navier) boundary value problem was studied by Novaga and Pozzetta in \cite{NovagaPozzetta,Pozzetta}.
In the class of surfaces of revolution existence results for  Willmore minimisers under Dirichlet boundary conditions were obtained in \cite{DDG,DFGS,EichGr}; see also references therein. Imposing axial symmetry decreases the analytical difficulties and permits to uncover detailed analytical and geometric properties of  minimisers.

 For 
a special (Canham-) Helfrich functional, where an area term is added to the Willmore functional, an existence result for branched immersions in $\mathbb{R}^3$ was found by Eichmann in \cite{Eichmann2019}. This is somehow related to \cite{dLPR}, where 
Da~Lio,  Palmurella, and  Rivi\`{e}re imposed an area constraint  in order to minimise the Willmore functional. 

%%%%%%%%%%%%%%%%%%%%%%%%%%%%%%%%%%%%%%%%%%%%%%%%%%%%%%%%%%%%%%%%%%%%%%%%%%%%
The present paper is concerned with \emph{obstacle problems} under
Dirichlet boundary conditions for Euler's elastica functional and  the  Willmore functional.  More precisely, we consider minimisation problems for these functionals among curves or surfaces of revolution resp. with Dirichlet boundary conditions under a  unilateral constraint. 
Recently the minimisation problem for  the  elastica 
(or the one-dimensional Willmore) functional 
among graphs of functions over the real axis with Navier boundary conditions 
under a unilateral constraint has been intensively studied (\cite{DD,Miura,Mueller_2019,Yoshizawa}): 
For a given obstacle function $\psi : [0,1] \to \mathbb{R}$, find a function $u : [0,1] \to \mathbb{R}$ such that $u$ attains 
\begin{equation}
\label{eq:1.1}
\inf_{v \in K(\psi)} W(v), 
\end{equation}  
where
\begin{align*}
W(v) &:= \int^1_0 \kappa(x)^2 \sqrt{1+v'(x)^2}\, dx 
         = \int^1_0 \left( \dfrac{v''(x)}{(1+v'(x)^2)^{3/2}} \right)^2 \sqrt{1+v'(x)^2}\, dx \\
         &= \int^1_0 \dfrac{v''(x)^2}{(1+v'(x)^2)^{5/2}}\, dx, 
\end{align*}
is Euler's  elastica (or  the one-dimensional Willmore) functional and 
\begin{equation*}
K(\psi):= \{ v  \in H^2(0,1) \mid v(0)=v(1)=0, \,\, v \ge \psi \,\,\, \text{in} \,\,\, [0,1] \}
\end{equation*}
is the class of admissible functions.
One should have in mind that it is important to work in classes of graphs
(which is somehow a further obstacle condition) because otherwise, thanks to the scaling behaviour of the elastica functional, one may have sequences of arbitrarily large nonprojectable curves with ``very small'' elastic energy,
cf. Figure 2 and the corresponding remarks in the introduction of \cite{Mueller_2019}.
Problem \eqref{eq:1.1} was firstly studied by Dall'Acqua and Deckelnick \cite{DD} and 
they proved that $W$ has a minimiser in $K(\psi)$ under an explicit smallness condition on $\psi$. 
In view of this result it is an obvious question whether there is a threshold for 
the obstacles beyond which they no longer permit a solution of the minimisation problem for $W$ in $K(\psi)$. 
For symmetric cone obstacles $\psi$, i.e., $\psi(x)=\psi(1-x)$ for all $x \in [0,1]$ and $\psi$ is affine on $(0, 1/2)$, such that $\psi(0)=\psi(1)<0$ and $\psi(1/2)>0$, 
the question was completely solved as follows: 
(i) if $\psi(1/2) < 2/c_0$, then there exists a unique minimiser of $W$ in $K_{sym}(\psi)$; (ii) if $\psi(1/2) \ge 2/c_0$, then there is no minimiser of $W$ not only in $K_{sym}(\psi)$ but also in $K(\psi)$, 
where 
\begin{equation*}
K_{sym}(\psi) := \{ v \in K(\psi) \mid v(x)=v(1-x) \,\,\, \text{for all} \,\,\, x \in [0,1] \},  
\end{equation*}
and 
\begin{equation*}
c_0 := \int_{\mathbb{R}} \dfrac{d \tau}{(1 + \tau^{2})^{5/4}}
%=\Beta\left( \frac{1}{2}, \frac{3}{4}\right)
= \sqrt{\pi} \frac{\Gamma(3/4)}{\Gamma(5/4)}=2.396280469\ldots. 
\end{equation*}
We note that the existence of a  minimiser in the first case follows from  Dall'Acqua--Deckelnick~\cite{DD}. Its uniqueness 
was independently proved by Miura~\cite{Miura} and Yo\-shi\-za\-wa~\cite{Yoshizawa}.
The non-existence of minimisers in the case $\psi(1/2)>2/c_0$ was proved by M\"uller~\cite{Mueller_2019} and in the critical case $\psi(1/2)=2/c_0$ independently  by Miura~\cite{Miura} and   Yoshizawa~\cite{Yoshizawa}.
  
One of the purposes of this paper is to extend the studies of problem \eqref{eq:1.1} to one-dimensional graphs of functions and to surfaces of revolution with \emph{Dirichlet} boundary conditions. 
In the one-dimensional setting, we impose the following boundary conditions
\begin{equation}\label{eq:1.0}
u(0)=u(1)=0 \quad 
\mbox{\ as well as\ }\quad 
u'(0)=u'(1)=0.
\end{equation}
The conditions on the derivative will imply that in contrast to \cite{DD}
minimisers (and even admissible functions) may no longer be concave and look
close to $0$ and $1$ really different from those in \cite{DD}.
Special emphasis is laid on studying the necessity of smallness conditions for  general symmetric obstacles.

We describe now our main results in some more detail.
First we consider the obstacle problem for $W(\cdot)$ among symmetric
one-dimensional  graphs. 
Namely, we only consider symmetric obstacles $\psi$ which are subject to the following basic condition: 
\begin{align} \label{eq:A} \tag{A}
& \psi \in C^0([0,1]),\quad 
\psi(x)= \psi(1-x) \,\,\,\text{for all} \,\,\, x \in (0,1),\\
& \psi (0)=\psi (1) <0,\quad 
\exists x_0\in (0,1): \psi (x_0) >0.\nonumber
\end{align}
The latter condition is imposed to avoid $u(x)\equiv 0$ as a possible trivial minimiser.
The negativity condition is needed to ensure regularity of minimisers up to the boundary $\{0,1\}$.
For such $\psi$ we consider the minimisation problem   
\begin{equation}
\label{eq:1.2}
\inf_{v \in \mathcal{M}(\psi)} W(v) 
\end{equation}  
with 
\begin{equation}\label{eq:1.2.a}
\mathcal{M}(\psi) := \{ u \in H^2_0(0,1) \mid u(x) = u(1-x), \,\,\, u(x) \ge \psi(x), \,\,\, \text{for all} \,\,\, x \in (0,1) \}. 
\end{equation}
Note that $u\in H^2_0(0,1) $ encodes the Dirichlet boundary conditions in \eqref{eq:1.0}.

As for the existence of minimisers we have: 
%%%%%%%%%%%%%%%%%%%%%%%%%%%%%%%%%%%%%
\begin{theorem} \label{theorem:1.1}
Assume that $\psi$ satisfies conditions \eqref{eq:A} and 
\begin{equation} \label{eq:B} \tag{B}
\inf_{v \in \mathcal{M}(\psi)} W(v) < 4c_0^2. 
\end{equation}
Then there exists $u \in \mathcal{M}(\psi)$ such that 
\begin{align*}
W(u) = \inf_{v \in \mathcal{M}(\psi)} W(v). 
\end{align*}
\end{theorem}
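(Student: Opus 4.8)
The plan is to run the direct method in the calculus of variations; the only step that I expect to be non‑routine is an a priori $C^1$‑bound on a minimising sequence, and this is precisely where the smallness hypothesis \eqref{eq:B} together with the sharp value $4c_0^2$ will enter. The preparatory observation is that for $u\in H^2(0,1)$, writing $\varphi:=\arctan u'\in H^1(0,1)\cap C^0([0,1])$ for the tangent angle, one has $\varphi'=u''/(1+(u')^2)$ and $\cos\varphi=(1+(u')^2)^{-1/2}\in(0,1]$, hence $W(u)=\int_0^1(\varphi')^2\cos\varphi\,dx$; moreover the substitution $\tau=\tan\theta$ gives $c_0=\int_{-\pi/2}^{\pi/2}\sqrt{\cos\theta}\,d\theta$. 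I will use the odd, $1$‑Lipschitz, strictly increasing primitive $F(\theta):=\int_0^\theta\sqrt{\cos t}\,dt$ on $[-\pi/2,\pi/2]$, which satisfies $F(\pm\pi/2)=\pm c_0/2$. Since \eqref{eq:B} in particular makes the infimum finite, $\mathcal{M}(\psi)$ is non‑empty; I fix a minimising sequence $(u_k)\subset\mathcal{M}(\psi)$ with $W(u_k)\to m:=\inf_{\mathcal{M}(\psi)}W$ and a constant $m'$ with $m<m'<4c_0^2$ and $W(u_k)\le m'$ for all large $k$.

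\emph{Step 1 (the a priori estimate, the main point).} Since $u_k\in H^2_0(0,1)$ is symmetric, $\varphi_k(0)=\varphi_k(1)=0$ and $\varphi_k(1/2)=u_k'(1/2)=0$, and $\Phi_k:=\|\varphi_k\|_{L^\infty(0,1)}$ equals $\|\varphi_k\|_{L^\infty(0,1/2)}$ by symmetry, attained at some $\bar x_k\in[0,1/2]$. On $[0,1/2]$ the continuous $W^{1,1}$‑function $x\mapsto F(\varphi_k(x))$ vanishes at both endpoints and equals $\pm F(\Phi_k)$ at $\bar x_k$, so by Cauchy--Schwarz
\[
2F(\Phi_k)\le\int_0^{1/2}|\varphi_k'|\sqrt{\cos\varphi_k}\,dx\le\Big(\tfrac12\int_0^{1/2}(\varphi_k')^2\cos\varphi_k\,dx\Big)^{1/2}.
\]
The analogous bound holds on $[1/2,1]$, and summing the two resulting inequalities yields $W(u_k)\ge 16\,F(\Phi_k)^2$. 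Comparing with $16F(\Phi_k)^2\le m'<4c_0^2=16F(\pi/2)^2$ and using that $F$ is increasing, I obtain $\Phi_k\le F^{-1}\big(\tfrac14\sqrt{m'}\big)<\tfrac\pi2$, hence $\|u_k'\|_{L^\infty}=\tan\Phi_k\le C_0:=\tan\big(F^{-1}(\tfrac14\sqrt{m'})\big)<\infty$ uniformly for large $k$. Then $\|u_k''\|_{L^2}^2\le(1+C_0^2)^{5/2}W(u_k)\le(1+C_0^2)^{5/2}m'$, and since $u_k(0)=0$ also $\|u_k\|_{L^\infty}\le C_0$; so $(u_k)$ is bounded in $H^2(0,1)$.

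\emph{Step 2 (passing to the limit).} Along a subsequence $u_k\rightharpoonup u$ in $H^2(0,1)$ and, by the compact embedding $H^2(0,1)\hookrightarrow\hookrightarrow C^1([0,1])$, $u_k\to u$ in $C^1([0,1])$. The $C^1$‑convergence gives $u(0)=u(1)=u'(0)=u'(1)=0$, i.e.\ $u\in H^2_0(0,1)$, while $u(x)=u(1-x)$ and $u\ge\psi$ on $[0,1]$ pass to the limit, so $u\in\mathcal{M}(\psi)$. For lower semicontinuity I set $a_k:=(1+(u_k')^2)^{-5/4}$ and $a:=(1+(u')^2)^{-5/4}$: then $a_k\to a$ uniformly (all uniformly bounded and bounded below away from $0$), so for $g\in L^2(0,1)$ one has $\int_0^1 g\,a_ku_k''\,dx=\int_0^1(ga_k)u_k''\,dx\to\int_0^1(ga)u''\,dx$, i.e.\ $a_ku_k''\rightharpoonup au''$ in $L^2(0,1)$; weak lower semicontinuity of the $L^2$‑norm then yields
\[
W(u)=\|au''\|_{L^2(0,1)}^2\le\liminf_{k\to\infty}\|a_ku_k''\|_{L^2(0,1)}^2=\liminf_{k\to\infty}W(u_k)=m.
\]
Since $u\in\mathcal{M}(\psi)$ forces $W(u)\ge m$, we conclude $W(u)=m$, which is the assertion.

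I expect the main difficulty to be Step 1: without \eqref{eq:B}, a minimising sequence could concentrate — a thin spike touching the obstacle carries little Willmore energy — and need not admit a limit in $\mathcal{M}(\psi)$. The angle–Cauchy--Schwarz computation above is meant to quantify the fact that $4c_0^2$ is exactly the bending energy that a symmetric $H^2_0$‑graph must expend in order to become vertical, which is precisely what \eqref{eq:B} forbids; the remaining steps are the standard direct method and I anticipate no obstruction there. Note that hypothesis \eqref{eq:A} is used only to guarantee $\mathcal{M}(\psi)\neq\emptyset$ and to keep the limit non‑trivial, not in the a priori estimate itself.
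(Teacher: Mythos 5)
Your proof is correct and follows essentially the same route as the paper: the key a priori $C^1$-bound comes from the identity $W(u)\ge 16\,G(\|u'\|_\infty)^2$ (your $F(\theta)=\int_0^\theta\sqrt{\cos t}\,dt$ is exactly the paper's $G$ reparametrised via $\tau=\tan\theta$, i.e.\ $G(t)=F(\arctan t)$ and $c_0=2F(\pi/2)$), which the paper derives in Lemma~\ref{theorem:2.2} by a three-piece decomposition using $v'(1-x_{\max})=-v'(x_{\max})$, while you split at $x=1/2$ using $\varphi_k(1/2)=0$; the final constant and the subsequent direct-method argument are identical.
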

%%%%%%%%%%%%%%%%%%%%%%%%%%%%%%%%%%%%%
For the regularity of minimisers $u$ of $W$ in $\mathcal{M}(\psi)$, we obtain  as in~\cite{DD} that $u\in W^{3,\infty}$ and $u'''\in BV$, see Proposition \ref{theorem:2.8}. 
Moreover, Theorem~\ref{theorem:1.1} can be extended to the non-symmetric case (see Theorem~\ref{theorem:2.19}), but then a more restrictive smallness condition has to be imposed. 

In Remark \ref{theorem:2.4} we explain how typical obstacles look like such that condition~\eqref{eq:B} is satisfied. While we have to leave open whether condition~\eqref{eq:B} is optimal, 
Theorem~\ref{theorem:1.2} below shows that indeed some kind of smallness condition is necessary in order to have existence of minimisers of $W$ in $\mathcal{M}(\psi)$.
It is natural to ask whether there exists a specific universal bound such that problem~\eqref{eq:1.2} has no solution, if the obstacle violates this bound. 
For this question we obtain an affirmative answer: 
%%%%%%%%%%%%%%%%%%%%%%%%%%%%%%%%%%%%%
\begin{theorem} \label{theorem:1.2}
We assume that the obstacle $\psi$ satisfies condition~\eqref{eq:A} and that there exists a minimiser $u \in \mathcal{M}(\psi)$  of $W(\cdot)$. 
Then the obstacle has to obey the following bound$\colon$
\begin{equation}\label{eq:1.3}
\forall x\in [0,1]:\quad \psi (x) \le  \max_{y\in [0,\infty]} 
 \frac{	1+(1+y^2)^{-1/4}  }{	c_0-G(y)  }
 =1.1890464540\ldots.
\end{equation}
\end{theorem}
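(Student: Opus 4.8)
The plan is to reduce the statement to an upper bound for $M:=\max_{[0,1]}u$, since $\psi(x)\le u(x)\le M$ for every $x\in[0,1]$, and then to extract this bound from the minimality of $u$ by comparison with a degenerating competitor. As a preliminary step I would record that, because $\psi(x_0)>0$, the minimiser is not identically zero and hence must touch the obstacle somewhere; by Proposition~\ref{theorem:2.9} it satisfies $u\in W^{3,\infty}(0,1)$, so that $Y:=\|u'\|_{L^\infty(0,1)}<\infty$ and the graph of $u$ has no vertical tangent. Exploiting the symmetry in $\mathcal M(\psi)$, I may assume that the maximum is attained at some $\bar x\in(0,\tfrac12]$, with $u'(\bar x)=0$ and $u(\bar x)=M$; I treat $\bar x=\tfrac12$ for simplicity, the general case following by the same reasoning on the sub-interval on which $u$ stays above the relevant level.

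The decisive structural feature is that $W$ is \emph{not} coercive on $H^2$: for $v\in H^2(x_1,x_2)$ one has, by the substitution $\tau=v'$ and Cauchy--Schwarz,
\[
\Bigl(\int_{v'(x_1)}^{v'(x_2)}\frac{d\tau}{(1+\tau^2)^{5/4}}\Bigr)^2
=\Bigl(\int_{x_1}^{x_2}\frac{v''}{(1+v'^2)^{5/4}}\,dx\Bigr)^2\le (x_2-x_1)\int_{x_1}^{x_2}\frac{v''^2}{(1+v'^2)^{5/2}}\,dx,
\]
so that driving the slope of an admissible function to $\pm\infty$ on a set of positive measure costs only \emph{finite} Willmore energy. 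Accordingly, if $M$ is too large then $u$ can be beaten by a competitor whose graph develops an almost-vertical cusp, which contradicts minimality. Concretely, for a parameter $y\in[0,\infty]$ let $x_y\in(0,\bar x)$ be the last point before $\bar x$ at which $u'=y$ (if $y>Y$ this is vacuous and nothing is to be proven), put $h_y:=u(x_y)$ and $\ell_y:=\bar x-x_y$; since $u'\in[0,y]$ on $(x_y,\bar x)$ one has $\ell_y\ge (M-h_y)/y$. I would then replace $u$ on the symmetric interval $[x_y,1-x_y]$ by a family $v_\varepsilon\in H^2_0(0,1)$ that agrees with $u$ off this interval and on it is a symmetric $C^1$ arch which leaves $(x_y,h_y)$ with slope $y$, lets its slope increase monotonically to a value $P=P(\varepsilon)\to\infty$, climbs almost straight (hence at almost no Willmore cost) past $\max\psi$, turns over ($P\to-P$) and returns with slope $-y$ at $(1-x_y,h_y)$. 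As $\varepsilon\to0$ the slope of the arch sweeps through $[y,\infty)$, then through all of $\mathbb R$ at the turn, then through $(-\infty,-y]$, so its total curvature variation $\int|v_\varepsilon''|(1+v_\varepsilon'^2)^{-5/4}\,dx$ tends to $2\bigl(\tfrac{c_0}{2}-G(y)\bigr)+c_0=2\bigl(c_0-G(y)\bigr)$; distributing this optimally over the available width $1-2x_y$ by the inequality above yields
\[
\lim_{\varepsilon\to0}W\bigl(v_\varepsilon;[x_y,1-x_y]\bigr)=\frac{4\bigl(c_0-G(y)\bigr)^2}{1-2x_y}.
\]
Moreover $v_\varepsilon\in\mathcal M(\psi)$ for $\varepsilon$ small, because $v_\varepsilon=u\ge\psi$ off the arch, $v_\varepsilon\ge h_y=u(x_y)\ge\psi(x_y)$ near its feet, and $v_\varepsilon\ge\max\psi$ elsewhere on the arch; hence minimality forces $W(u;[x_y,1-x_y])\le 4(c_0-G(y))^2/(1-2x_y)$ for every admissible $y$.

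It remains to convert this family of inequalities into the bound \eqref{eq:1.3}; this is the part I expect to be delicate. The relevant height quantities are furnished by the exact identity
\[
\int_{x_y}^{\bar x}\frac{u'u''}{(1+u'^2)^{5/4}}\,dx=\Bigl[-2(1+u'^2)^{-1/4}\Bigr]_{x_y}^{\bar x}=-\bigl(2-2(1+y^2)^{-1/4}\bigr)
\]
and its counterpart $\int_y^\infty \tau(1+\tau^2)^{-5/4}\,d\tau=2(1+y^2)^{-1/4}$, which measure the height that $u$ gains below, respectively above, slope $y$ (the two together producing the numerator $2+2(1+y^2)^{-1/4}$); combining these with Cauchy--Schwarz and the upper bound just obtained, and then optimising over the matching slope, one should arrive at $M\le\tfrac12\bigl(2+2(1+y^2)^{-1/4}\bigr)/\bigl(c_0-G(y)\bigr)$ for each admissible $y$, hence at \eqref{eq:1.3} after taking the maximum over $y\in[0,\infty]$.

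I expect the main obstacles to be (a) making the degeneration rigorous while keeping the competitors admissible — one must verify that the $v_\varepsilon$ are genuinely $H^2_0$-functions that stay $\ge\psi$ everywhere, the subtle point being near the feet of the arch where $\psi$ is only continuous, and that the energies converge to exactly the stated limit, the sharp constant $c_0-G(y)$ appearing only for a near-optimal choice of the arch's profile (a near-elastica on the steep parts, a near-straight segment on the climb); and (b) the clean extraction of the bound on $M$ sketched above, together with the reduction $\bar x=\tfrac12$ and the borderline cases $y=\infty$ and ``$\max u$ is itself a contact point'', to be treated by approximation. Finally one has to check the elementary but necessary fact that $y\mapsto\tfrac12\bigl(2+2(1+y^2)^{-1/4}\bigr)/\bigl(c_0-G(y)\bigr)$ attains its maximum over $[0,\infty]$ at an interior value, with numerical value $1.1890464540\ldots$.
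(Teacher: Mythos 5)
Your strategy is genuinely different from the paper's, and unfortunately it has a gap at the decisive step. The paper's argument is a comparison-principle argument: using the first integral $\frac{d}{ds}(4\kappa_s^2 + \kappa^4) = 8\kappa_s(\kappa_{ss}+\frac12\kappa^3)$, Lemma~\ref{theorem:2.14} shows that the curvature (in arclength) of the supersolution $u$ dominates the curvature of a comparison \emph{solution} $w$ with matching apex data, whence the angle functions and then the curves themselves are ordered componentwise. This yields that the average slope of $u$ from apex to boundary is bounded by the average slope of the extremal elastica, and maximising that ratio over the matching point (parametrised by $y$) gives \eqref{eq:1.3}. Crucially, this argument uses \emph{only} the supersolution property from Corollary~\ref{theorem:2.5} and the regularity from Proposition~\ref{theorem:2.9}, never minimality — which is why the paper can extend the same reasoning to the Navier case in Remark~\ref{theorem:2.16}.

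Your plan instead uses minimality against a degenerating competitor. Even granting the delicate construction of $v_\varepsilon$ (admissibility near the feet, sharp energy limit), the inequality you obtain, $W(u;[x_y,1-x_y]) \le 4(c_0-G(y))^2/(1-2x_y)$, does not by itself control $M=\max u$. The natural Cauchy--Schwarz lower bound on the left-hand side is $4G(y)^2/(1-2x_y)$, and combining the two only yields $G(y)\le c_0-G(y)$, which is vacuous. To make $M$ appear you would need a lower bound on the Willmore energy in terms of the height gain $M-h_y=\int_{x_y}^{1/2}u'\,dx$, but such a bound does not exist: a thin, tall, near-vertical profile has finite energy with arbitrarily large height, as your own non-coercivity remark shows. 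The two ``height'' identities you invoke, $\int_{x_y}^{\bar x}\frac{u'u''}{(1+u'^2)^{5/4}}\,dx = -\bigl(2-2(1+y^2)^{-1/4}\bigr)$ and $\int_y^\infty\frac{\tau\,d\tau}{(1+\tau^2)^{5/4}}=2(1+y^2)^{-1/4}$, do not compute $M-h_y$ (that would be $\int u'\,dx$, with a different weight), so the passage from the energy inequality to $M\le\frac12(2+2(1+y^2)^{-1/4})/(c_0-G(y))$ is unsupported. Finally, the quantifier in your conclusion is inverted: if your bound held for \emph{every} admissible $y$, taking the infimum (at $y=0$) would give $M\le 2/c_0\approx 0.835$, a stronger statement that is expected to be false by Remark~\ref{theorem:2.15}. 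The correct logic — implemented in the paper — is that the bound holds for \emph{one particular} $y$ determined by the geometry of $u$ (the slope at the matching point of the comparison elastica), and since that $y$ is unknown one takes the maximum over $y\in[0,\infty]$.

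In short, the missing ingredient is precisely the content of Lemma~\ref{theorem:2.14} and the subsequent angle-function comparison: a one-sided comparison of $u$ with an explicit solution $w$. Without some form of that comparison, the competitor inequality alone cannot yield a pointwise bound on $u$.
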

%%%%%%%%%%%%%%%%%%%%%%%%%%%%%%%%%%%%%
Here the function $G$, which appears in \eqref{eq:1.3} and plays a crucial role in the one-dimensional elastica equation, is defined by
\begin{align}\label{eq:1.1.new}
G: \mathbb{R} \to \left( -\dfrac{c_0}{2}, \dfrac{c_0}{2} \right), \quad 
G(t):= \int^{t}_{0} \dfrac{d \tau}{(1 + \tau^{2})^{5/4}}. 
\end{align}

Theorem~\ref{theorem:1.2} says that for any obstacle $\psi$ violating \eqref{eq:1.3} the minimisation problem has no solution in the class  $\mathcal{M}(\psi)$.

We derive \eqref{eq:1.3} as a universal bound for all sufficiently smooth supersolutions of the elastica equation
under Dirichlet boundary conditions. 
Although its optimality for ``admissible'' obstacles $\psi$ (which permit a minimiser) is not proved here, 
we are confident that the universal bound cannot be improved for sufficiently smooth supersolutions, see Remark~\ref{theorem:2.16}. 
Moreover, the proof of Theorem~\ref{theorem:1.2} can be adapted to the Navier boundary value problem, where one can obtain the universal bound $2/c_0$. 
This means that the proof of Theorem~\ref{theorem:1.2} yields a significant generalisation of~\cite[Lemma 4.3]{DD} and~\cite[Theorem 1.1]{Mueller_2019}. 

We adapt Theorem \ref{theorem:1.1} to the minimisation problem for the Willmore functional for surfaces of revolution 
under Dirichlet boundary conditions and a unilateral constraint. 
Let $u : [-1,1] \to (0, \infty)$ be a profile curve with Dirichlet boundary conditions 
\begin{equation}
\label{eq:1.4}
u(1)=u(-1)=\alpha>0, \quad u'(1)=u'(-1)=0. 
\end{equation}
Then the Willmore functional of the corresponding surface of revolution
${\mathcal R}(u): (x,\theta)\mapsto (x,u(x)\cos (\theta),u(x)\sin(\theta))$
with mean curvature $H$ (defined as the mean value of the principal curvatures) is given by 
\begin{align*}
W(u) &:= \int_{{\mathcal R}(u)}H^2 \, dS\\
 &= \dfrac{\pi}{2} \int^1_{-1} \left( \dfrac{1}{u(x) \sqrt{1+(u'(x))^2}} - \dfrac{u''(x)}{(1+(u'(x))^2)^{3/2}} \right)^2 u(x) \sqrt{1 + (u'(x))^2} \, dx.  
\end{align*}
Working in the class of projectable profile curves (i.e. graphs) is helpful in order to ensure compactness of suitable minimising sequences.
 
For obstacles $\psi $ satisfying 
\begin{equation}
\label{eq:C} \tag{C}
\psi \in C^0([-1,1]; (0, \infty)),\quad 
\psi(1)=\psi(-1)>\alpha, \quad \psi(x)=\psi(-x) \,\,\,\text{for all}\,\,\, x \in (-1,1), 
\end{equation}
we consider the minimisation problem 
\begin{equation}
\label{eq:1.5}
\inf_{v \in N_\alpha(\psi)} W(v),  
\end{equation}
with 
\begin{align}
N_\alpha &:=  \{ v \in H^2((-1,1);(0,\infty)) \mid \text{$v$ satisfies \eqref{eq:1.4}},\nonumber\\ 
         & \qquad \qquad \qquad \qquad \qquad \qquad \quad
         v(x)=v(-x) \,\, \text{for all} \,\, x \in [-1,1] \},\nonumber\\
N_\alpha(\psi) &:= \{ v \in N_\alpha \mid v(x) \le \psi(x)\,\, \text{for all} \,\, x \in [-1,1] \}. 
\label{eq:1.5.a}
\end{align}
This means that in contrast to the one-dimensional situation, we look for minimising functions \emph{below} the given obstacle.
Typically one should think of obstacles which satisfy $\psi(x) <\alpha$ 
for $|x|$ close to $0$. Typical ``admissible'' obstacles (where a minimiser exists) are constructed  in Remark~\ref{theorem:3.3}.

We think that this is here the more interesting setting because solutions of the obstacle problem look then completely different from minimisers of the free problem (without obstacle), which are all strictly above $\alpha$. The arguments for minimisation in classes of functions above a given obstacle, however, will be similar.

First we have similarly  as in Theorem \ref{theorem:1.1}: 
%%%%%%%%%%%%%%%%%%%%%%%%%%%%%%%%%%%%%%%%%%%%%%%%
\begin{theorem} \label{theorem:1.3}
Assume that $\psi$ satisfies conditions~\eqref{eq:C} and 
\begin{equation} \label{eq:D} \tag{D}
\inf_{v \in N_\alpha(\psi)} W(v) < 4 \pi. 
\end{equation}
Then there exists $u \in N_\alpha(\psi)$ such that 
\begin{align*}
W(u)= \inf_{v \in N_\alpha(\psi)} W(v). 
\end{align*}
\end{theorem}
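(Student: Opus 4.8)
The plan is to follow the direct method of the calculus of variations, closely mirroring the proof of Theorem~\ref{theorem:1.1} but adapting to the geometry of surfaces of revolution and to the fact that we now minimise \emph{below} the obstacle. Let $m:=\inf_{v\in N_\alpha(\psi)}W(v)$ and pick a minimising sequence $(u_k)\subset N_\alpha(\psi)$ with $W(u_k)\to m<4\pi$. The first task is to obtain uniform a priori bounds. The Willmore energy of a surface of revolution controls the $L^2$-norm of the mean curvature; combined with the Dirichlet data \eqref{eq:1.5} and the Gauss--Bonnet-type or Li--Yau-type inequalities available for such surfaces, a bound $W(u_k)<4\pi$ should prevent the profile curves from touching the axis of revolution, i.e.\ it should yield a uniform lower bound $u_k\ge c>0$ on $[-1,1]$ (this is precisely the role of the threshold $4\pi$, analogous to $4c_0^2$ in the one-dimensional case, and the reason the obstacle condition $\psi>0$ from \eqref{eq:C} is compatible with passing to the limit). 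Simultaneously one extracts a uniform bound on $\int_{-1}^1 (u_k'')^2(1+(u_k')^2)^{-5/2}\,dx$ together with a uniform $H^2$-bound, using the boundary conditions to control lower-order terms.

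Once the uniform bounds are in place, I would extract a subsequence with $u_k\rightharpoonup u$ weakly in $H^2((-1,1))$ and hence $u_k\to u$ in $C^1([-1,1])$ by compact embedding. The $C^1$-convergence immediately passes the boundary conditions \eqref{eq:1.5}, the symmetry $u(x)=u(-x)$, and the obstacle constraint $u\le\psi$ to the limit, and the uniform lower bound $u_k\ge c$ gives $u\ge c>0$, so $u\in N_\alpha(\psi)$. It remains to show $W(u)\le\liminf_k W(u_k)$. Here one writes $W(v)=\frac{\pi}{2}\int_{-1}^1 F(x,v,v',v'')\,dx$ and checks that the integrand is convex in the highest-order variable $v''$ for fixed $(x,v,v')$ with $v>0$ — indeed $F$ is a positive quadratic in $v''$ — while $v\mapsto (v,v')$ converges strongly in $C^1$; standard weak lower semicontinuity theorems (e.g.\ for integrands convex in the top derivative with continuous dependence on the lower-order arguments) then give $W(u)\le\liminf_k W(u_k)=m$. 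Since $u\in N_\alpha(\psi)$ we also have $W(u)\ge m$, hence equality, and $u$ is the desired minimiser.

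The main obstacle I expect is the uniform positive lower bound $u_k\ge c>0$: without it, the limit profile could degenerate to the axis, the functional could fail to be lower semicontinuous (the integrand $F$ is singular as $v\to 0^+$), and the constraint structure would collapse. The key input is that the energy threshold $4\pi$ is exactly the value below which a surface of revolution with the prescribed Dirichlet data \eqref{eq:1.5} cannot ``pinch''; this should follow from a monotonicity or isoperimetric-type argument for the Willmore energy (in the spirit of the analysis of Willmore surfaces of revolution, where $W<4\pi$ rules out the sphere-pinching behaviour), possibly after reparametrising the profile curve by arclength. A secondary technical point is ensuring that the weak lower semicontinuity argument is applied correctly given that the integrand depends on $v$ in the denominator; this is handled because $u_k\to u$ uniformly with $u$ bounded away from $0$, so on the relevant region the integrand is a genuinely convex, continuous function of $v''$ with coefficients converging uniformly. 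The remaining verifications — extraction of subsequences, passage of constraints, and the final comparison $W(u)\ge m$ — are routine.
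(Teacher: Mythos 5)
Your high-level strategy (direct method, $C^1$ a priori bounds, compactness, weak lower semicontinuity) matches the paper's, and you correctly identify the two dangers: the profile could pinch ($u_k\to 0$) and the gradient could blow up. But you do not supply the actual mechanism for either bound, and the tool you gesture at is not the one that works. The paper's Lemma~\ref{theorem:3.2} proceeds in two steps whose \emph{order} is essential. First comes a gradient bound: the identity $W_{[a,b]}(v)=\frac{\pi}{2}W^h_{[a,b]}(v)-2\pi\bigl[v'/\sqrt{1+(v')^2}\bigr]^a_b$ relating the Euclidean and hyperbolic Willmore energies of the profile (equation~\eqref{eq:3.3}), together with $W^h\ge 0$ and the symmetry $v(x)=v(-x)$, gives $W(v)\ge 4\pi K_1/\sqrt{1+K_1^2}$ where $K_1=\max|v'|$; the hypothesis $W(v)<4\pi$ then forces $K_1\le K<\infty$. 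Only with this Lipschitz bound in hand does the lower bound $v\ge M>0$ follow, by comparing the second term $\frac{\pi}{2}\int dx/(v\sqrt{1+(v')^2})$ of~\eqref{eq:3.2} against the linear barrier $v_{\min}+K|x-x_{\min}|$. So the threshold $4\pi$ is used for the \emph{gradient} bound, not directly for non-pinching.

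Your proposal instead wants to exclude pinching directly from $W<4\pi$ by appealing to a Li--Yau or Gauss--Bonnet type principle. That is not what is used here, and it is not fleshed out: the Li--Yau inequality in its usual form concerns closed immersed surfaces (a point of multiplicity $k$ forces $W\ge 4\pi k$), whereas the surfaces of revolution here have boundary, so it does not apply off the shelf. Moreover, even granted a lower bound $u_k\ge c>0$, you would still lack a uniform gradient bound, and ``using the boundary conditions to control lower-order terms'' does not supply one: the weight $(1+(u_k')^2)^{-5/2}$ in $\int u_k(u_k'')^2(1+(u_k')^2)^{-5/2}\,dx$ degenerates as $|u_k'|\to\infty$, so a bound on this integral alone does not bound $\|u_k''\|_{L^2}$. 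The missing ingredient is precisely the elementary a priori estimate of Lemma~\ref{theorem:3.2}; the rest of your argument (subsequence extraction, passage of constraints to the limit, lower semicontinuity via convexity in $v''$ together with strong $C^1$ convergence) is standard and correctly described.
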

%%%%%%%%%%%%%%%%%%%%%%%%%%%%%%%%%%%%%%%%%%%%%%%%
We obtain the same regularity result of minimisers $u$ of $W$ in $N_\alpha(\psi)$ as above: $u\in W^{3,\infty}$, $u'''\in BV$, see Proposition~\ref{theorem:3.10}. 

We show in Proposition~\ref{theorem:3.4} that for $\alpha \to \infty$, only almost
constant functions $v(x)\approx \alpha$ satisfy  condition \eqref{eq:D}. This means that
Theorem~\ref{theorem:1.3} is interesting (only) for ``small'' $\alpha$, while for $\alpha \to \infty$ we should seek a condition different from (and weaker than)  \eqref{eq:D}. 
Indeed, we have: 
%%%%%%%%%%%%%%%%%%%%%%%%%%%%%%%%%%%%%%%%%%%%%%%%
\begin{theorem} \label{theorem:1.4}
Assume that $\psi$ satisfies condition~\eqref{eq:C} and is such that 
\begin{equation}\label{eq:1.6}
\inf_{v \in N_\alpha(\psi)} W(v) <\pi \max_{S\in [0,\alpha]}g_\alpha (S) \mbox{\ with\ }  g_\alpha (S):=(\alpha-S)G(S)^2 .
\end{equation}
Then there exists $u \in N_\alpha(\psi)$ such that 
\begin{align*}
W(u)= \inf_{v \in N_\alpha(\psi)} W(v). 
\end{align*}
\end{theorem}
%%%%%%%%%%%%%%%%%%%%%%%%%%%%%%%%%%%%%%%%%%%%%%%%
We discuss in Remark~\ref{theorem:3.5} that condition~\eqref{eq:1.6} is actually weaker than condition~\eqref{eq:D} for large $\alpha$ (beyond $\approx 6.1$). The occurrence of the function $G$ indicates that in this case 
 one-dimensional arguments come into play again.

We construct many examples belonging to the admissible sets $\mathcal{M}(\psi)$ and $N_\alpha(\psi)$ by employing particular solutions  of the elastica equation
and prototype Willmore surfaces of revolution (spheres and catenoids). 
These examples illustrate how obstacles may look like in order to obey condition \eqref{eq:B}, \eqref{eq:D} or \eqref{eq:1.6}, respectively.

The gradient flow for $W$ defined on graphs with a unilateral constraint 
and satisfying homogeneous Navier boudary conditions has also recently been studied in \cite{Mueller_2020,Mueller_2021,Okabe-Yoshizawa,Yoshizawa}. 
It would be  also interesting to investigate  gradient flows corresponding to problems \eqref{eq:1.2} and \eqref{eq:1.5}.   

This paper is organised as follows: 
Section~\ref{section:2} is devoted to the obstacle problem for Euler's elatica (or the one-dimentional Willmore) functional with Dirichlet boundary conditions. 
In Section~\ref{subsection:2.1} we introduce some notations and collect fundamental facts about the elastica equation.  
We prove Theorem~\ref{theorem:1.1} in Section~\ref{subsection:2.2} and the regularity of
minimisers in Section~\ref{subsection:2.3}. 
We prove Theorem~\ref{theorem:1.2} in Section~\ref{subsection:2.4}. 
In Section~\ref{subsection:2.5} we briefly study the non-symmetric case. 

Section~\ref{section:3} is concerned with the obstacle problem for the Willmore functional defined on surfaces of revolution, again with Dirichlet boundary conditions. 
In Section~\ref{subsection:3.1} we recall the basic existence and symmetry result for
``free'' minimisers (with no obstacle). 
In Section~\ref{subsection:3.2} we prove Theorems~\ref{theorem:1.3} and~\ref{theorem:1.4}
and present many instructive examples. 
Finally we prove the regularity of minimisers in Section~\ref{subsection:3.3}. 
%%%%%%%%%%%%%%%%%%%%%%%%%%%%%%%%%%%%%
%%%%%%%%%%%%%%%%%%%%%%%%%%%%%%%%%%%%%
\section{The one-dimensional Dirichlet obstacle problem} \label{section:2}

%%%%%%%%%%%%%%%%%%%%%%%%%%%%%%%%%%%%%
%%%%%%%%%%%%%%%%%%%%%%%%%%%%%%%%%%%%%
\subsection{Explicit solutions for the one-dimensional elastica equation}\label{subsection:2.1}

We first collect some facts concerning the elatica (or the one-dimensional Willmore)  equation
without obstacle which were already known to Euler \cite[pp. 231--297]{Euler} (see in particular 
pp. 233--234) and which will be relevant also in order to understand the shape of 
admissible obstacles in what follows. For a more convenient reference one may
also see \cite{DG07}.

For $u \in H^2_0(0,1)$, i.e. a sufficiently smooth function subject to homogeneous 
Dirichlet boundary conditions (horizontal clamping),
we recall the one-dimensional  elastica functional: 
$
W(u) = \int^1_0 \kappa(x)^2 \sqrt{1+u'(x)^2}\, dx . 
$ 
Its critical points 
satisfy the elastica (or one-dimensional Willmore) equation:
\begin{align} \label{eq:2.1}
\dfrac{1}{\sqrt{1 + u'(x)^2}} \dfrac{d}{dx} \left( \dfrac{\kappa'(x)}{\sqrt{1 + u'(x)^2}} \right) + \dfrac{1}{2} \kappa(x)^3 = 0, \quad x \in (0,1). 
\end{align} 
For the graph of $u : [0,1] \to \mathbb{R}$, its curvature is given by 
\begin{align*}
\kappa(x):= \kappa_u(x):= \dfrac{u''(x)}{(1+u'(x)^2)^{3/2}}. 
\end{align*}

Symmetric solutions to equation \eqref{eq:2.1} are known explicitly. Here the function $G$ defined in 
\eqref{eq:1.1.new} plays an important role. This function is smooth and strictly increasing and so is 
$G^{-1}: (-c_0/2,c_0/2)\to \mathbb{R}$.

%%%%%%%%%%%%%%%%%%%%%%%%%%%%%%%%%%%%%%%%%%%%%%%
\begin{lemma}{\,\,{\rm (\cite[Lemma 4]{DG07})}} \label{theorem:2.1}
Let $u \in C^4([0,1])$ be a function symmetric around $x=1/2$. 
Then $u$ solves the elastica equation {\rm \eqref{eq:2.1}} iff there exists $c \in (-c_0, c_0)$ such that 
\begin{align*}
u'(x) = G^{-1}\left( \dfrac{c}{2} - cx \right) \quad \text{in} \quad [0, 1]. 
\end{align*}
For the curvature, one has that 
\begin{align*}
\kappa(x) = - \dfrac{c}{\sqrt[4]{1 + G^{-1}\left( \tfrac{c}{2} - c x \right)^2}}. 
\end{align*}
Moreover, if we additionally assume that $u(0)=u(1)=0$, then one has 
\begin{align*}
u(x)= \dfrac{2}{c \sqrt[4]{1 + G^{-1}\left( \tfrac{c}{2} - c x \right)^2}} - \dfrac{2}{c \sqrt[4]{1 + G^{-1}\left( \tfrac{c}{2}\right)^2}}, 
\quad c \neq 0. 
\end{align*}
Finally, if one additionally assumes also that $u'(0)=u'(1)=0$, then one has
\begin{align*}
 u(x)\equiv 0.
\end{align*}
\end{lemma}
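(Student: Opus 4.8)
The plan is to analyse the explicit one-parameter family of symmetric solutions that has already been derived in the earlier parts of Lemma~\ref{theorem:2.1}, and to show that imposing the additional Neumann-type conditions $u'(0)=u'(1)=0$ forces the parameter $c$ to vanish, which in turn forces $u\equiv 0$. First I would recall that by the part of the lemma already established, any symmetric $C^4$-solution of \eqref{eq:2.1} satisfies $u'(x)=G^{-1}(\tfrac{c}{2}-cx)$ for some $c\in(-c_0,c_0)$. Evaluating at $x=0$ and using $u'(0)=0$ together with the fact that $G^{-1}(0)=0$ (since $G(0)=0$) gives $G^{-1}(\tfrac{c}{2})=0$, hence $\tfrac{c}{2}=0$ because $G^{-1}$ is injective on $(-c_0/2,c_0/2)$. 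Therefore $c=0$, so $u'(x)=G^{-1}(0)=0$ for all $x\in[0,1]$.

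Once $u'\equiv 0$ is known, the conclusion $u\equiv 0$ is immediate from the homogeneous Dirichlet condition $u(0)=u(1)=0$: a function with vanishing derivative is constant, and the constant must be $0$. (Alternatively one may simply plug $c=0$, interpreted as a limit, into the displayed formula for $u(x)$, but the cleaner route is to argue directly from $u'\equiv 0$ rather than dividing by $c$.) I would also note for completeness that the boundary condition at $x=1$, namely $u'(1)=G^{-1}(-\tfrac{c}{2})=0$, gives the same conclusion and is consistent; one of the two Neumann conditions already suffices.

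I do not expect any genuine obstacle here: the statement is essentially a corollary of the preceding parts of the lemma, and the only facts needed are the smoothness and strict monotonicity (hence injectivity) of $G$ and $G^{-1}$, which are recorded in the excerpt just before the lemma, together with $G(0)=0$. The one small point worth stating explicitly is why we may invoke the formula $u'(x)=G^{-1}(\tfrac{c}{2}-cx)$ at all under the stronger hypotheses: this is simply because the stronger hypotheses include all the hypotheses of the earlier assertion, so the representation is available and we are merely exploiting it further. Thus the whole proof is three short lines: derive $u'=G^{-1}(\tfrac{c}{2}-cx)$, evaluate at $x=0$ to get $c=0$, conclude $u'\equiv 0$ and hence $u\equiv 0$.
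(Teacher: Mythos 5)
The paper does not prove Lemma~\ref{theorem:2.1}; it is quoted directly from \cite[Lemma 4]{DG07} and used as a black box, so there is no in-paper proof to compare against. Your proposal addresses only the final assertion ($u'(0)=u'(1)=0\Rightarrow u\equiv 0$), explicitly taking the earlier parts (the characterisation $u'(x)=G^{-1}(c/2-cx)$, the curvature formula, and the explicit formula for $u$) for granted. Given that the paper itself only cites these facts, that is a defensible scope, but be aware that a full proof of the lemma as stated would require integrating the elastica equation as in \cite[Lemma 4]{DG07}; your proposal does not attempt that and says so.

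For the part you do prove, the argument is correct: from $u'(x)=G^{-1}\bigl(\tfrac{c}{2}-cx\bigr)$ and $u'(0)=0$ you get $G^{-1}(c/2)=0$; since $G(0)=0$ by definition and $G$ is strictly increasing (so $G^{-1}$ is injective with $G^{-1}(0)=0$), this forces $c=0$, hence $u'\equiv G^{-1}(0)=0$, and then $u(0)=0$ gives $u\equiv 0$. Your remark about avoiding division by $c$ and instead arguing from $u'\equiv 0$ is the right way to handle the degenerate case, and the observation that $u'(1)=G^{-1}(-c/2)=0$ gives the same conclusion is a good consistency check, though redundant.
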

One should observe that this result holds true \emph{only in the class of graphs of smooth functions}.
The graph of the function $\hat{u}_{c_0}$ as displayed in Figure~\ref{figure:1} is 
a nontrivial solution to the Dirichlet problem for the elastica equation
under homogeneous boundary conditions, but:
\begin{enumerate}
\item[$\bullet$] although the curve is smooth and a graph, due to  $\hat{u}'_{c_0}(1/4)=-\hat{u}'_{c_0}(3/4)=\infty$, 
       it is \emph{not a smooth graph solution},
\item[$\bullet$] it is not minimising the elastica functional.
\end{enumerate}

%%%%%%%%%%%%%%%%%%%%%%%%%%%%%%%%%%%%%
%%%%%%%%%%%%%%%%%%%%%%%%%%%%%%%%%%%%%
\subsection{The obstacle problem in the symmetric case} \label{subsection:2.2}

In this section we only consider symmetric obstacles $\psi$ satisfying condition \eqref{eq:A}. 
Thanks to this condition the  set $\mathcal{M}(\psi)$ of  admissible functions, which is defined in \eqref{eq:1.2.a}, is not empty. 
One should observe that unlike Section~\ref{section:3} we consider functions \emph{above} the given obstacle.
We assume further  that $\psi$ satisfies condition \eqref{eq:B}, i.e., 
\begin{equation*} 
\inf_{v \in \mathcal{M}(\psi)} W(v) < 4c_0^2. 
\end{equation*}

%%%%%%%%%%%%%%%%%%%%%%%%%%%%%%%
\begin{lemma} \label{theorem:2.2}
Assume that $v \in \mathcal{M}(\psi)$ satisfies $W(v) \le 4 c_1^2$ for some $c_1\in [0,c_0)$. 
Then 
\begin{align*}
\max_{x \in [0, 1]} |v'(x)| \le G^{-1}\left( \dfrac{c_1}{2} \right) < \infty. 
\end{align*}
\end{lemma}
%%%%%%%%%%%%%%%%%%%%%%%%%%%%%%%
\begin{proof}
Let $x_{\rm max} \in (0,1)$ be such that 
\begin{align*}
v'(x_{\rm max}) = \max_{x \in [0, 1]}| v'(x) |. 
\end{align*}
Since $v \in \mathcal{M}(\psi)$ is symmetric, we have $v'(x_{\rm max}) = - v'(1-x_{\rm max})$. 
To begin with, we consider the case where $x_{\rm max} \in (0, 1/2)$. 
By H\"older's inequality, we have 
\begin{align*}
& 4 G(v'(x_{\rm max})) \\
&= G(v'(x_{\rm max})) + \{ G(v'(x_{\rm max})) - G(v'(1-x_{\rm max}))\} - G(v'(1-x_{\rm max})) \\
&= \int^{v'(x_{\rm max})}_{0} \dfrac{d \tau}{(1+ \tau^2)^{\frac{5}{4}}} +  \int^{v'(x_{\rm max})}_{v'(1-x_{\rm max})} \dfrac{d \tau}{(1+ \tau^2)^{\frac{5}{4}}} 
       + \int^{0}_{v'(1-x_{\rm max})} \dfrac{d \tau}{(1+ \tau^2)^{\frac{5}{4}}} \\
&= \int^{x_{\rm max}}_{0} \dfrac{v''(x)}{(1 + v'(x)^2)^{\frac{5}{4}}} \, dx 
     + \int^{1-x_{\rm max}}_{x_{\rm max}} \dfrac{- v''(x)}{(1 + v'(x)^2)^{\frac{5}{4}}} \, dx 
     + \int^{1}_{1-x_{\rm max}} \dfrac{v''(x)}{(1 + v'(x)^2)^{\frac{5}{4}}} \, dx \\
& \le \int^1_0 |\kappa_v(x)| (1 + v'(x)^2)^{1/4} \, dx 
  \le W(v)^{1/2} \le 2 c_1,   
\end{align*}
i.e., 
\begin{align*}
G(v'(x_{\rm max})) \le \dfrac{c_1}{2}. 
\end{align*}
Then it follows from the monotonicity of $G$ that 
\begin{align} \label{eq:2.2}
v'(x_{\rm max}) \le G^{-1}\left( \dfrac{c_1}{2} \right) < \infty. 
\end{align}
We turn to the case where $x_{\rm max} \in (1/2, 1)$. 
Considering $-v$ instead of $v$ and $1-x_{\rm max}$  instead of $x_{\rm max}$, we observe  as in the first case that \eqref{eq:2.2} holds. 
The proof is complete. 
\end{proof}

The preceding estimate is the key to prove our first existence result. 

%%%%%%%%%%%%%%%%%%%%%%%%%%%%%%%%%%%%%
%\begin{theorem} %label{theorem:2.3}%
%Assume that $\psi \in H^2(0,1)$ satisfies Conditions \eqref{eq:A} and \eqref{eq:B}.
%Then there exists $u \in \mathcal{M}(\psi)$ such that 
%\begin{align*}
%W(u) = \inf_{v \in \mathcal{M}(\psi)} W(v). 
%\end{align*}
%\end{theorem}
%%%%%%%%%%%%%%%%%%%%%%%%%%%%%%%%%%%%%
%Theorem~\ref{theorem:2.15} below shows that some kind of smallness
%condition is necessary in order to have such kind of existence result.
%We have to leave open whether Condition~\eqref{eq:B} is optimal. 
%%%%%%%%%%%%%%%%%%%%%%%%%%%%%%%%%%%%%
\begin{proof}[Proof of Theorem \ref{theorem:1.1}]
Let $\{ u_j \} \subset \mathcal{M}(\psi)$ be a minimising sequence, i.e., 
\begin{align*}
\lim_{j \to \infty} W(u_j) =  \inf_{v \in \mathcal{M}(\psi)} W(v). 
\end{align*} 
Thanks to condition \eqref{eq:B} we find a constant 
 $c_1 \in [0, c_0)$ and a subsequence so that we have  
\begin{align} \label{eq:2.3}
W(u_j) \le 4 c_1^2 \quad \text{for any} \quad j \in \mathbb{N},  
\end{align}
where for brevity $\{u_j \}$ denotes also the subsequence. 
Then, thanks to Lemma \ref{theorem:2.2}, we obtain 
\begin{align*}
|u_j(x)| + |u'_j(x)| \le \int^{\tfrac{1}{2}}_0 |u'_j(\xi)|\, d\xi + |u'_j(x)| 
 \le \dfrac{3}{2} G^{-1}\left( \dfrac{c_1}{2} \right)
\end{align*}
for any $x \in [0, 1]$. Thus $\{ u_j \}$ is bounded in $C^1(0,1)$. 
This together with \eqref{eq:2.3} implies that $\{ u_j \}$ is also bounded in $H^2(0,1)$. 
Indeed, we have 
\begin{equation*}
4 c_1^2 \ge W(u_j) \ge \dfrac{1}{(1+ G^{-1}(c_1/2)^2)^{5/2}}\int^1_0 u''(x)^2 \, dx. 
\end{equation*}
Thus we find $u \in H^2_0(0,1)$ such that 
\begin{align} \label{eq:2.4}
u_j \rightharpoonup u \quad \text{weakly in} \quad H^2_0(0,1), 
\end{align}
up to a subsequence. Since the embedding $H^2_0(0,1) \subset C^{1, \gamma}(0,1)$ is compact for any $\gamma \in (0, 1/2)$, 
 we have in particular that
\begin{align} \label{eq:2.5}
u_j \to u \quad \text{in} \quad C^1(0,1). 
\end{align}
Then it follows from \eqref{eq:2.4} and \eqref{eq:2.5} that 
\begin{align*}
\liminf_{j \to \infty} W(u_j) \ge W(u). 
\end{align*}
Recalling that \eqref{eq:2.5} yields $u \in \mathcal{M}(\psi)$, we see that $u$ is a  minimiser of $W$. 
The proof is complete. 
\end{proof} 

\begin{remark}\label{theorem:2.3}
{\rm\ 
Since $\mathcal{M}(\psi)\subset H^2_0 (0,1)$ is convex, one finds as usual 
that any minimiser $u\in \mathcal{M}(\psi)$ of $W(\cdot)$ satisfies the variational 
inequality:
\begin{equation*} 
\qquad W'(u)(v-u)\ge 0 \quad \text{for all} \quad  v\in  \mathcal{M}(\psi).
\end{equation*}
We find from \cite[Lemma 2]{DG07} that
\begin{equation}\label{eq:2.6}
W'(u)(\varphi)= 
 2\int^1_0 \frac{\kappa (x)}{1+u'(x)^2}\varphi''(x)\, dx
 -5\int^1_0 \frac{\kappa (x)^2 u'(x)}{\sqrt{1+u'(x)^2} }\varphi'(x)\, dx 
\end{equation}
for all $\varphi \in H^2_0(0,1)$}.
\end{remark}

\bigskip 
\noindent
In order to construct admissible obstacles we first recall the scaling behaviour of the one-dimensional elastica functional.
For $v : [0,1] \to \mathbb{R}$ and $\rho>0$, we consider $v_\rho : [0, \rho] \to \mathbb{R}$ defined by $v_\rho(x):= \rho v (x/\rho)$. 
Then it holds that 
\begin{align*}
v'_\rho(x) = v'\left(\dfrac{x}{\rho}\right), \quad v''_\rho(x) = \dfrac{1}{\rho} v''\left(\dfrac{x}{\rho}\right), 
\end{align*}
and then 
\begin{align*}
\kappa_\rho(x)= \dfrac{v''_\rho(x)}{(1+v'_\rho(x)^2)^{3/2}} 
               = \dfrac{1}{\rho} \dfrac{v''(\frac{x}{\rho})}{(1+v'(\frac{x}{\rho})^2)^{3/2}} 
               = \dfrac{1}{\rho} \kappa\left(\dfrac{x}{\rho}\right). 
\end{align*}
Hence we have 
\begin{align*}
W (v_\rho|_{[0, \rho]}) &= \int^\rho_0 \kappa_\rho(x)^2 \sqrt{1 + v'_\rho(x)^2} \, dx \\
                      &= \dfrac{1}{\rho^2} \int^\rho_0 \kappa\left(\dfrac{x}{\rho}\right)^2 \sqrt{1 + v'\left(\dfrac{x}{\rho}\right)^2} \, dx
                       = \dfrac{1}{\rho} W(v). 
\end{align*}
This scaling behaviour of the one-dimensional elastica functional applies to any curve and is not restricted to graphs.

%%%%%%%%%%%%%%%%%%%%%%%%%%%%%%%%%%%%%%%%%%%%%%%
The idea is now in order to find admissible obstacles  to glue different pieces of explicit solutions together and to rescale them.
%%%%%%%%%%%%%%%%%%%%%%%%%%%%%%%%%%%%%%%%%%%%%%%
\begin{remark}\label{theorem:2.4}{\rm \ 
For each $c \in (0, c_0)$, we  let $u_c : [0,1] \to \mathbb{R}$ denote the solution of \eqref{eq:2.1} obtained by Lemma \ref{theorem:2.1}. 
Thanks to Lemma \ref{theorem:2.1}, we see that $W(u_c) = c^2$. 
Here we oddly extend $u_c$ to  $U_c \in H^2( [-1/2, 3/2] , \mathbb{R})$ as follows: 
\begin{align*}
U_c(x) := 
\begin{cases}
-u_c(-x) & \quad x \in [-\frac{1}{2}, 0], \\
u_c(x), & \quad x \in [0,1], \\
-u_c(2-x) & \quad x \in [1, \frac{3}{2}]. 
\end{cases}
\end{align*}
One should observe that only for $c=c_0$ this extension yields a solution to \eqref{eq:2.1}
also for $x$ around $0$ and $1$.

Since $U_c$ is an odd extension of $u_c$, it is clear that $W(U_c) = 2 c^2$. 
By means of a scaling and translation, we define $\hat{u}_c : [0, 1] \to \mathbb{R}$ as follows: 
\begin{align*}
\hat{u}_c(x):= \dfrac{1}{2}U_c(-\tfrac{1}{2} + 2x) - \dfrac{1}{2} U_c(-\tfrac{1}{2}). 
\end{align*}
Recalling  the scaling property of $W$, we observe that 
\begin{align*}
W(\hat{u}_c) = 2 W(U_c) = 4 c^2. 
\end{align*}
Thus, for any $c \in (0, c_0)$, we find that $W(\hat{u}_c)< 4 c_0^2$. 
This implies that for any sufficiently small $\varepsilon >0$, any $\psi \in C^0([0,1])$ with $\psi \le \hat{u}_c-\varepsilon $ obeys conditions~\eqref{eq:A} and~\eqref{eq:B}. 
In particular, any $\hat{u}_c-\varepsilon$  with $c\in (0, c_0)$ is itself an admissible obstacle.

\begin{figure}[h] 
\centering 
\includegraphics[width=.45\textwidth]{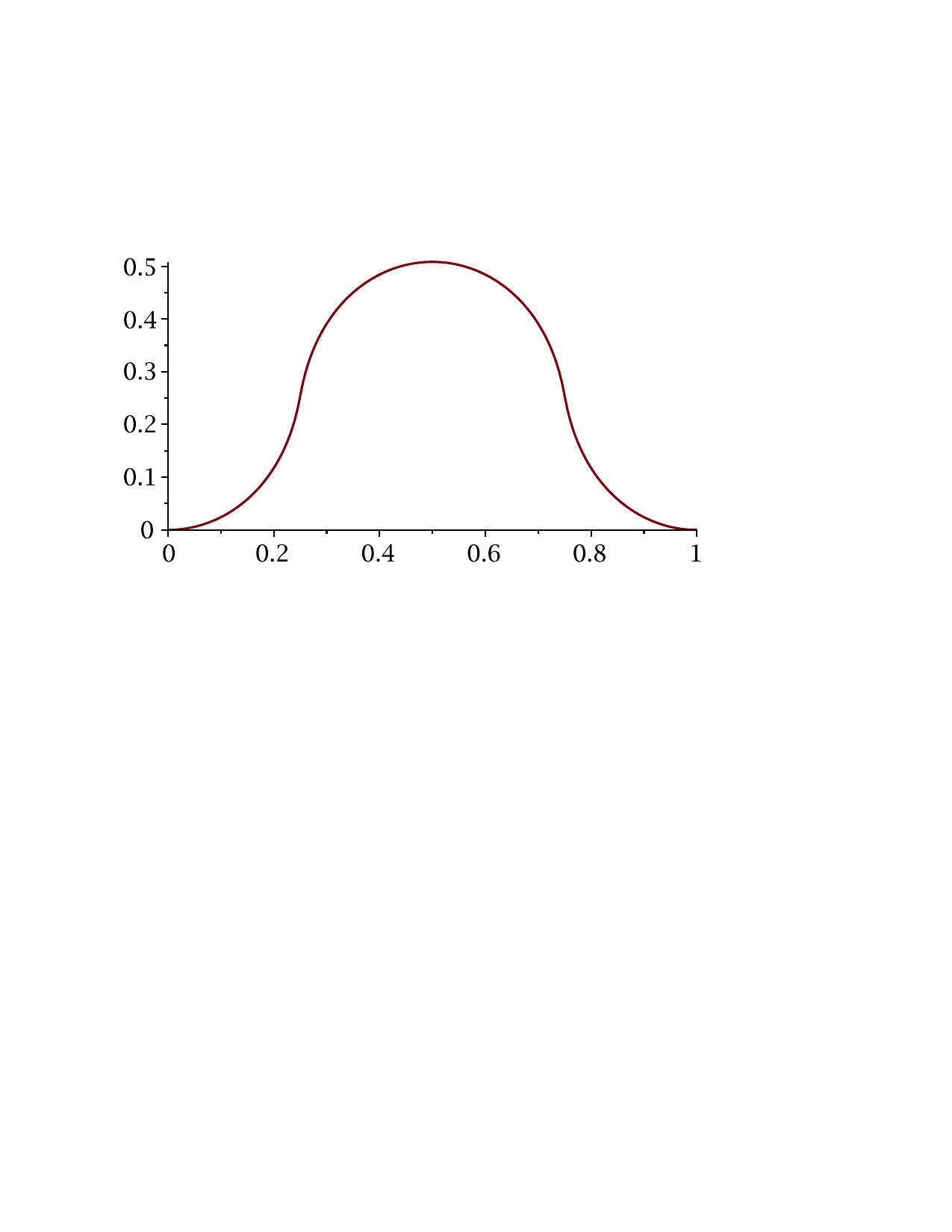}\hspace{0.3cm}
\includegraphics[width=.45\textwidth]{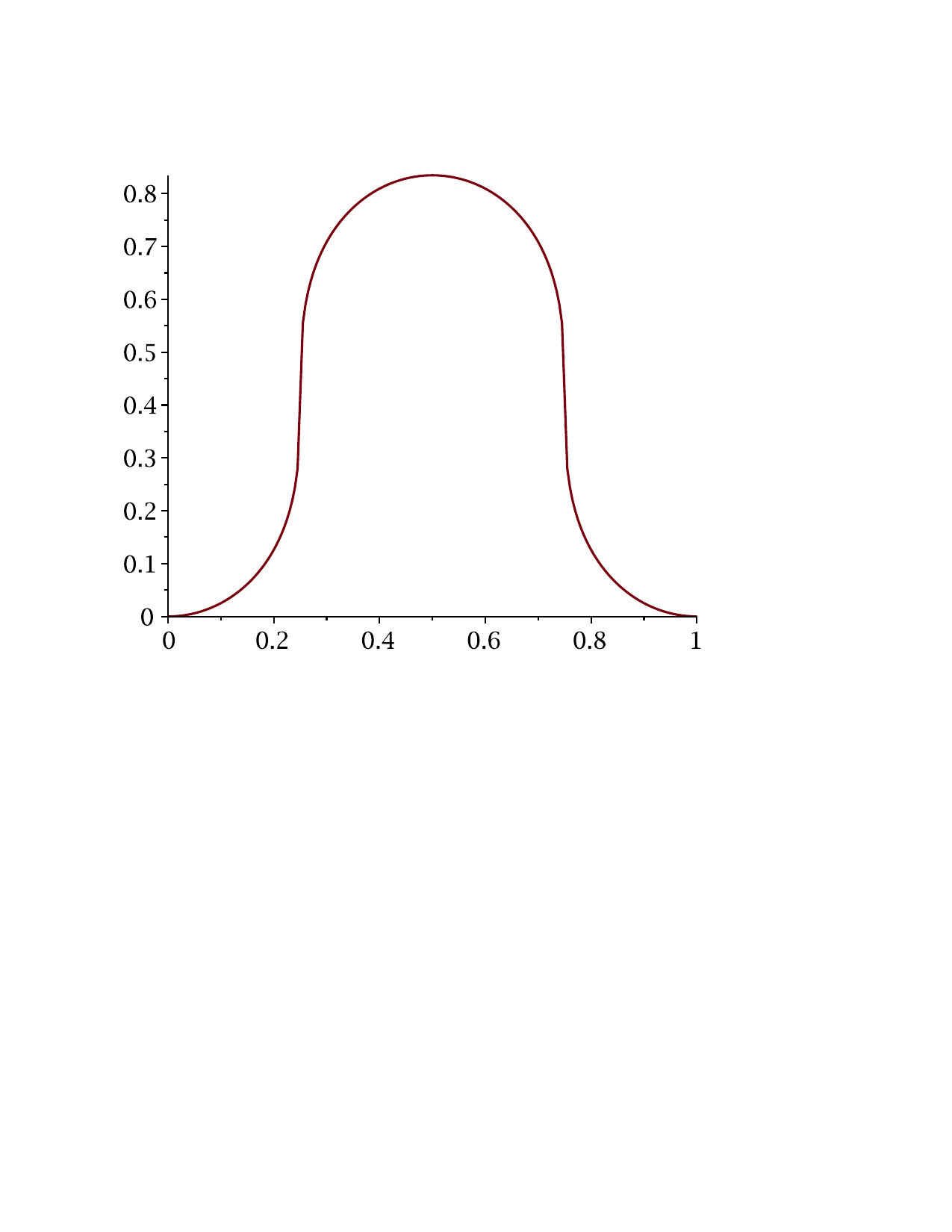}\vspace{-4cm}
\caption{$\hat{u}_{2.3}$  (left) and $\hat{u}_{c_0}$  (right)}
\label{figure:1} 
\end{figure} 

One may observe that
$$
\max_{x\in[0,1]} \hat{u}_{c_0}=\hat{u}_{c_0} (1/2)
= \dfrac{2}{c_0} 
=0.8346268418\ldots.
$$

We consider only $c>0$ in order to obtain obstacles with $\psi(x_0)>0$, i.e. satisfying condition \eqref{eq:A}.
}
\end{remark}

%%%%%%%%%%%%%%%%%%%%%%%%%%%%%%%%%%%%%%%%%%%%%%%%%%%%%%%%%%%%%%%%%%%%%%
%%%%%%%%%%%%%%%%%%%%%%%%%%%%%%%%%%%%%%%%%%%%%%%%%%%%%%%%%%%%%%%%%%%%%%
\subsection{Regularity of symmetric minimisers} \label{subsection:2.3}

In order to show regularity we follow the reasoning of Dall'Acqua and Deckelnick, one may see \cite[Proof of Theorem~5.1]{DD}.

Let $u \in \mathcal{M}(\psi)$ be a minimiser of $W(\cdot)$. 
Recalling Remark~\ref{theorem:2.3} we notice first  that $u$ is a weak supersolution of the elastica equation. 
%%%%%%%%%%%%%%%%%%%%%%%%%%%%%%%%%%%%%%%Change: New position of this Corollary.
\begin{corollary}\label{theorem:2.9}
	For all $\varphi\in H^2_0(0,1)$ with $\varphi\ge 0$ we have that
	\begin{equation*}
	W'(u)(\varphi)\ge 0.
	\end{equation*}	
\end{corollary}	
%%%%%%%%%%%%%%%%%%%%%%%%%%%%%%%%%%%%%%%
\begin{proof}
	Take $u+\varphi$ as comparison function in  Remark~\ref{theorem:2.3}.	
\end{proof}
This corollary shows  that $W'(u)$ is a nonnegative distribution on $C^\infty_{\rm c}(0,1)$. 
This nonnegativity yields that $W'(u)$ is even a  distribution on $C^0_{\rm c}(0,1)$.
Thus, by the Riesz representation theorem we find a nonnegative Radon measure $\mu$ such that 
\begin{equation} \label{eq:2.7}
W'(u)(\varphi)= \int^1_0 \varphi \, d \mu
\end{equation}
for all $\varphi \in C^\infty_{\rm c}(0,1)$. See \cite[Lemma 37.2]{Tartar}.
We define $\mathcal{N} \subset (0,1)$ by 
\begin{equation*}
\mathcal{N} := \{ x \in (0,1) \mid u(x) > \psi(x) \}. 
\end{equation*}
Since $u$ and $\psi$ are continuous in $[0,1]$, we see that the set $\mathcal{N}$ is an open, and 
\begin{equation}\label{eq:important_property}
\mu(\mathcal{N})=0. 
\end{equation}
In other words, the restriction $u|_\mathcal{N}$ is even a \emph{solution} of the 
elastica equation.
%Change: This shall make clear what we really need to prove regularity.
%This means that in particular close to the boundary points
%we need that $u$ is a solution!!

Indeed, for all $\varphi \in C^\infty_{\rm c}(\mathcal{N})$, we have $u + \varepsilon \varphi \ge \psi$ in $[0,1]$ for $\varepsilon>0$ small enough, 
and then $W'(u)(\varphi)=0$. This implies that $u|_{\mathcal{N}}$ is even a weak \emph{solution}
of the elastica equation.
%Attention please: This is essential to prove Lemma~\ref{theorem:2.6} below!!

%%%%%%%%%%%%%%%%%%%%%%%%%%%%%%%%%%%%%%%%%%%%%
\begin{lemma} \label{theorem:2.5}
Assume that $\psi$ satisfies condition~\eqref{eq:A}. Suppose that there exists a minimiser $u \in \mathcal{M}(\psi)$  of $W(\cdot)$. 
Then there exist $a \in (0,1/2)$ such that $(0,a) \cup (1-a,1) \subset \mathcal{N}$. 
\end{lemma}
%%%%%%%%%%%%%%%%%%%%%%%%%%%%%%%%%%%%%%%%%%%%%
\begin{proof}
Since $\psi$ is continuous in $[0,1]$, there exists $\delta>0$ such that 
\begin{equation*}
\psi(x) < \dfrac{3}{4} \psi(0) \quad \text{for all} \quad x \in [0, \delta). 
\end{equation*}
By Sobolev's embedding $H^2(0,1) \subset W^{1, \infty}(0,1)$ and since $\mathcal{M}(\psi)
\subset H^2(0,1)$, we find a constant $C_1>0$ such that  
\begin{equation*}
\| u \|_{W^{1, \infty}(0,1)} \le C \| u \|_{H^2(0,1)} \le C_1. 
\end{equation*}
This together with $u(0) =0$ implies that  
\begin{equation*}
|u(x)| \le C_1 x \quad \text{for all} \quad x \in [0,1]. 
\end{equation*}
Thus, taking $\delta'>0$ such that $\delta'< -\psi(0)/4 C_1$, we see that 
\begin{align*}
u(x) \ge \dfrac{1}{4} \psi(0) \quad \text{for all} \quad x \in [0, \delta']. 
\end{align*}
Setting $a:= \min\{ \delta, \delta'\}$, we deduce that $(0,a) \subset \mathcal{N}$. 
The symmetry of $u$ yields  that also $(1-a,1) \subset \mathcal{N}$. 
The proof of Lemma \ref{theorem:2.5} is complete.  
\end{proof}

We next show that the nonnegative Radon measure $\mu$ is finite.
%%%%%%%%%%%%%%%%%%%%%%%%%%%%%%%%%
\begin{lemma} \label{theorem:2.6}
Assume that $\psi$ satisfies condition~\eqref{eq:A}.  
Suppose that there exists a minimiser $u \in \mathcal{M}(\psi)$  of $W(\cdot)$. 
Then
\begin{equation*}
\mu(0,1) <\infty.
\end{equation*}
\end{lemma}
%%%%%%%%%%%%%%%%%%%%%%%%%%%%%%%%%
\begin{proof}
Thanks to Lemma \ref{theorem:2.5} and \eqref{eq:important_property}, we find $a\in (0,1/2)$ such that $\mu((0,1)) = \mu([a,1-a])$. 
Fix $\eta \in C^\infty_{\rm c}(0,1)$ with $\eta \equiv 1$ in $[a,1-a]$ and $0 \le \eta \le 1$ in $(0,1)$. Since by assumption $u \in \mathcal{M}(\psi)\subset H^2(0,1)\hookrightarrow
C^1 ([0,1])$  we conclude from \eqref{eq:2.6}:
\begin{align*}
\mu((0,1)) &=\mu([a,1-a])\le \int^1_0 \eta(x) \, d\mu 
                  = W'(u)(\eta) \\
               & \le C (\| u''\|_{L^2(0,1)} \| \eta'' \|_{L^2(0,1)} + \| u''\|^2_{L^2(0,1)} \| \eta' \|_{L^\infty(0,1)}) \\
               & \le C ( \| \eta'' \|_{L^2(0,1)} + \| \eta' \|_{L^\infty(0,1)}). 
\end{align*}
This proves the claim. 
\end{proof}

In order to study the regularity of minimisers, we employ  ideas used in \cite[Proposition 3.2]{DD} and \cite[Theorem 3.9]{DDG}. 
%%%%%%%%%%%%%%%%%%%%%%%%%%%%%%%%%%%%%%%%%%%%%
\begin{lemma} \label{theorem:2.7}
Fix $\eta \in C^\infty_{\rm c}(0,1)$ and set 
\begin{align*}
\varphi_1(x) &:= \int^x_0 \!\! \int^y_0 \eta(s) \, ds dy + \alpha x^2 + \beta x^3, \\
\varphi_2(x) &:= \int^x_0 \eta(y) \, dy + (-3x^2 + 2 x^3) \int^1_0 \eta(y) \, dy, 
\end{align*}
for $x \in [0,1]$, where 
\begin{align*}
\alpha := \int^1_0 \eta(y) \, dy - 3 \int^1_0 \!\! \int^y_0 \eta(s) \, ds dy, \qquad 
\beta := -\alpha - \int^1_0 \!\! \int^y_0 \eta(s) \, ds dy.
\end{align*}
Then, $\varphi_1, \varphi_2 \in H^2_0(0,1)$ and there exists $C>0$ such that 
\begin{gather*}
\| \varphi_1 \|_{C^1(0,1)}, |\alpha|, |\beta| \le C \| \eta \|_{L^1(0,1)}, \\
\| \varphi_2 \|_{L^\infty(0,1)} \le C \| \eta\|_{L^1(0,1)}, \qquad \| \varphi'_2 \|_{L^p(0,1)} \le C \| \eta \|_{L^p(0,1)} \quad \text{for} \quad p \in [1, \infty). 
\end{gather*}
\end{lemma}
%%%%%%%%%%%%%%%%%%%%%%%%%%%%%%%%%%%%%%%%%%%%%

%%%%%%%%%%%%%%%%%%%%%%%%%%%%%%%%%%%%%%%%%%%%%
\begin{proposition} \label{theorem:2.8}
Assume that $\psi$ satisfies condition~\eqref{eq:A}. 
Suppose that there exists a minimiser $u \in \mathcal{M}(\psi)$  of $W(\cdot)$. 
Then $u\in C^2([0,1])$, $u''$ is weakly differentiable and $u'''\in BV(0,1)$. 
On the complement of the coincidence set the minimiser is smooth, i.e. $u|_{\mathcal N}\in C^\infty({\mathcal N})$.
\end{proposition}
%%%%%%%%%%%%%%%%%%%%%%%%%%%%%%%%%%%%%%%%%%%%%
\begin{proof}
We define a function $m : (0,1) \to \mathbb{R}$ by 
\begin{equation*}
m(x) = \mu(0,x) \quad \text{for} \quad x \in (0,1). 
\end{equation*}
Then $m$ is increasing and  bounded  on $(0,1)$, and  the Lebesgue--Stieltjes integral induced by $m$ is well-defined. 
Using integration by parts for Lebesgue--Stieltjes integrals (\cite[Chapter III, Theorem 14.1]{Saks}), we obtain 
\begin{equation} \label{eq:2.8}
\int^1_0 \varphi \, d \mu(x) = - \int ^1_0 m(x) \varphi'(x) \, dx 
\end{equation}
for all $\varphi \in C^\infty_{\rm c}(0,1)$. 
It follows from \eqref{eq:2.6}, \eqref{eq:2.7} and \eqref{eq:2.8} that 
\begin{equation}
\label{eq:2.9}
2\int^1_0 \frac{\kappa (x)}{1+u'(x)^2}\varphi''(x)\, dx
= 5\int^1_0 \frac{\kappa (x)^2 u'(x)}{\sqrt{1+u'(x)^2} }\varphi'(x)\, dx  - \int ^1_0 m(x) \varphi'(x) \, dx  
\end{equation}
for all $\varphi \in C^\infty_{\rm c}(0,1)$. 
By a density argument, we see that \eqref{eq:2.9} also holds for all $\varphi \in H^2_0(0,1)$. 

Fix $\eta \in C^\infty_{\rm c}(0,1)$ arbitrarily. 
Taking $\varphi_1$ as $\varphi$ in \eqref{eq:2.9}, where $\varphi_1$ is defined in Lemma~\ref{theorem:2.7}, we have 
\begin{equation}
\label{eq:2.10}
\begin{aligned}
2 \int^1_0 \dfrac{u''(x)}{(1+u'(x)^2)^{5/2}} \eta(x) \, dx 
&= -4 \int^1_0 \dfrac{u''(x)}{(1+u'(x)^2)^{5/2}} (\alpha + 3 \beta x) \, dx \\
& \qquad + 5 \int^1_0 \dfrac{|u''(x)|^2 u'(x)}{(1+u'(x)^2)^{7/2}} \varphi'_1(x) \, dx \\
& \qquad \quad  - \int ^1_0 m(x) \varphi'_1(x) \, dx 
=: I_1 + I_2 + I_3. 
\end{aligned}
\end{equation}
Since by assumption $u \in \mathcal{M}(\psi)\subset H^2(0,1)\hookrightarrow
C^1 ([0,1])$, we observe from Lemmas~\ref{theorem:2.6} and~\ref{theorem:2.7} that  
\begin{align*}
|I_1| & \le (4 |\alpha| + 12|\beta|) \| u'' \|_{L^1(0,1)} \le C \| \eta\|_{L^1(0,1)}, \\
|I_2| & \le 5 \| u'' \|^2_{L^2(0,1)} \|\varphi_1 \|_{C^1([0,1])} \le C \| \eta \|_{L^1(0,1)}, \\
|I_3| & \le \sup_{x \in [0,1]} m(x) \| \varphi_1 \|_{C^1([0,1])} \le C \mu(0,1) \| \eta \|_{L^1(0,1)} \le C \| \eta \|_{L^1(0,1)}. 
\end{align*}
This together with \eqref{eq:2.10} implies that 
\begin{equation}
\label{eq:2.11}
\| u''(x) (1+u'(x)^2)^{-5/2} \|_{L^\infty(0,1)} \le C. 
\end{equation}
In view of $u \in C^1 ([0,1])$  we obtain from \eqref{eq:2.11}:
\begin{equation}
\label{eq:2.12}
\| u'' \|_{L^\infty(0,1)} \le C. 
\end{equation}

Fix $\eta \in C^\infty_{\rm c}(0,1)$ arbitrarily. 
Taking $\varphi_2$ as $\varphi$ in \eqref{eq:2.9}, where $\varphi_2$ is defined by Lemma \ref{theorem:2.7}, we have 
\begin{align*}
\int^1_0 \dfrac{u''(x)}{(1+u'(x)^2)^{5/2}} \eta'(x) \, dx 
 &= 6 \left(\int^1_0 (1-2x) \dfrac{u''(x)}{(1+u'(x)^2)^{5/2}} \, dx\right)\left( \int^1_0 \eta(x) \, dx\right) \\
 & \qquad + \dfrac{5}{2} \int^1_0 \dfrac{u''(x)^2 u'(x) \varphi'_2(x)}{(1+u'(x)^2)^{7/2}} \, dx \\
 & \qquad \quad - \dfrac{1}{2} \int^1_0 m(x) \varphi'_2(x)\, dx
  =: I'_1 + I'_2  + I'_3. 
\end{align*}
We deduce from  \eqref{eq:2.12} and  $u \in  H^2(0,1)\hookrightarrow
C^1 ([0,1])$ that 
\begin{align*}
|I'_2| \le C \| u''\|_{L^\infty(0,1)}^2 \| \varphi'_2\|_{L^1(0,1)} 
       \le C \| \eta \|_{L^1(0,1)}. 
\end{align*}
Along the same lines as above, we obtain 
\begin{align*}
|I'_1| &\le 6 \| u''\|_{L^\infty(0,1)} \| \eta\|_{L^1(0,1)} \le C \| \eta\|_{L^1(0,1)}, \\
|I'_3| &\le \dfrac{1}{2}\Bigl( \sup_{x \in [0,1]} m(x) \Bigr)\, \| \varphi'_2\|_{L^1(0,1)} \le C \mu(0,1) \|\eta\|_{L^1(0,1)} \le C \|\eta\|_{L^1(0,1)}.  
\end{align*}
Thus we observe that 
\begin{equation*}
\Bigl| \int^1_0 \dfrac{u''(x)}{(1+u'(x)^2)^{5/2}} \eta'(x) \, dx \Bigr| \le C \| \eta\|_{L^1(0,1)} 
\end{equation*}
for all $\eta \in C^\infty_{\rm c}(0,1)$, and then 
\begin{equation*}
\| (u'' (1 + (u')^2)^{-5/2})' \|_{L^{\infty}(0,1)} \le C. 
\end{equation*}
This together with  $u \in C^1 ([0,1])$ and \eqref{eq:2.12} implies that 
\begin{equation}
\label{eq:2.13}
\| u'''\|_{L^{\infty}(0,1)} <\infty. 
\end{equation}
We observe from \eqref{eq:2.9} and \eqref{eq:2.13} that 
for all  $\varphi \in C^\infty_{\rm c}(0,1)$ we have
\begin{align*}
\int^1_0 \Bigl[ -2 \Bigl(\frac{u''(x)}{(1+u'(x)^2)^{5/2}}\Bigr)' - 5
 \frac{u''(x)^2 u'(x)}{(1+u'(x)^2)^{7/2}} +m(x) \Bigr] \varphi'(x)\, dx 
 =0.  
\end{align*}
Thus there exists a constant $c \in \mathbb{R}$ such that 
\begin{align*}
-2 \Bigl(\frac{u''(x)}{(1+u'(x)^2)^{5/2}}\Bigr)' - 5 \frac{u''(x)^2
	 u'(x)}{(1+u'(x)^2)^{7/2}} + m(x) = c \quad \text{for all} \quad x \in (0,1). 
\end{align*}
Recalling that $m(\cdot)$ is of bounded variation, we see that $u''' \in BV(0,1)$. 

Finally, $u|_{\mathcal N}$ is a solution of the elastica equation \eqref{eq:2.1} on the complement $\mathcal N$
of the coincidence set. Further boot strapping yields $u|_{\mathcal N}\in C^\infty({\mathcal N})$.
The proof of Proposition~\ref{theorem:2.8} is complete. 
\end{proof}

\bigskip

%%%%%%%%%%%%%%%%%%%%%%%%%%%%%%%%%%%%%
%%%%%%%%%%%%%%%%%%%%%%%%%%%%%%%%%%%%%
\subsection{Further properties of minimisers in the symmetric case} \label{subsection:2.4}

We deduce some properties of solutions of the obstacle problem as constructed in the previous section. 
That means that in what follows we consider 
\begin{equation*}
u \in \mathcal{M}(\psi):\quad 
W(u) = \inf_{v \in \mathcal{M}(\psi)} W(v). 
\end{equation*}

We recall that according to condition~\eqref{eq:A} we always have that 
\begin{equation*}
\exists x_0\in (0,1):\quad \psi (x_0) >0. 
\end{equation*}
So, for sure, the solution $u$ is strictly positive somewhere. 
In what follows we shall find out much more. 
Among others we prove Theorem~\ref{theorem:1.2} which implies  that for obstacles exceeding a specific universal bound our minimisation problem has \emph{no solution}.

One should always have in mind that the following results are valid due to the fact that we are looking for minimisers in the class of ``relatively smooth'' symmetric
\emph{graphs}.
In this class existence results are more restrictive while we have more and stronger results concerning qualitative properties of minimisers. 
We expect that this situation will change fundamentally when admitting all ``sufficiently smooth'' \emph{curves}. 
We emphasise that smooth curves, which are graphs, need not be smooth graphs.
As for this one should always have the function $\hat{u}_{c_0}$ in mind, which is defined in Remark~\ref{theorem:2.4}.

%%%%%%%%%%%%%%%%%%%%%%%%%%%%%%%%%%%%%%%%%%%%

%%%%%%%%%%%%%%%%%%%%%%%%%%%%%%%%%%%%%%%
We recall from Corollary~\ref{theorem:2.9} that the minimiser $u$ under consideration is a weak supersolution of the elastica equation, i.e. 
	for all $\varphi\in H^2_0(0,1)$ with $\varphi\ge 0$ we have that
$
W'(u)(\varphi)\ge 0.
$
Beside the regularity result in Proposition~\ref{theorem:2.8} only this is exploited in what follows.

We  associate with  $u$ the following corresponding auxiliary function:
\begin{equation*}
V(x):= V_u(x):=\kappa_u(x) (1+u'(x)^2)^{1/4}=\frac{u''(x)}{(1+u'(x)^2)^{5/4}}.
\end{equation*}
This turned to out to be extremely useful in \cite{DG07}, see e.g. Lemma 3 therein. 
Obviously, already Euler \cite{Euler} was aware of the importance of this function. 
Similarly as in \cite{DG07} we see that this function is a weak subsolution of a second order differential equation in divergence form:

%%%%%%%%%%%%%%%%%%%%%%%%%%%%%%%%%%%%%%%%%%%%%%
\begin{proposition}\label{theorem:2.10}
We have for all $\varphi\in H^2_0(0,1)$ with $\varphi\ge 0$ that 
\begin{equation}\label{eq:2.14}
0\ge  \int^1_0 \varphi'(x) \, \left(\frac{V'(x)}{(1+u'(x)^2)^{5/4}} \right)\, dx.
\end{equation}
Moreover, one has
\begin{equation}\label{eq:2.15}
0\ge  \int^1_0 \left( \varphi(x) u'(x)\right)' \, \left(\frac{u'(x) V'(x)}{(1+u'(x)^2)^{5/4}} -\frac12 V(x)^2\right)\, dx.
\end{equation}
\end{proposition}
That the right hand side of (\ref{eq:2.14}) is the weak form of the elastica operator was already observed by M\"uller \cite[Proposition 2.3]{Mueller_2019}.
%%%%%%%%%%%%%%%%%%%%%%%%%%%%%%%%%%%%%%%%%%%%%%
\begin{proof}
We conclude from  Remark~\ref{theorem:2.3}, Corollary~\ref{theorem:2.9}, and Proposition~\ref{theorem:2.8}  that the following holds for all such $\varphi$:	
$$
0\ge \int^1_0 \varphi'(x) \left( 
2 \left( \frac{\kappa (x)}{1+u'(x)^2}\right)'
+5 \frac{\kappa (x)^2 u'(x)}{\sqrt{1+u'(x)^2} }\right) \, dx.
$$
Observing that
\begin{align*}
2 \left( \frac{\kappa (x)}{1+u'(x)^2}\right)'
&=2 \left( \frac{V (x)}{(1+u'(x)^2)^{5/4}}\right)'
=2  \frac{V' (x)}{(1+u'(x)^2)^{5/4}}
  -5 \frac{V (x)u'(x)u''(x)}{(1+u'(x)^2)^{9/4}}\\
 & =2  \frac{V' (x)}{(1+u'(x)^2)^{5/4}}
  -5 \frac{V (x)^2u'(x)}{1+u'(x)^2}
\end{align*}
we find
$$
0\ge 2 \int^1_0 \varphi'(x)\frac{V' (x)}{(1+u'(x)^2)^{5/4}}\, dx.
$$
This shows the first claim. In order to see the second one we use $\varphi (u')^2$ as a test function and observe that 
$\left( \varphi (u')^2\right)' = u'\left( \varphi u'\right)'
+\varphi \, u'\, u'' $:
\begin{align*}
0& \ge  \int^1_0 \left( \varphi (u')^2\right)'(x) \, \left(\frac{V'(x)}{(1+u'(x)^2)^{5/4}} \right)\, dx\\
&=  \int^1_0 \left( \varphi u'\right)'(x) \, \left(\frac{u'(x) V'(x)}{(1+u'(x)^2)^{5/4}} \right)\, dx
+\int^1_0 \varphi (x) u'(x) V(x) V'(x)\, dx\\
&= \int^1_0 \left( \varphi u'\right)'(x) \, \left(\frac{u'(x) V'(x)}{(1+u'(x)^2)^{5/4}} \right)\, dx
+\int^1_0 \varphi (x) u'(x)\left(\frac{V^2}{2} \right)'(x)\, dx,
\end{align*}
and also the second claim \eqref{eq:2.15} follows.
\end{proof}

\medskip 
The previous proposition reflects on the one hand the well known fact that the elastica operator as a whole is of divergence form.
On the other hand this shows that $V$ is a subsolution of a second order differential inequality without zeroth-order term and hence obeys a strong Hopf-type maximum principle.

%%%%%%%%%%%%%%%%%%%%%%%%%%%%%%%%%%%%%%%%%%%%%%
\begin{proposition}\label{theorem:2.11}
The auxiliary function $V$ satisfies $V'(x)<0$ on $[0,1/2)$ and $V'(x)>0$ on $(1/2,1]$ and obeys
$$
V(0)=V(1)>0,\qquad V(\frac12) <0.
$$
There exists a point $a\in (0,1/2)$ such that $V>0$ (and hence $u$ is strictly convex) on $[0,a)\cup (1-a,1]$ and $V<0$ (and hence $u$ is strictly concave) on $(a,1-a)$. 
Moreover we have 
\begin{equation}\label{eq:2.16.0}
\forall x\in (0,\frac12):\quad u'(x)>0,\qquad 
\forall x\in (\frac12,1):\quad u'(x)<0;
\end{equation}
\begin{equation}\label{eq:2.16}
\forall x\in [0,\frac12):\quad \kappa'(x)<0,\qquad 
\forall x\in (\frac12,1]:\quad \kappa'(x)>0.
\end{equation}
Finally we have
$$
\forall x\in (\frac12,1):\quad
V(1/2)^2 \le 
 \left(V(x)^2-2\frac{u'(x) V'(x)}{(1+u'(x)^2)^{5/4}} \right)
 \le V(1)^2.
$$
In particular
$$
0<-\kappa (1/2) \le \kappa (1).
$$
\end{proposition}
%%%%%%%%%%%%%%%%%%%%%%%%%%%%%%%%%%%%%%%%%%%%%%%

\begin{proof}
Since $V$ obeys  a strong Hopf-type maximum principle and is symmetric around $1/2$ we have
$$
\forall x\in [0,1/2):\quad V'(x)<0,\qquad \forall x\in (1/2,1]:\quad V'(x)>0.
$$
We observe further that by assumption $u$ must be strictly positive somewhere, so
we find a global and local maximum in some $x_0\in(0,1)$. In view of the boundary conditions we find then points
$x_1'\in (0,x_0)$, $x_1\in (x_0,1)$ with $0<u(x_1')<u(x_0)$ and $0<u(x_1)<u(x_0)$. Hence there exist
$x_2'\in (x_1',x_0)$, $x_2\in (x_0,x_1)$ with $0<u(x_2')$, $0<u(x_2)$, $0>u''(x_2')$, and $0>u''(x_2)$.
Using once more the boundary conditions $u(0)=u(1)=u'(0)=u'(1)=0$ we find 
$x_3'\in (0,x_2')$, $x_3\in (x_2,1)$ with $0<u''(x_3')$, and $0<u''(x_3 )$.

This proves that $V$ is strictly positive somewhere as well as strictly negative somewhere else. Hence we find 
$a\in (0,1/2)$ such that $V>0$  on $[0,a)\cup (1-a,1]$ and $V<0$  on $(a,1-a)$. This implies that $u$ is strictly convex
on $[0,a)\cup (1-a,1]$  and   strictly concave on $(a,1-a)$. Since $u''>0 $  on $[0,a)\cup (1-a,1]$, $u''<0$  on $(a,1-a)$, and $u'(0)=u'(1/2)=u'(1)=0$, \eqref{eq:2.16.0} follows.

To prove \eqref{eq:2.16} it suffices to consider $x\in (1/2,1]$
where $u'(x)<0$, because $\kappa$ is symmetric around $1/2$. Here we find from our first observation for $x\in (1/2,1]$:
\begin{align*}
0&<V'(x) = \left( \kappa(x) (1+u'(x)^2)^{1/4}\right)'\\
  & =\kappa'(x) (1+u'(x)^2)^{1/4} +\frac12 \kappa(x) (1+u'(x)^2)^{-3/4}u'(x) u''(x)\\
   &=\kappa'(x) (1+u'(x)^2)^{1/4} +\frac12 \kappa(x)^2 (1+u'(x)^2)^{3/4}u'(x)\\
   &\le \kappa'(x) (1+u'(x)^2)^{1/4},
\end{align*}
and we obtain \eqref{eq:2.16}. In view of 
(\ref{eq:2.15}) the last results follow.
\end{proof}

\bigskip

Although higher order equations do in general not obey any kind of comparison principle, 
here we can show that symmetric supersolutions lie above symmetric solutions which obey the same Dirichlet boundary conditions.

%%%%%%%%%%%%%%%%%%%%%%%%%%%%%%%%%%%
\begin{lemma}\label{theorem:2.12}
Assume that $u\in C^2([0,1])\cap W^{3,\infty}(0,1)$ is symmetric around $1/2$, satisfies for some $\beta \in \mathbb{R}$ Dirichlet boundary conditions
$$
u(0)=u(1)=0,\qquad u'(0)=-u'(1)=\beta
$$
and is a supersolution of the elastica equation in the  sense that we have
for all $\varphi\in H^2_0(0,1)$ with $\varphi\ge 0$ that 
\begin{equation*}
0\ge  \int^1_0 \varphi'(x) \, \left(\frac{V'(x)}{(1+u'(x)^2)^{5/4}} \right)\, dx.
\end{equation*}
Let $w\in C^4([0,1])$ be a solution of the elastica equation
$$
\dfrac{1}{\sqrt{1 + w'(x)^2}} \dfrac{d}{dx} \left( \dfrac{\kappa_w'(x)}{\sqrt{1 + w'(x)^2}} \right) + \dfrac{1}{2} \kappa_w(x)^3 = 0, \quad x \in (0,1).
$$
which is symmetric around $1/2$ and satisfies the same Dirichlet boundary conditions
$$
w(0)=w(1)=0,\qquad w'(0)=-w'(1)=\beta.
$$
Then the following comparison statement holds:
$$
\forall x\in (0,1):\qquad u(x)\ge w(x).
$$
Moreover we either have equality or  strict inequality everywhere.
\end{lemma}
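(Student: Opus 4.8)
\textbf{Proof proposal for Lemma \ref{theorem:2.12}.}

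The plan is to compare the two functions through their slopes, exploiting the fact that on symmetric functions the elastica equation can be integrated once to yield the explicit formula from Lemma \ref{theorem:2.1} for the genuine solution $w$, while the supersolution $u$ satisfies only a differential \emph{inequality}. First I would reduce everything to the auxiliary quantity. For the solution $w$, Lemma \ref{theorem:2.1} gives $w'(x)=G^{-1}(c/2-cx)$ for some $c\in(-c_0,c_0)$, where $c$ is determined by $\beta = w'(0)=G^{-1}(c/2)$, equivalently $G(\beta)=c/2$; in particular $V_w(x) = \kappa_w(x)(1+w'(x)^2)^{1/4}$ satisfies $V_w(x) = -c$ identically (this is essentially the statement that $G(w'(x))$ is affine, i.e.\ $\frac{d}{dx}G(w'(x)) = V_w(x)$ is constant). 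So $w$ is the symmetric function with constant $V_w \equiv -c = -2G(\beta)$ and the prescribed boundary data. For $u$, by Proposition \ref{theorem:2.11} (applicable since $u$ is a symmetric supersolution — one must check the hypothesis $u>0$ somewhere is \emph{not} needed here, only that $u$ satisfies the weak inequality, so I would instead argue directly from the displayed inequality in the hypothesis) the function $V_u$ is a weak subsolution of $\big(V'/(1+u'^2)^{5/4}\big)' \ge 0$ in divergence form; testing against suitable $\varphi$ shows that $x\mapsto G(u'(x))$ is \emph{concave} on $(0,1)$ (since its second derivative in the weak sense is $V_u'/(1+u'^2)^{5/4}\cdot$(positive factor), wait — more precisely $\frac{d}{dx}G(u'(x))=V_u(x)$ and the inequality says $V_u'$ times a positive weight is a nonpositive distribution, so $V_u$ is non-increasing, hence $G(u'(x))$ is concave).

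The key step is then: both $g_u(x):=G(u'(x))$ and $g_w(x):=G(w'(x))$ are functions on $[0,1]$ with $g_u(0)=g_w(0)=G(\beta)$ and, by the symmetry $u'(x)=-u'(1-x)$, $w'(x)=-w'(1-x)$, also $g_u(1)=g_w(1)=-G(\beta)$ and $g_u(1/2)=g_w(1/2)=0$. Moreover $g_w$ is affine (namely $g_w(x)=G(\beta)(1-2x)$) and $g_u$ is concave. A concave function agreeing with an affine function at two points lies above it in between; agreeing at $x=0$ and $x=1/2$ forces $g_u\ge g_w$ on $[0,1/2]$, and by symmetry (both $g_u$ and $g_w$ are odd about $(1/2,0)$) the reverse $g_u\le g_w$ on $[1/2,1]$. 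Applying the strictly increasing $G^{-1}$ gives $u'(x)\ge w'(x)$ on $[0,1/2]$ and $u'(x)\le w'(x)$ on $[1/2,1]$. Integrating from $0$: $u(x)-w(x)=\int_0^x (u'-w')\ge 0$ on $[0,1/2]$; integrating from $1$ backwards and using $u(1)=w(1)=0$: $u(x)-w(x)=-\int_x^1(u'-w')\ge 0$ on $[1/2,1]$ as well. This yields $u\ge w$ on $(0,1)$.

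For the dichotomy, I would invoke the strong maximum principle for $V_u$: if $u\not\equiv w$ then $g_u\not\equiv g_w$ (if $u'\equiv w'$ then $u\equiv w$ by the boundary condition), so the concave function $g_u$ strictly exceeds the chord $g_w$ at some interior point of $[0,1/2]$; concavity then forces strict inequality $g_u>g_w$ on all of $(0,1/2)$, hence $u'>w'$ on $(0,1/2)$, hence $u>w$ on $(0,1/2]$ by strict monotonicity of the integral, and by symmetry on all of $(0,1)$. The main obstacle I anticipate is the regularity bookkeeping needed to justify ``$V_u$ non-increasing $\Longleftrightarrow$ $G(u'(\cdot))$ concave'': since $u$ is only $C^2\cap W^{3,\infty}$, $V_u$ is Lipschitz and $V_u'\in L^\infty$, so the weak inequality $\int \varphi' \frac{V_u'}{(1+u'^2)^{5/4}}\le 0$ for all $\varphi\ge 0$ in $H^2_0$ must be unwound carefully — one should pick $\varphi$ with $\varphi'=-\phi$ for $\phi\ge0$ a bump, concluding that the \emph{distributional} derivative of $V_u$ is a nonpositive measure, whence $V_u$ is non-increasing in the usual pointwise (a.e.) sense, which is exactly concavity of its antiderivative $G(u'(\cdot))$. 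Everything else is elementary calculus with the explicit formula for $w$ and the symmetry relations.
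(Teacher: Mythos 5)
Your overall strategy — compare the functions $g_u(x):=G(u'(x))$ and $g_w(x):=G(w'(x))$, note that $g_w$ is affine, and try to show $g_u$ is concave on $[0,1/2]$ so that it lies above the chord — is a perfectly reasonable alternative to the paper's argument (which uses the strong maximum principle for $V_u$ together with the integral identity $\int_0^{1/2}V_u\,dx = -G(\beta)$ and a case analysis). However, your key derivation is wrong: the weak inequality
\[
0\ \ge\ \int_0^1 \varphi'(x)\,\frac{V'(x)}{(1+u'(x)^2)^{5/4}}\,dx
\quad\text{for all } \varphi\in H^2_0(0,1),\ \varphi\ge 0
\]
does \emph{not} say that $h:=V'/(1+u'^2)^{5/4}$ is a nonpositive distribution; it is the weak form of a \emph{second-order} divergence inequality $h'\ge 0$. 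Indeed, integrating by parts (using $\varphi(0)=\varphi(1)=0$) gives $\langle h',\varphi\rangle\ge 0$ for all nonnegative test functions, i.e.\ $h$ is non-\emph{decreasing} — consistent with $V_u$ being a \emph{subsolution} that attains its maximum at the boundary, not a monotone function. Your proposed test function $\varphi$ with $\varphi'=-\phi$ for $\phi\ge 0$ a bump does not exist in the admissible class: it forces $\varphi(x)=-\int_0^x\phi\le 0$, violating $\varphi\ge 0$. As stated, the conclusion that $V_u$ is non-increasing on all of $(0,1)$ is simply false (cf.\ Proposition~\ref{theorem:2.11}, where $V'<0$ on $[0,1/2)$ and $V'>0$ on $(1/2,1]$); worse, concavity of $g_u$ on all of $(0,1)$ together with $g_u=g_w$ at $0,1/2,1$ and the odd symmetry about $(1/2,0)$ would force $g_u\equiv g_w$, contradicting the possibility of strict inequality in the statement.

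The gap is repairable within your framework, and the repair makes visible the step you skipped: since $u$ is symmetric about $1/2$, one has $V_u(x)=V_u(1-x)$, hence $h(x)=-h(1-x)$ a.e.; a non-decreasing odd-about-$1/2$ function must satisfy $h\le 0$ on $(0,1/2)$ and $h\ge 0$ on $(1/2,1)$. Only \emph{then} do you get $V_u'\le 0$ on $(0,1/2)$, i.e.\ $g_u$ concave on $(0,1/2)$, which is exactly what your chord argument needs. With that correction your route goes through and is arguably more streamlined than the paper's (no case analysis, no separate proof that $V(0)\ge c$), but you must replace ``the inequality says $V_u'$ times a positive weight is a nonpositive distribution'' by the correct ``the inequality says the weighted derivative $h$ is non-decreasing, which combined with the symmetry $h(1/2\pm)=\mp h(1/2\mp)$ yields $h\le 0$ on $(0,1/2)$.''
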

%%%%%%%%%%%%%%%%%%%%%%%%%%%%%%%%%%%%
\begin{proof} 
We define 
$$c:=-2G(\beta).$$ 
According to \cite[Theorem 2]{DG07} the solution $w$ is unique in the class of symmetric smooth graphs and satisfies
\begin{equation*}
w'(x)=G^{-1} \left(c(x-1/2)\right),
\end{equation*}
cf. also Lemma~\ref{theorem:2.1}.
We let $V=V_u=\frac{u''}{(1+(u')^2)^{5/4}}$ denote the usual auxiliary function and show first that
\begin{equation}\label{eq:2.17}
V(0)\ge c
\end{equation}
and assume by contradiction that 
$$
V(0)<c.
$$
We only used that $u$ is a sufficiently smooth supersolution to show Proposition~\ref{theorem:2.10}. 
Hence the strong maximum principle applies to $V$ and yields that
$$
\forall x\in [0,1]: V(x) < c.
$$
Observe that the symmetry of $u$ yields that $u'(1/2)=0$. 
Integration yields
\begin{align*}
c&>2\int^{1/2}_0 \frac{u''(\xi)}{(1+u'(\xi)^2)^{5/4}}\, d\xi
=2\int^{u'(1/2)=0}_{u'(0)=\beta}\frac{1}{(1+\tau^2)^{5/4}}\, d\tau \\
&=2G(0)-2G(\beta)=c,
\end{align*}
a contradiction. 
This proves (\ref{eq:2.17}).

In case that $V(x)\equiv c$ integration as in \cite[Theorem 2]{DG07} shows that $u(x)\equiv w(x)$. 
Otherwise we have thanks to the strict maximum principle that either 
$$
\forall x\in (0,1)\setminus\{1/2\}:\qquad V(x)>c.
$$
or
$$
\exists x_0\in [0,1/2): \quad 
\forall x\in (0,x_0)\cup (1-x_0):\quad V(x)>c
$$
and
$$
\forall x\in (x_0,1-x_0):\quad V(x)<c.
$$
In view of the first part of the proof it follows that $x_0>0$.
In the first situation we conclude for all $x\in (0,1/2]$ that
\begin{align*}
cx&<\int^{x}_0 \frac{u''(\xi)}{(1+u'(\xi)^2)^{5/4}}\, d\xi
=\int^{u'(x)}_{\beta}\frac{1}{(1+\tau^2)^{5/4}}\, d\tau 
=G(u'(x))+\frac{c}{2},\\
\Rightarrow\ & w'(x) < u'(x).
\end{align*}
In particular, $0=w'(1/2) < u'(1/2)=0$, which is impossible.
So we are left with the second situation where we obtain in the same way as before that
$$
\forall x\in (0,x_0):\quad w'(x) < u'(x).
$$
For $x\in (x_0,1/2)$ we integrate instead on $(x,1/2)$ and find thanks to the symmetry of $u$:
\begin{align*}
c\left(1/2-x\right)&>\int_{x}^{1/2} \frac{u''(\xi)}{(1+u'(\xi)^2)^{5/4}}\, d\xi
=\int_{u'(x)}^{u'(1/2)=0}\frac{1}{(1+\tau^2)^{5/4}}\, d\tau 
=-G(u'(x)),\\
\Rightarrow\ & w'(x) < u'(x).
\end{align*}
So we conclude that
$$
\forall x\in (0,1/2)\setminus \{x_0\}:\quad  w'(x) < u'(x).
$$
Hence for any $x\in (0,1)$ it holds that $u(x)>w(x)$ except when $V(x)\equiv c$ and hence $u(x)\equiv w(x)$.
\end{proof}

%%%%%%%%%%%%%%%%%%%%%%%%%%%%%%%%%%%
\begin{remark}\label{theorem:2.13}
	{\rm \
    Since $\beta$ was arbitrary in the previous lemma and multiplication
    by $-1$ changes a subsolution into a supersolution, an analogous statement
    holds also for subsolutions.
    }
\end{remark}
%%%%%%%%%%%%%%%%%%%%%%%%%%%%%%%%%%%

\bigskip 
\noindent
For what follows and up to the end of this subsection we fix the following notation: 
$u\in C^2([0,1])\cap W^{3,\infty}(0,1)$ with $u'''\in BV(0,1)$ always denotes a supersolution (which is not a solution) of the elastica equation which is symmetric around $1/2$ 
and satisfies homogeneous Dirichlet boundary conditions
$$
u(0)=u(1)=0,\qquad u'(0)=-u'(1)=0.
$$
We call the  reparameterisation by arclength of the graph of $u$
$$
y:[-L,L]\to [0,1]\times \mathbb{R}, \quad L=\frac12 \int^1_0 \sqrt{1+u'(\xi)^2}\, d\xi
$$
so that 
$$
\forall x\in [0,1]:\quad y\left(-L+ \int^x_0 \sqrt{1+u'(\xi)^2}\, d\xi\right)=(x,u(x)).
$$
We let $\kappa:[-L,L]\to  \mathbb{R}$ denote its curvature function in arclength parameterisation,
$$
\tau, n:[-L,L]\to  \mathbb{R}^2,\quad \tau (s):= y_s(s),\quad 
n(s):=\begin{pmatrix} -\tau^2\\ \tau^1 \end{pmatrix}
$$
its unit tangent and normal, respectively.
Being a supersolution of the elastica equation means that
\begin{equation*}
\kappa_{ss} +\frac12\kappa^3 \ge 0.
\end{equation*}
According to Proposition~\ref{theorem:2.11} we know that
\begin{equation}\label{eq:2.18}
\forall s\in (0,L] :\quad \kappa_s(s) >0.
\end{equation}

We shall compare $u$ with a suitable solution $w$ of the elastica equation, which is symmetric around $1/2$ and which will be specified later.
One should have in mind that such a solution is smooth as a curve, it is a graph, but it may have infinite slope and is hence there not a smooth graph. 
At points where $w$ has infinite slope the corresponding auxiliary function $V_w$ may  have even jump discontinuouties.

In any case we shall assume in $x=1/2$,  that $w$ has its maximum and its curvature its minimum there.
Let $Y$ denote the arclength parameterisation of its graph (which is smooth) with $Y(0)=(1/2, w(1/2))$ and $k,T,N$ denote its curvature function, unit tangent and normal field, respectively.
%%%%%%%%%%%%%%%%%%%%%%%%%%%%%%%%%%%%%%%%%%%%
\begin{lemma}\label{theorem:2.14}
In the situation just described we assume further  that 
$$
0> \kappa(0)=k(0).
$$
Then
\begin{equation*}
\forall s\in [0,L]:\quad \kappa(s)\ge k(s).
\end{equation*}
\end{lemma}
%%%%%%%%%%%%%%%%%%%%%%%%%%%%%%%%%%%%%%%%%%%%
\begin{proof}
We assume first for some  positive $\varepsilon$ close to zero that 
$$
 \kappa_{ss} +\frac12\kappa^3 \ge \varepsilon > 0
$$
and prove that then even
\begin{equation}\label{eq:2.19}
\forall s\in (0,L]:\quad \kappa(s) > k(s).
\end{equation}	
Since $\kappa$ is continuous  we first see for $s$ close enough to $0$ that
$$
\kappa_{ss}(s) \ge  \varepsilon-\frac12\kappa(s)^3 \ge \frac34 \varepsilon-\frac12\kappa(0)^3 
= \frac34 \varepsilon-\frac12 k(0)^3
\ge \frac12 \varepsilon-\frac12 k(s)^3 = \frac12 \varepsilon + k_{ss}(s).
$$ 
This differential inequality has to be understood in a weak sense. Since $\kappa(\,.\,) $ and $k(\,.\,) $
are even functions we find from the weak maximum principle that \eqref{eq:2.19} is satisfied for small positive $s$. 
We assume by contradiction that this is not the case on the whole interval and  choose $s_0\in (0,L]$ minimal with
$$
\kappa(s_0) = k(s_0), \mbox{\ so that\ } \forall s\in (0,s_0):\quad \kappa(s) > k(s).
$$
This implies that 
\begin{equation}\label{eq:2.20}
0< \kappa_s (s_0) \le k_s (s_0).
\end{equation}
We obtain from the differential inequality and equation, respectively, and from 
\eqref{eq:2.18} that for all $s\in (0,L]$:
$$
\frac{d}{ds}\left(4\kappa_s^2+\kappa^4 \right)=8\kappa_s \left(\kappa_{ss} +\frac12\kappa^3  \right) > 0
=\frac{d}{ds}\left(4 k_s^2+ k^4 \right).
$$
Integrating over $[0,s_0]$ yields
$$
\left[4\kappa_s^2+\kappa^4 \right]^{s_0}_0
> \left[4 k_s^2+ k^4 \right]^{s_0}_0.
$$
Making use  of $\kappa_s(0)=k_s(0)=0$, of $\kappa(s_0)=k(s_0)$, of $\kappa(0)=k(0)$ and of \eqref{eq:2.20} we find
$$
4\kappa_s(s_0)^2 > 4 k_s(s_0)^2
\ge 4\kappa_s(s_0)^2,
$$
a contradiction.

In the general case that  we apply  first \eqref{eq:2.19} to $\kappa +\frac\varepsilon2 s^2$ and let then $\varepsilon\searrow 0$. 
\end{proof}

\bigskip 
\noindent 
\begin{proof}[Proof of Theorem~\ref{theorem:1.2}]
We introduce angle functions
$$
\alpha: [-L,L] \to \left( -\frac{\pi}{2},\frac{\pi}{2}\right),\qquad 
A: [-L,L] \to \left[ -\frac{\pi}{2},\frac{\pi}{2}\right]
$$
such that 
\begin{equation*}
\tau(s)=\begin{pmatrix}
\cos (\alpha(s)) \\
\sin (\alpha (s) )
\end{pmatrix},\qquad 
T(s)=\begin{pmatrix}
\cos (A(s)) \\
\sin (A(s) )
\end{pmatrix}.
\end{equation*}
We take from planar differential geometry that
$$
\forall s\in [-L,L]:\quad \kappa (s)=\dot{\alpha} (s),\quad 
k(s)=\dot{A} (s).
$$
Since $\alpha(0)=A(0)=0$, Lemma~\ref{theorem:2.14} yields that 
\begin{equation*}
\forall s\in [0,L]:\quad \alpha(s) \ge A(s), \quad \alpha(s) \in \Big(-\frac{\pi}{2},0\Big]. 
\end{equation*}
The last claim follows from Proposition~\ref{theorem:2.11}.
This means that on $[0,L]$ both angle functions take values only  in $[-\pi/2,0]$. On this interval  $\cos$ as well as $\sin$ are monotonically increasing so that 
we have componentwise ordering of the tangent vectors:
$$
\forall s\in [0,L]:\quad
\tau (s) \ge T(s), \mbox{\ meaning that for\ }j=1,2:
\quad \tau^j (s) \ge T^j(s).
$$
We specify the solution $w$ such that $w(1/2)=u(1/2)$ and that in arclength reparameterisation $k(s=0)=\kappa(s=0)$ so that in particular $Y(0)=y(0)$. Such a comparison function exists in view 
of \cite{DG07} and the scaling behaviour of the curvature.
We come up with 
\begin{equation*}
\forall s\in [0,L]:\quad y(s) \ge Y(s)
\end{equation*}
to be interpreted compenentwise as before. 
This means that the supersolution lies above and right from the corresponding solution with $k(0)=\kappa(0)$ and  $Y(0)=y(0)$.

This means that the average slope of the supersolution is less than the average slope of the solution, but restricted to $[0,L]$:
$$
2u(1/2)=-\frac{u(1)- u(1/2)}{1-1/2} = - \frac{y^2(L)-y^2(0)}{y^1(L)-y^1(0)}\le -\frac{Y^2(L) -Y^2(0)}{Y^1(L)-Y^1(0)}.
$$
In order to find a bound for the maximal average slope of symmetric solutions 
(which is scaling invariant) one has to maximise (cf. Remark~\ref{theorem:2.4} and Lemma~\ref{theorem:2.1})
$$
[0,1/2] \ni x \mapsto \frac{U_{c_0}(x)-U_{c_0}(-1/2)}{x+1/2}=
\frac{
	\dfrac{2}{c_0 \sqrt[4]{1 + G^{-1}\left( \tfrac{c_0}{2} - c_0 x \right)^2}} + \dfrac{2}{c_0 }
}{
x+\frac12
}.
$$
By substituting $z:=G^{-1}\left( \tfrac{c_0}{2} - c_0 x \right)$
this is equivalent to maximising
$$
[0,\infty] \ni z \mapsto \frac{
2+2(1+z^2)^{-1/4}
}{
	c_0-G(z)
}.
$$
This maximum is given by the unique solution $z_0\in (0,\infty)$ of the equation 
\begin{equation}\label{eq:2.16.a.3}
z_0(c_0-G(z_0))=2+2(1+z_0^2)^{-1/4}.
\end{equation}
We think that this equation cannot be solved explicitly.
With the help of maple\texttrademark\ or mathematica\texttrademark\  one finds that
$z_0 = 2.3780929080\ldots$ is the  solution and so, a bound for this function. 
Hence, \emph{any supersolution} of the elastica equation as considered in this section obeys the \emph{universal bound}:
\begin{equation}\label{eq:2.21}
\forall x\in [0,1]:\quad 
0\le u(x) \le u\left(\frac12\right)\le \frac{z_0}{2}=1.1890464540\ldots.
\end{equation}
In view of Proposition~\ref{theorem:2.8} and Corollary~\ref{theorem:2.9} this yields the proof of Theorem~\ref{theorem:1.2}.
\end{proof}

%%%%%%%%%%%%%%%%%%%%%%%%%%%%%%%%%%%%%%%%

In view of \cite[Theorem~1.1]{Mueller_2019} and Remark~\ref{theorem:2.17} below
one may have expected that the optimal bound  in \eqref{eq:1.3} in Theorem~\ref{theorem:1.2} would be $2/c_0=0.8346268418\ldots$.
However, the following remark shows that the bound \eqref{eq:2.21} can
presumably not be improved, at least not for supersolutions in the class of ``sufficiently smooth'' graphs.

%%%%%%%%%%%%%%%%%%%%%%%%%%%%%%%%%%%%%%%%
\begin{remark}\label{theorem:2.16}
	We consider a solution $Y(\cdot)$ as before, parameterised by arclength
	on $[-s_0,s_0]$. The point  $s_0$ is chosen such that $Y(0)-Y(s_0)$ has in modulus
	the maximal slope $-2.3780929080\ldots$ mentioned before. Beyond $s_0$ we extend this curve by the solution of the initial value problem
	\begin{equation*}
	\left\{
	\begin{array}{l}
	\tilde{k}_{ss}+\frac12 \tilde k =0, \quad s\ge s_0\\
	\tilde{Y} (s_0)= Y(s_0), \quad \tilde{Y}_s (s_0)= Y_s(s_0), \quad 
	\tilde{k} (s_0)= k(s_0), \quad \tilde{k}_s (s_0)>> k_s(s_0).
	\end{array}
	\right. 
	\end{equation*}
	Choosing $\tilde{k}_s (s_0)$ arbitrarily large yields an extension 
	$\tilde Y$ which becomes horizontal in $s_1>s_0$ arbitrarily close to 
	$s_0$. We consider this composed symmetric curve on $[-s_1,s_1]$ and 
	rescale and translate it such that it satisfies Dirichlet conditions over
	$[0,1]$. Since the derivative of its curvature jumps upwards in $\{\pm s_0\}$
	we find a smooth enough supersolution of the elastica equation. 
	As a curve it has the regularity as in Proposition~\ref{theorem:2.8} and it is 
	a graph. However, one should observe that it is not even  $C^1$ as a graph. 
	
	So, formally this example does not show the optimality of the bound  \eqref{eq:2.21} for ``smooth'' graphical supersolutions. 
	However, we are confident that  approximating  the delta distributions on the right hand side of the elastica equation by smooth functions would show the optimality also rigorously.
\end{remark}
%%%%%%%%%%%%%%%%%%%%%%%%%%%%%%%%%%%%%%%%%
%%%% Here comes the new Remark 2.16 %%%%
%%%%%%%%%%%%%%%%%%%%%%%%%%%%%%%%%%%%%%%%%

\begin{remark}\label{theorem:2.16.a}
	Moreover, we believe that one may find obstacles such that the supersolutions 
	constructed in the previous Remark~\ref{theorem:2.16} solve  the corresponding obstacle problems. 
	If this expectation were correct the bound \eqref{eq:1.3} would also be optimal for admissible obstacles.
	However, technically this is a rather demanding project and may be addressed in future work.
	Nevertheless the following reasoning may give some evidence to our belief.
	
	In this remark  we prescribe a suitable boundary slope
	$$
	u'(0)=-u'(1)=\beta \quad \mbox{\ with\ } \quad \beta \in (0,\infty),
	$$
	i.e., we work in the class 
	\begin{equation}\label{eq:2.16.a.1}
	\begin{array}{r}
	\mathcal{M}_\beta (\psi) := \{ u \in  H^2(0,1)  \mid  u(0)= u(1)=0,\,\,\, u'(0)=-u'(1)=\beta,  \\
	u(x) = u(1-x), \,\,\, u(x) \ge \psi(x), \,\,\, \text{for all} \,\,\, x \in (0,1) \}.
	\end{array}
	\end{equation}
	We outline how we may construct suitable obstacles $\psi_\beta $ with
	\begin{equation}\label{eq:2.16.a.2}
	\sup_{\beta\in (0,\infty)}\psi_\beta (1/2)= \frac{z_0}{2}=1.1890464540\ldots
	\end{equation}
	and find $u \in \mathcal{M}_\beta(\psi_\beta)$ such that 
	\begin{align*}
	W(u) = \inf_{v \in \mathcal{M}_\beta(\psi_\beta)} W(v). 
	\end{align*}
	
	We take any $\beta \in (0,\infty)$ and $c<c_0$ but close enough to $c_0$ and find then 
	$$
	x_1\in (0,1/2):\quad \frac{c}{2}-cx_1 =G(\beta) \quad \Leftrightarrow\quad x_1=\frac12 -\frac1c G(\beta).
	$$
	Recalling the function $U_c$ from Remark~\ref{theorem:2.4} we shall rescale and translate $U_c|_{[-x_1,1+x_1]}$
	to the interval $[0,1]$:
	$$
	\widetilde{u}_{\beta,c}:[0,1]\to[0,\infty),\quad \widetilde{u}_{\beta,c}(x):= \frac{1}{1+2x_1}\Big(  U_c \big(-x_1+(1+2x_1)x\big) + U_c(x_1)\Big).
	$$
	Since $U_c(\, .\,)$ is even around $1/2$ the same holds true for $\widetilde{u}_{\beta,c}$. Moreover
	$$
	\widetilde{u}_{\beta,c} (0)=0,\quad \widetilde{u}_{\beta,c}' (0)=U_c' (-x_1)=u_c'(x_1)=G^{-1}\left( \frac{c}{2}-cx_1\right) =\beta.
	$$
	Defining 
	$$
	\psi_{\beta,c,\varepsilon}:[0,1] \to \mathbb{R},\quad \psi_{\beta,c,\varepsilon}(x) := \widetilde{u}_{\beta,c}(x)-\varepsilon
	$$
	for $\varepsilon>0$ but very close to $0$, we find that $\psi_{\beta,c,\varepsilon}$ satisfies Condition~\eqref{eq:A} and that
	$$
	\widetilde{u}_{\beta,c}\in \mathcal{M}_\beta \left(\psi_{\beta,c,\varepsilon} \right).
	$$
	As for the elastic energy we calculate by making use of Lemma~\ref{theorem:2.1}
	and the choice of $x_1$:
	\begin{align*}
	W(\widetilde{u}_{\beta,c})=&(1+2x_1)\cdot\big( W(u_c|_{[0,1]}) +2\cdot W(u_c|_{[0,x_1]})\big)\\
	=& (1+2x_1)\cdot \big( c^2 +2c^2x_1\big)= (1+2x_1)^2c^2=4 (c-G(\beta))^2.
	\end{align*}
	This shows that for the obstacle $\psi_{\beta,c,\varepsilon}$ described above we have that
	$$
	\inf_{v \in \mathcal{M}_\beta(\psi_{\beta,c,\varepsilon})} W(v)\le 4 (c-G(\beta))^2 \mbox{\ with equality iff\ } \widetilde{u}_{\beta,c}
	\mbox{\ is minimal.}
	$$
	This implies that when investigating minimising sequences it suffices to consider
	$$
	v\in \mathcal{M}_\beta(\psi_{\beta,c,\varepsilon}) \mbox{\ with\ } W(v)\le 4 (c-G(\beta))^2.
	$$
	For such $v$ we mimick the proof of Lemma~\ref{theorem:2.2} and pick $x_{\rm max} \in (0,1)$  such that 
	$
	v'(x_{\rm max}) = \max_{x \in [0, 1]}| v'(x) |. 
	$
	For simplicity we consider only the case 
	where $x_{\rm max} \in (0, 1/2)$. 
	By H\"older's inequality, we have 
	\begin{align*}
	& 4 G(v'(x_{\rm max})) -2G(\beta)\\
	&= \int^{v'(x_{\rm max})}_{v'(0)} \dfrac{d \tau}{(1+ \tau^2)^{\frac{5}{4}}} +  \int^{v'(x_{\rm max})}_{v'(1-x_{\rm max})} \dfrac{d \tau}{(1+ \tau^2)^{\frac{5}{4}}} 
	+ \int^{v'(1)}_{v'(1-x_{\rm max})} \dfrac{d \tau}{(1+ \tau^2)^{\frac{5}{4}}} \\
	&= \int^{x_{\rm max}}_{0} \dfrac{v''(x)}{(1 + v'(x)^2)^{\frac{5}{4}}} \, dx 
	+ \int^{1-x_{\rm max}}_{x_{\rm max}} \dfrac{- v''(x)}{(1 + v'(x)^2)^{\frac{5}{4}}} \, dx 
	+ \int^{1}_{1-x_{\rm max}} \dfrac{v''(x)}{(1 + v'(x)^2)^{\frac{5}{4}}} \, dx \\
	& \le \int^1_0 |\kappa_v(x)| (1 + v'(x)^2)^{1/4} \, dx 
	\le W(v)^{1/2} \le 2 (c-G(\beta)).
	\end{align*}
	We obtain as in  the proof of Lemma~\ref{theorem:2.2} that 
	\begin{align} %\label{eq:2.2}
	\max_{x \in [0, 1]}| v'(x) |=	v'(x_{\rm max}) \le G^{-1}\left( \dfrac{c}{2} \right) < \infty. 
	\end{align}
	Basing upon this estimate we find as in the proof of Theorem~\ref{theorem:1.1} some
	$$
	u\in  \mathcal{M}_\beta(\psi_{\beta,c,\varepsilon}) \mbox{\ \ with\ \ }
	W(u) = \inf_{v \in \mathcal{M}_\beta(\psi_{\beta,c,\varepsilon})} W(v).
	$$
	In order to show also \eqref{eq:2.16.a.2} we finally observe that
	\begin{align*}
	\lim_{k\to\infty} \psi_{z_0,c_0-1/k,1/k}\left(\frac12\right)=&\widetilde{u}_{z_0,c_0}\left(\frac12\right)
	=\frac{1}{1+2x_1} \left(U_{c_0}\left(\frac12\right)+U_{c_0} (x_1)\right)\\
	=&\frac{1}{1+2x_1} \left(u_{c_0}\left(\frac12\right)+u_{c_0} (x_1)\right)\\
	=&\frac{2}{c_0(1+2x_1)}\left( 1+\frac{1}{(1+G^{-1} (c_0/2-c_0x_1)^2)^{1/4}}\right) \\
	=&\frac{1}{c_0-G(z_0)}\left( 1+\frac{1}{(1+z_0^2)^{1/4}}\right) =\frac{z_0}{2}.
	\end{align*}
	In the last step we used \eqref{eq:2.16.a.3}.
\end{remark}

%%%%%%%%%%%%%%%%%%%%%%%%%%%%%%%%%%%%%%%%%
%%%% Here ends the new Remark 2.16 %%%%
%%%%%%%%%%%%%%%%%%%%%%%%%%%%%%%%%%%%%%%%%
\begin{remark}\label{theorem:2.17}
	The same reasoning leading to \eqref{eq:2.21} could also be applied 
	to the Navier problem where in arclength parameterisation 
	$\kappa(L)=0$. Using a comparison solution as before with $k(0)=\kappa(0)$,
	$y(0)=Y(0)$, $\tau (0)=T(0)$ would lead to $k(L)\le 0$, i.e. there one is still
	in the concave regime, where the average slope is $\le 4/c_0$. 
	As before, $y$ is right and above $Y$ so that the average slope of $u$ is at most $4/c_0$. 
	This shows that any sufficiently smooth supersolution over $[0,1]$,
	which is symmetric around $1/2$ and satisfies Navier boundary conditions, obeys the universal bound
\begin{equation*}
\forall x\in [0,1]:\quad 
0\le u(x) \le \frac{2}{c_0}.
\end{equation*}	
This is a significant generalisation of \cite[Lemma~4.3]{DD} and \cite[Theorem~1.1]{Mueller_2019}.
\end{remark}
%%%%%%%%%%%%%%%%%%%%%%%%%%%%%%%%%%%%%%%%

%%%%%%%%%%%%%%%%%%%%%%%%%%%%%%%%%%%%%
%%%%%%%%%%%%%%%%%%%%%%%%%%%%%%%%%%%%%
\subsection{The obstacle problem in the non-symmetric case} \label{subsection:2.5}
Let $\psi$ be an obstacle function, which needs no longer to be symmetric and  is always subject 
to the following condition: 
\begin{align} \label{eq:Astar} \tag{${\rm A}_*$}
\psi \in C^0([0,1]), \quad 
\psi (0)<0,\quad \psi(1)<0,\quad 
\exists x_0\in (0,1): \psi (x_0) >0.
\end{align}

We define the  set of admissible functions as follows: 
\begin{align*}
\mathcal{N}(\psi) := \{ v \in H^2_0(0,1) \mid v(x) \ge \psi(x) \,\, \text{for all}\,\, x \in [0,1] \}. 
\end{align*}
Assume that $\psi$ satisfies 
\begin{align} \label{eq:C*} \tag{${\rm B}_*$}
\inf_{v \in N(\psi)} W(v) < c_0^2. 
\end{align}

%%%%%%%%%%%%%%%%%%%%%%%%%%%%%%%%%%%%%%%%%%%%%%%%
\begin{lemma} \label{theorem:2.18}
Assume that $v \in \mathcal{N}(\psi)$ satisfies $W(v) \le c_2^2$ for some $c_2\in [0,c_0)$. Then 
\begin{align*}
\max_{x \in [0,1]} |v'(x)| \le G^{-1}\left( \dfrac{c_2}{2} \right) < \infty.
\end{align*}
\end{lemma}
%%%%%%%%%%%%%%%%%%%%%%%%%%%%%%%%%%%%%%%%%%%%%%%%
\begin{proof}
Let $x_{\rm max} \in (0,1)$ be such that $|v'(x_{\rm max})| = \max_{x \in [0,1]}|v'(x)|$. 
First we consider the case where $v'(x_{\rm max})>0$. 
Then there exists $x_0 \in (x_{\rm max},1)$ where $v$ attains its maximum which means that
\begin{align*}
v'(x_0)=0, \quad v''(x_0)\le 0. 
\end{align*}
Considering $v |_{[0, x_0]}$, we can adopt the argument from the proof of Lemma \ref{theorem:2.2}. 
Then we have 
\begin{align*}
2 G(v'(x_{\max})) 
 & = \int^{v'(x_{\max} )}_0 \dfrac{d \tau}{(1+ \tau^2)^{5/4}} + \int^{v'(x_{\max} )}_0 \dfrac{d \tau}{(1+ \tau^2)^{5/4}} \\
 &= \int^{x_{\rm max}}_{0} \dfrac{v''(x)}{(1 + v'(x)^2)^{5/4}} \, dx + \int^{x_0}_{x_{\rm max}} \dfrac{-v''(x)}{(1 + v'(x)^2)^{5/4}} \, dx \\
 &\le W(v)^{1/2} \le c_2. 
\end{align*}
Thanks to the monotonicity of of $G$, we obtain 
\begin{align*}
v'(x_{\rm max}) \le G^{-1}\left( \dfrac{c_2}{2} \right). 
\end{align*}

For the case where $v'(x_{\rm max}) < 0$, considering $-v$ instead of $v$ and using the same argument as in the first case,  
we obtain the required conclusion. The proof is complete.  
\end{proof}

Combining Lemma \ref{theorem:2.18} with the same argument as in the proof of Theorem \ref{theorem:1.1} and Proposition \ref{theorem:2.8}, we obtain the following:  
%%%%%%%%%%%%%%%%%%%%%%%%%%%%%%%%%%%
\begin{theorem} \label{theorem:2.19}
Assume that $\psi\in C^0([0,1])$ satisfies conditions~\eqref{eq:Astar} and \eqref{eq:C*}. 
Then there exists $u \in \mathcal{N}(\psi)$ such that 
\begin{align*}
W(u) = \inf_{v \in \mathcal{N}(\psi)} W(v). 
\end{align*}
Moreover, $u \in C^2([0,1])$, $u''$ is weakly differentiable and $u''' \in BV(0,1)$. 
\end{theorem}
%%%%%%%%%%%%%%%%%%%%%%%%%%%%%%%%%%%

%%%%%%%%%%%%%%%%%%%%%%%%%%%%%%%%%%%
\begin{remark}\label{theorem:2.20}{\rm 
Analogues of Propositions~\ref{theorem:2.10} and \ref{theorem:2.11} can be proved also in the nonsymmetric case.
}
\end{remark}
%%%%%%%%%%%%%%%%%%%%%%%%%%%%%%%%%%%

%%%%%%%%%%%%%%%%%%%%%%%%%%%%%%%%%%%
\begin{remark}\label{theorem:2.21}{\rm \ 
We use the same notation as in Remark~\ref{theorem:2.4} and consider the functions 
$\hat{u}_c : [0, 1] \to \mathbb{R}$ as there. We recall that
\begin{align*}
W(\hat{u}_c) = 2 W(U_c) = 4 c^2. 
\end{align*}
In order to satisfy condition~\eqref{eq:C*} we have now to assume that
$$c \in (0, c_0/2),\quad \frac{c_0}{2} =
\sqrt{\pi} \frac{\Gamma(3/4)}{2\Gamma(5/4)}=1.198140234\ldots,
$$
here we have that $W(\hat{u}_c)< c_0^2$. 
This implies that for any sufficiently small $\varepsilon >0$, any $\psi \in C^0([0,1])$ with $\psi \le \hat{u}_c-\varepsilon $ obeys conditions~\eqref{eq:Astar} and~\eqref{eq:C*}. 
In particular, any $\hat{u}_c-\varepsilon$  with $c\in (0, c_0/2)$ is itself an admissible obstacle.

%\vspace{-2cm}
\begin{figure}[h] 
\centering 
\includegraphics[width=.45\textwidth]{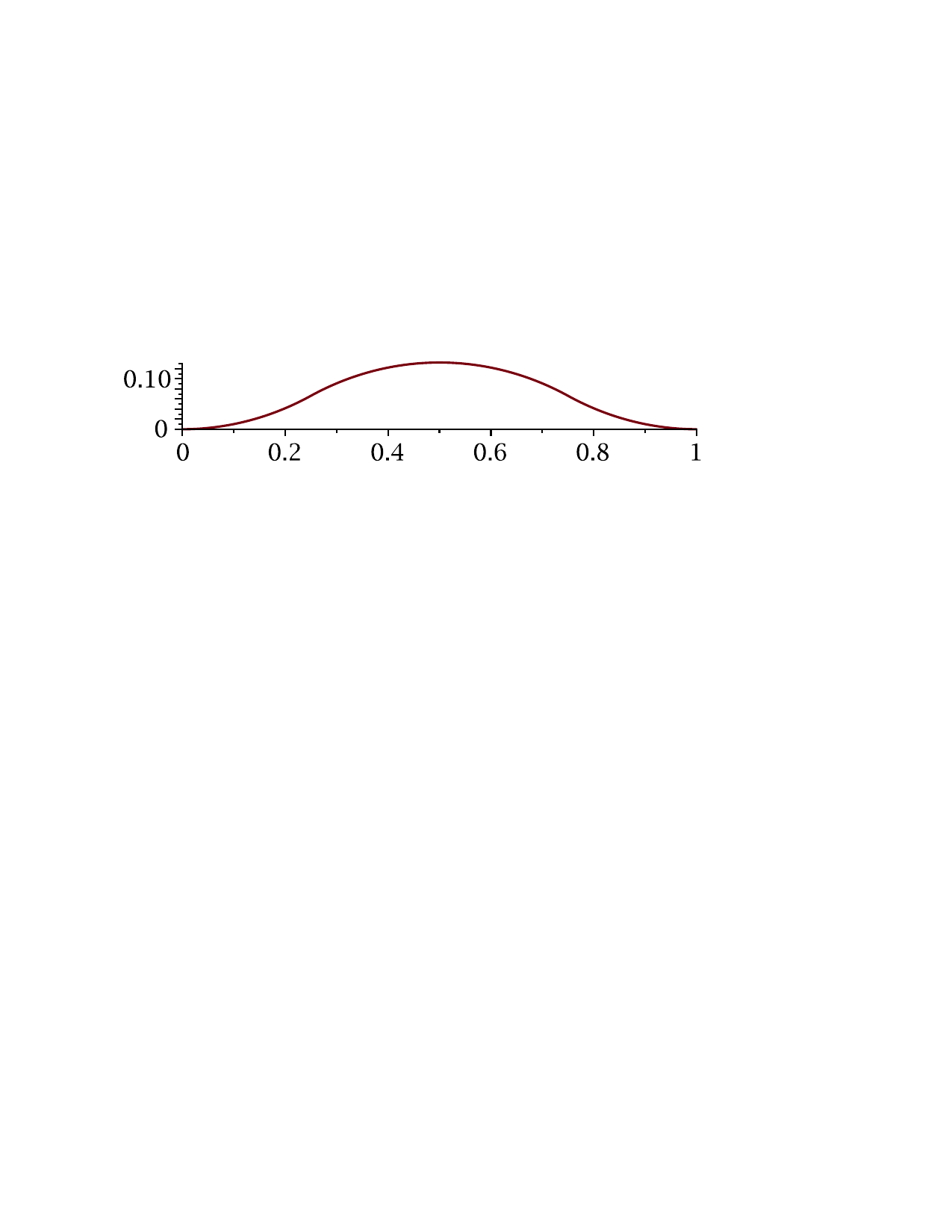}\hspace{0.3cm}
\includegraphics[width=.45\textwidth]{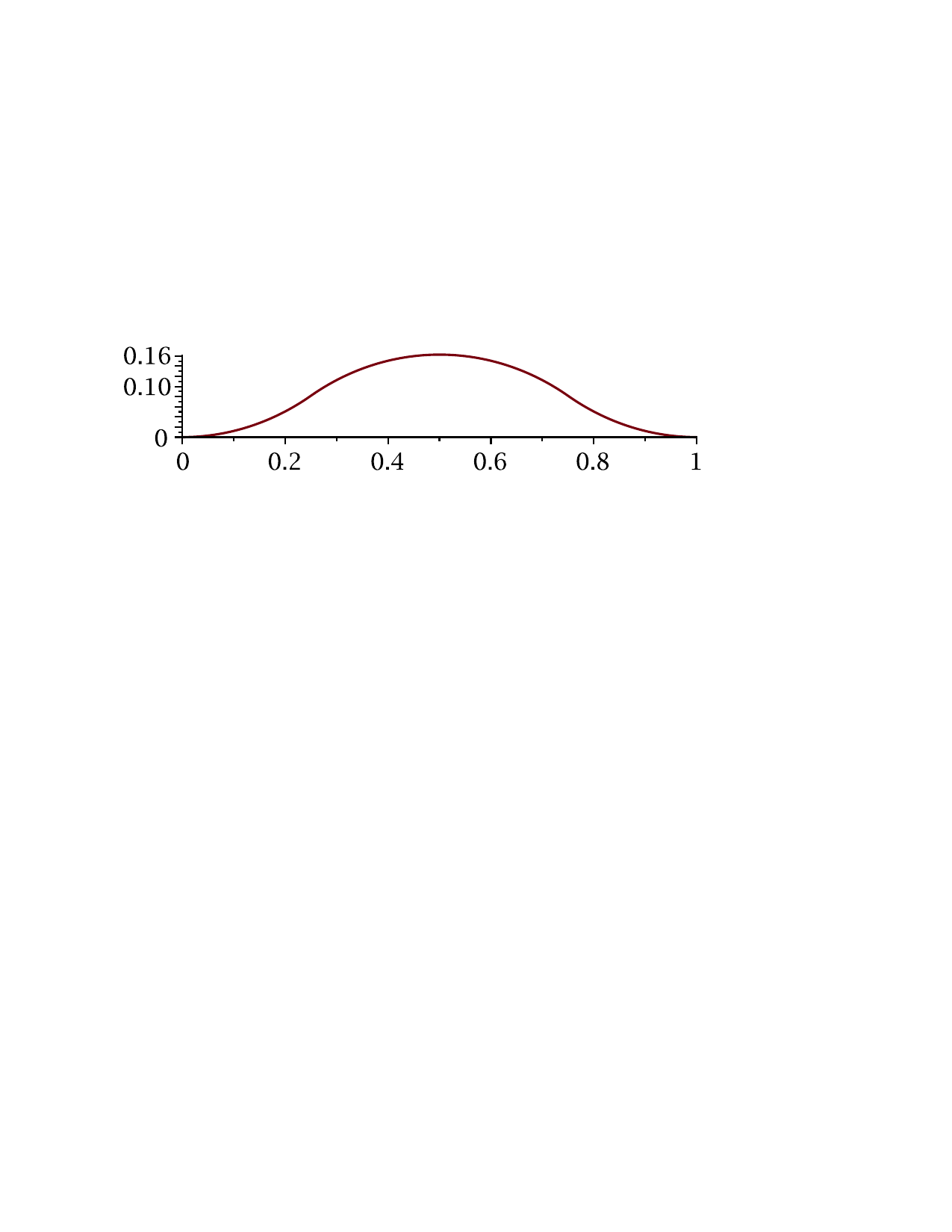}\vspace{-5cm}
\caption{$\hat{u}_{1.0}$  (left) and $\hat{u}_{c_0/2}$  (right)}
\label{figure:2} 
\end{figure} 

One may observe that
$$
\max_{x\in[0,1]} \hat{u}_{c_0/2}=\hat{u}_{c_0/2} (1/2)
%= \dfrac{2}{c \sqrt[4]{1 + G^{-1}\left( \tfrac{c}{2} - c x \right)^2}} - \dfrac{2}{c \sqrt[4]{1 + G^{-1}\left( \tfrac{c}{2}\right)^2}}
=\dfrac{4}{c_0}\left(1- \dfrac{1}{ \sqrt[4]{1 + G^{-1}\left( \tfrac{c_0}{4}\right)^2}}\right)
=0.1628208198\ldots.
$$
}
\end{remark}
%%%%%%%%%%%%%%%%%%%%%%%%%%%%%%%%

%%%%%%%%%%%%%%%%%%%%%%%%%%%%%%%%%%%%%%%%%%%%%%%%
%%%%%%%%%%%%%%%%%%%%%%%%%%%%%%%%%%%%%%%%%%%%%%%%
%%%%%%%%%%%%%%%%%%%%%%%%%%%%%%%%%%%%%%%%%%%%%%%%

\section{An obstacle problem for surfaces of revolution} \label{section:3}
Let $u : [-1, 1] \to (0, \infty)$ be a profile curve with Dirichlet boundary conditions 
\begin{align}\label{eq:3.1}
u(1)=u(-1)= \alpha > 0, \quad u'(1)=u'(-1)=0. 
\end{align}
Then the mean curvature $H$ and Gauss curvature $K$ of the surface of revolution which are obtained by 
rotating the graph of $u$ around the $x$-axis 
$$
[-1,1]\times [0,2\pi]\ni (x,\varphi)\mapsto(x,u(x)\cos\varphi,u(x)\sin\varphi)
$$
are given as follows: 
\begin{align*}
H &= \dfrac{1}{u(x) \sqrt{1+(u'(x))^2}} - \dfrac{u''(x)}{(1 + (u'(x))^2)^{3/2}}, \\
K &= -\dfrac{u''(x)}{u(x)(1 + (u'(x))^2)^2}.  
\end{align*}
Thus the Willmore functional is given by 
\begin{equation}
\label{eq:3.2}
\begin{aligned}
W(u) 
&= \dfrac{\pi}{2} \int^1_{-1} \Bigl( \dfrac{1}{u(x) \sqrt{1+(u'(x))^2}} - \dfrac{u''(x)}{(1+(u'(x))^2)^{\frac{3}{2}}} \Bigr)^2 u(x) \sqrt{1 + (u'(x))^2} \, dx \\
&= \dfrac{\pi}{2} \int^1_{-1} \dfrac{u''(x)^2u(x)}{(1+(u'(x))^2)^{\frac{5}{2}}}\, dx
   +\dfrac{\pi}{2} \int^1_{-1} \dfrac{1}{u(x) \sqrt{1+(u'(x))^2}}\, dx \\
 &\quad  -  \pi \Bigm[ \dfrac{u'(x)}{\sqrt{1+u'(x)^2}} \Bigm]^{1}_{-1} \\
&= \dfrac{\pi}{2} \int^1_{-1} \dfrac{u''(x)^2u(x)}{(1+(u'(x))^2)^{\frac{5}{2}}}\, dx
+\dfrac{\pi}{2} \int^1_{-1} \dfrac{1}{u(x) \sqrt{1+(u'(x))^2}}\, dx.
\end{aligned}
\end{equation}
Bryant and Griffiths (see \cite{BryantGriffiths_1986}) and independently  Pinkall (cf. e.g. \cite{Herterich_Pinkall}) made an important observation concerning the Willmore energy $W(u)$ for surfaces of revolution and the elastic energy of the graph of $u$, considered as a curve in the hyperbolic space. This is the upper half plane $\mathbb{R}\times (0,\infty)$ equipped with  the metric 
$ds^2=\frac1{y^2}(dx^2+dy^2)$. The hyperbolic curvature and the hyperbolic elastica functional of the graph of $u$ are given by the formulae: 
\begin{align*}
\kappa_h(x)&= \dfrac{u(x) u''(x)}{(1+(u'(x))^2)^{3/2}} + \dfrac{1}{\sqrt{1+(u'(x))^2}}, \\
W^h(u)&= \int^1_{-1} \kappa_h(x)^2 \dfrac{\sqrt{1+(u'(x))^2}}{u(x)} \, dx .
\end{align*}
It follows then  from a simple calculation that 
\begin{align} \label{eq:3.3}
W(u|_{[a,b]}) = \dfrac{\pi}{2} W^h(u|_{[a,b]}) - 2 \pi \Bigm[ \dfrac{u'(x)}{\sqrt{1+u'(x)^2}} \Bigm]^{a}_{b}, 
\end{align}
see e.g. \cite[p. 384]{DDG} or \cite[p. 378]{GGS}. 
This relation turned out to be very useful and was intensively exploited by Langer and Singer (see \cite{LangerSinger_1984a,LangerSinger_1984b}). Moreover, among others, they uncovered remarkable differential geometric properties 
of hyperbolic elasticae which were subsequently exploited also by Eichmann and Grunau (see e.g. \cite{Eichmann2016,EichGr}). A comprehensive exposition of the analysis and differential geometry of the Dirichlet problem of the (euclidean as well as hyperbolic) elastica equation can be found in \cite{Mandel1, Mandel2}.

\subsection{Willmore surfaces of revolution under Dirichlet boundary conditions}
\label{subsection:3.1} 

As in the one-dimensional case it is good to have results about the existence, the regularity  and the 
shape of minimisers  (without obstacle) under Dirichlet boundary conditions \eqref{eq:3.1} in  mind.

Combining \cite[Theorem 1.1]{DDG}, \cite[Lemma 3.20]{DFGS}, \cite[Theorem 1.1]{EK}, one has the following result:
%%%%%%%%%%%%%%%%%%%%%%%%%%%%%%%%%%
\begin{theorem}\label{theorem:3.1}
For each $\alpha>0$ and 
there exists a function $u\in C^\infty([-1,1],(0,\infty))$ which minimises 
the Willmore energy $W(\, .\, )$ in the class
$$
\{ v\in H^2((-1,1);(0,\infty))\, | \, v(1)=v(-1)= \alpha , \quad v'(1)=v'(-1)=0 \}.
$$
The corresponding surface of revolution $\Gamma\subset\mathbb R^3$ solves the Dirichlet problem 
\begin{equation*}
\left\{
      \begin{array}{l}
        \triangle_\Gamma H+2H(H^2-K)=0\quad\mbox{in}\ (-1,1) ,\\[1ex]
        u(-1)=u(+1)=\alpha,\qquad
        u'(-1)=-u'(+1)=0.
      \end{array}
    \right.
\end{equation*}
Any such minimiser is symmetric  (i.e. $u(x)=u(-x)$) and  has the following properties: 
\begin{align*}
-\frac{x}{\alpha}< u'(x)< 0, \quad x+u(x)u'(x) > 0, \quad \text{in} \quad (0,1), 
\end{align*}
and 
\begin{align*}
\alpha < u(x) < \sqrt{1+\alpha^2-x^2} \quad \text{in} \quad (-1,1). 
\end{align*}
\end{theorem}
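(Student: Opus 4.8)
Looking at the final statement, it is Theorem~\ref{theorem:3.1}, which is essentially a "black box" compilation result: existence of a smooth minimiser (from \cite[Theorem 1.1]{DDG}), its regularity (from \cite[Lemma 3.20]{DFGS}, \cite[Theorem 1.1]{EK}), and the symmetry plus pinching bounds. Since the theorem is explicitly attributed to three references, the "proof" here is really a plan to assemble these ingredients rather than to prove everything from scratch.

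The plan is to assemble the statement from the three cited sources, using the hyperbolic reformulation \eqref{eq:3.3} as the organising principle. First I would invoke \cite[Theorem~1.1]{DDG}, which establishes existence of a minimiser in the admissible class $\{v\in H^2((-1,1);(0,\infty)) : v(\pm 1)=\alpha,\ v'(\pm 1)=0\}$ together with the symmetry assertion $u(x)=u(-x)$; the key point there is that the hyperbolic Willmore energy $W^h$ is, up to the boundary term in \eqref{eq:3.3} which is fixed by the Dirichlet data, conformally invariant and lower semicontinuous, and that symmetrisation does not increase $W^h$, so a symmetric minimiser exists. Then I would record that any such minimiser is a weak, hence (by elliptic regularity bootstrapping for the fourth-order Euler--Lagrange system) smooth solution of the Willmore surface equation $\triangle_\Gamma H+2H(H^2-K)=0$ with the stated boundary conditions; the interior smoothness and smoothness up to the boundary are exactly the content of \cite[Lemma~3.20]{DFGS} and \cite[Theorem~1.1]{EK}, and I would cite them for this step rather than reproving the boundary regularity, which is delicate because the profile curve $u$ must be shown to stay bounded away from $0$.

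Next I would establish the qualitative bounds. The two-sided bound $\alpha<u(x)<\sqrt{1+\alpha^2-x^2}$ on $(-1,1)$ comes from comparison: the lower bound $u>\alpha$ follows because the constant-mean-curvature or minimal competitor arguments (equivalently, a strong maximum principle applied to the Willmore equation, or a direct energy comparison with the function obtained by reflecting/truncating) force the minimiser to lie strictly above its boundary value since a genuine dip below $\alpha$ could be removed at a strict energy cost; the upper bound is obtained by comparison with the spherical cap profile $x\mapsto\sqrt{1+\alpha^2-x^2}$, which is a piece of a round sphere (hence a Willmore surface with $H\equiv$ const), together with the fact that at a first touching point from below one would get a contradiction with the equation or with the energy-minimising property. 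For the derivative bounds, $-x/\alpha<u'(x)<0$ on $(0,1)$: the sign $u'<0$ on $(0,1)$ follows from symmetry and the maximum being attained only at $x=0$ (again a consequence of the structure of the equation plus the boundary slope $u'(1)=0$, ruling out an interior minimum of $u$ on $(0,1)$), while $u'(x)>-x/\alpha$, i.e. $x+\alpha u'(x)>0$, and the stronger $x+u(x)u'(x)>0$ follow by integrating the differential inequality satisfied by the auxiliary quantity $x+u u'$ (whose derivative is controlled using $u''\le 0$-type information from $K=-u''/(u(1+u'^2)^2)$ and the equation), noting it vanishes at $x=0$ and showing it cannot return to $0$.

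The main obstacle is the boundary regularity, i.e. showing $u\in C^\infty([-1,1])$ up to the endpoints with $u(\pm 1)=\alpha$, $u'(\pm 1)=0$ attained in the classical sense; a priori a minimiser of $W^h$ in $H^2$ need not even be bounded away from zero near the boundary, and ruling out boundary degeneration plus setting up the right function spaces for the fourth-order boundary bootstrap is exactly what \cite[Theorem~1.1]{EK} (and the preparatory estimates in \cite[Lemma~3.20]{DFGS}) are devoted to. Since the theorem is stated as a compilation, I would not reprove this but would make clear in the write-up that this is where the real work sits, and that the energy threshold $4\pi$ (relevant for the obstacle problems of Section~\ref{section:3}) does not enter here because in the free problem no such smallness is needed — the existence and regularity hold for every $\alpha>0$.
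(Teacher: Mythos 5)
Your overall plan matches the paper exactly: Theorem~\ref{theorem:3.1} is stated there as a pure compilation of \cite[Theorem~1.1]{DDG}, \cite[Lemma~3.20]{DFGS} and \cite[Theorem~1.1]{EK}, with no proof given, so assembling the three references is precisely what is intended. However, the internal attribution in your sketch is partly off. In \cite{DDG} the minimisation is carried out \emph{a priori in the symmetric class}, so that reference supplies existence and regularity of a symmetric minimiser but not the symmetry of an arbitrary minimiser; the statement ``any such minimiser is symmetric'' is exactly the content of \cite[Theorem~1.1]{EK}, which is why that paper is cited separately, and it is not obtained by a symmetrisation/rearrangement argument (no such monotonicity of $W^h$ under symmetrisation is known — if it were, \cite{EK} would have been unnecessary). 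Likewise, the quantitative bounds $-x/\alpha<u'(x)<0$, $x+u(x)u'(x)>0$ and $\alpha<u(x)<\sqrt{1+\alpha^2-x^2}$ are taken verbatim from \cite[Lemma~3.20]{DFGS}; the heuristic maximum-principle and spherical-cap comparison you sketch is not how they are derived there (the fourth-order operator does not admit a direct comparison principle, and the actual argument in \cite{DFGS} goes through a careful analysis of the hyperbolic curvature $\kappa_h$ along minimising sequences of a suitably restricted class). None of this affects the validity of the compilation as such, but if you were to write this up you should reassign the ingredients correctly and not present the speculative symmetrisation and comparison arguments as if they were the proofs in the cited sources.
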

%%%%%%%%%%%%%%%%%%%%%%%%%%%%%%%%%

For plots of numerically calculated solutions one may see \cite[Figure 17]{DFGS}.
For $\alpha\searrow 0$ one finds boundary layers close to $\pm 1$ of width $\alpha$ where any minimiser 
approaches after rescaling by the factor $1/\alpha$ a catenoid.
On the other hand,  in any compact subset of $(-1,1)$ 
any minimiser approaches the upper unit half circle centered at $(0, 0)$, 
see \cite{G}.

%%%%%%%%%%%%%%%%%%%%%%%%%%%%%%%%%%%%%%%%%%%%%%
%%%%%%%%%%%%%%%%%%%%%%%%%%%%%%%%%%%%%%%%%%%%%%
\subsection{The obstacle problem} 
\label{subsection:3.2}

Assume that $\psi \in C^0([-1,1]; (0, \infty))$ satisfies condition \eqref{eq:C}. 
We recall the definition in \eqref{eq:1.5.a} of  the admissible set $N_\alpha(\psi)$. 
In the following,  $M_\alpha(\psi)$ denotes the infimum of the Willmore functional $W(v)$ in this admissible set, i.e., 
\begin{equation*}
M_\alpha(\psi) := \inf_{v \in N_\alpha(\psi)} W(v). 
\end{equation*}

%%%%%%%%%%%%%%%%%%%%%%%%%%%%%%%%%%%%%%%
\begin{lemma} \label{theorem:3.2}
Assume that $v \in N_\alpha(\psi)$. If
\begin{align} \label{eq:3.4}
W(v) < 4 \pi
\end{align}
is satisfied then it holds for all $x \in [-1,1]$ that 
\begin{equation}
|v'(x)| \le K:=  \dfrac{1}{\sqrt{\left( \tfrac{4\pi}{W(v)} \right)^2 - 1}}\ . \label{eq:3.5}
\end{equation}
If for some $K$, $|v'(x)| \le K$ is satisfied on $[-1,1]$, we have 
\begin{equation}
\alpha + K \ge v(x) \ge M:= \dfrac{K}{\exp(\frac{2K}{\pi}\sqrt{1+K^2\, }\,\, W(v))-1} \label{eq:3.6}
\end{equation}%Change - reformulation: 3.6 does not use 3.4, but only 3.5. This is used in the proof of Theorem 1.4.
for all $x \in [-1,1]$.
\end{lemma}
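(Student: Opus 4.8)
The two gradient bounds will come from essentially the same elastica-type integration argument used for Lemma~\ref{theorem:2.2}, while the two-sided bound on $v$ will be a quantitative consequence of controlling the logarithmic derivative $\frac{d}{dx}\log v$ via the Willmore integrand. The plan is as follows.

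First I would establish \eqref{eq:3.5}. Using formula \eqref{eq:3.2}, the curvature term $\frac{\pi}{2}\int_{-1}^1 \frac{u''(x)^2 u(x)}{(1+u'(x)^2)^{5/2}}\,dx$ is part of $W(v)$, but unlike the one-dimensional case the weight $u(x)$ is present and not normalised; the cleaner route is to work with the hyperbolic reformulation \eqref{eq:3.3}. From $W_{[-1,1]}(v) = \frac{\pi}{2}W^h_{[-1,1]}(v) - 2\pi\left[\frac{v'}{\sqrt{1+v'^2}}\right]_{-1}^{1}$, the boundary condition $v'(\pm1)=0$ kills the bracket, so $W(v)=\frac{\pi}{2}W^h(v)$. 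The key observation (as in \cite[p.~384]{DDG}) is that $\kappa_h$ is the geodesic curvature of the profile in the hyperbolic metric, and that $W^h$ controls $\int |\kappa_h|\,\frac{\sqrt{1+v'^2}}{v}\,dx$ by Cauchy--Schwarz against the hyperbolic length. Concretely: pick $x_{\max}$ with $|v'(x_{\max})|=\max|v'|$, say $v'(x_{\max})>0$; then there is $x_0\in(x_{\max},1]$ with $v'(x_0)=0$, and $\int_{x_{\max}}^{x_0}\frac{-v''}{(1+v'^2)}\,dx = \arctan(v'(x_{\max}))$, similarly on the left, giving $2\arctan(|v'(x_{\max})|) \le \int_{-1}^1\frac{|v''|}{1+v'^2}\,dx$. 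The integrand $\frac{|v''|}{1+v'^2}$ should be bounded by a Cauchy--Schwarz pairing of the square root of the Willmore density with $\frac{1}{v\sqrt{1+v'^2}}$ or similar; the clean statement $2\arctan|v'(x_{\max})| \le \sqrt{W(v)/\pi}\cdot\sqrt{\text{(something)}}$ must reproduce exactly the constant in \eqref{eq:3.5}, so I would reverse-engineer which pairing gives $\sqrt{(4\pi/W(v))^2-1}^{-1}$: since $\tan(2\arctan K) $ relates $K$ to the angle, and $W(v)<4\pi$ corresponds to total turning less than $2\pi$, the bound $2\arctan|v'|\le \pi\sqrt{W(v)/(4\pi)}\cdot 2$ type inequality should, after taking $\tan$, yield precisely \eqref{eq:3.5}; the factor $4\pi$ is the Willmore energy of a sphere, which is the right threshold.

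Second, given $|v'|\le K$ on $[-1,1]$, I would derive \eqref{eq:3.6}. The upper bound $v(x)\le \alpha+K$ is immediate: $v(x)=\alpha+\int_{\pm1}^x v'(\xi)\,d\xi$ and $|x\mp1|\le 2$... actually more carefully $v(x) = \alpha - \int_x^{1}v'(\xi)\,d\xi \le \alpha + K(1-x) \le \alpha + 2K$ — here I would check whether the paper wants $\alpha+K$ via integrating from the nearer endpoint over a length $\le 1$, or whether symmetry ($v(x)=v(-x)$) lets one integrate over $[0,1]$ giving length $\le 1$; in the symmetric class $N_\alpha(\psi)$ one has $v'(0)=0$, so $v(x)=v(0)+\int_0^x v'$, and $v(0)\le \alpha$ is false in general — rather $v(0)$ could be large. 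Instead, integrating from $x=1$: $v(x)=\alpha+\int_x^1(-v'(\xi))\,d\xi\le\alpha+K(1-x)$, and for $x\in[0,1]$ this is $\le\alpha+K$. By symmetry this covers $[-1,1]$, giving $v(x)\le\alpha+K$. For the lower bound, the idea is that if $v$ were very small somewhere, the term $\frac{\pi}{2}\int\frac{1}{v\sqrt{1+v'^2}}\,dx$ in \eqref{eq:3.2} would blow up, contradicting the finite energy $W(v)$. Quantitatively: from $v(x)\le\alpha+K$ and $|v'|\le K$ we get $\frac{|v'(x)|}{v(x)}\le \frac{K}{v(x)}$, but better, from $v(\pm1)=\alpha$ and Gronwall-type control $\left|\frac{d}{dx}\log v\right| = \frac{|v'|}{v}\le \frac{K\sqrt{1+K^2}}{1}\cdot\frac{1}{v\sqrt{1+v'^2}}\cdot$ (something), one integrates: the Willmore functional bounds $\int_{-1}^1 \frac{1}{v\sqrt{1+v'^2}}\,dx \le \frac{2}{\pi}W(v)$, hence $\int_{-1}^1 \frac{1}{v}\,dx \le \frac{2\sqrt{1+K^2}}{\pi}W(v)$, and then $\left|\log\frac{v(x)}{\alpha}\right| = \left|\int_{\pm1}^x \frac{v'}{v}\right| \le K\int_{-1}^1\frac{1}{v}\,dx \le \frac{2K\sqrt{1+K^2}}{\pi}W(v)$. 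This gives $v(x)\ge \alpha\exp\!\left(-\frac{2K\sqrt{1+K^2}}{\pi}W(v)\right)$, which is close to but not exactly the stated $M$. To land on $M=\frac{K}{\exp(\frac{2K}{\pi}\sqrt{1+K^2}\,W(v))-1}$ I would instead bound $\int\frac{1}{v}\,dx$ more sharply: from $v(1)=\alpha$ and integrating the differential inequality $v' \ge -\frac{K}{?}$... the form $\frac{K}{e^{(\cdots)}-1}$ strongly suggests solving an ODE like $\frac{v'}{v+K}$ or $\frac{v'}{v(v/K+1)}$ — i.e. bounding $\frac{d}{dx}\log\frac{v}{v+K}$ — so that integration produces $\frac{v}{v+K}\ge \frac{\alpha}{\alpha+K}e^{-(\cdots)}$, and then since we want a bound valid for all admissible $\psi$ one takes $\alpha\to$ the worst case or uses $\frac{\alpha}{\alpha+K}\ge$ something; the precise manipulation is a routine ODE estimate once the right quantity is identified.

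The main obstacle will be getting the constants exactly right — in particular identifying which Cauchy--Schwarz pairing and which logarithmic quantity ($\log v$ versus $\log\frac{v}{v+K}$) are used so that \eqref{eq:3.5} and \eqref{eq:3.6} come out with precisely the stated constants rather than merely comparable ones. The structural content (gradient bound from finite curvature energy via the turning-angle argument of Lemma~\ref{theorem:2.2}; pointwise positivity lower bound from finiteness of $\int \frac{1}{v\sqrt{1+v'^2}}$) is clear, but the bookkeeping that converts $W(v)<4\pi$ into the exact expression $\left((4\pi/W(v))^2-1\right)^{-1/2}$ — presumably by noting that the hyperbolic total curvature $\int|\kappa_h|\,ds_h$ is at most $\sqrt{W^h(v)\cdot L_h}$ with $L_h$ the hyperbolic length, and that here one can extract $L_h$ explicitly or bound it — is the delicate part and where I would expect to spend most of the effort.
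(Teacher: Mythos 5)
For part \eqref{eq:3.5} your plan goes off the rails: you gesture at a turning-angle argument via $\int\frac{|v''|}{1+v'^2}\,dx = $ an $\arctan$-type quantity, plus a Cauchy--Schwarz pairing you would have to ``reverse-engineer''. The paper's argument does not use any Cauchy--Schwarz and does not pass through $\arctan$. It directly exploits the identity \eqref{eq:3.3} for $W_{[a,b]}$ together with the nonnegativity of $W^h_{[a,b]}$: one decomposes $W(v)=W_{[-1,-x_{\max}]}+W_{[-x_{\max},x_{\max}]}+W_{[x_{\max},1]}$ and uses \eqref{eq:3.3} on one of the pieces (the middle one if $v'(x_{\max})<0$, the two outer ones if $v'(x_{\max})>0$). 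All the non-boundary terms are nonnegative and are thrown away; what remains is exactly the boundary contribution $\pm 2\pi\bigl[\frac{v'}{\sqrt{1+v'^2}}\bigr]$, and with the symmetry $v'(-x_{\max})=-v'(x_{\max})$ this evaluates to $4\pi\frac{K_1}{\sqrt{1+K_1^2}}$. Solving $W(v)\ge 4\pi K_1/\sqrt{1+K_1^2}$ for $K_1$ gives \eqref{eq:3.5} exactly. Your $\arctan$ quantity is genuinely different from $\frac{v'}{\sqrt{1+v'^2}}$; it would not reproduce this constant, and your plan never shows that it does.

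For the lower bound in \eqref{eq:3.6} your Gronwall/$\log v$ route is also not the right one, and you acknowledge as much. The paper's argument is more elementary: it fixes $x_{\min}$ where $v$ is minimal, observes $v(x)\le v_{\min}+K(x_{\min}-x)$ for $x\le x_{\min}$ (from $|v'|\le K$), and then lower-bounds $W(v)=\frac{\pi}{2}W^h(v)\ge\frac{\pi}{2}\int_{-1}^{1}\frac{dx}{v\sqrt{1+v'^2}}$ (after an integration by parts that kills the middle term of $W^h$, using $v'(\pm1)=0$). Restricting this integral to the length-one interval $[x_{\min}-1,x_{\min}]$, using $|v'|\le K$ and the pointwise linear upper bound on $v$, yields precisely $\frac{\pi}{2K\sqrt{1+K^2}}\log\frac{v_{\min}+K}{v_{\min}}$, which inverts to $v_{\min}\ge M$. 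Your proposed $\bigl|\log\tfrac{v(x)}{\alpha}\bigr|\le K\int\frac{1}{v}$ bound gives a different (and $\alpha$-dependent) quantity; you never land on the stated $M$. Your upper bound $v\le\alpha+K$ is correct and matches the paper, though your first attempt ($\alpha+2K$) shows you initially picked the wrong endpoint.

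In short, the structural intuition (the gradient bound should come from the $4\pi$ turning-type quantity, the lower bound from finiteness of $\int 1/(v\sqrt{1+v'^2})$) is right, but neither half of your plan actually produces the stated constants, and you say so yourself. The two concrete ideas you are missing are (i) use \eqref{eq:3.3} directly and discard $W^h\ge 0$, reading the gradient bound off the boundary term; and (ii) for the lower bound, compare $v$ near $x_{\min}$ with the linear barrier $v_{\min}+K(x_{\min}-x)$ and integrate $1/v$ against it explicitly over a unit-length interval.
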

%%%%%%%%%%%%%%%%%%%%%%%%%%%%%%%%%%%%%%%
\begin{proof}
To begin with, we prove \eqref{eq:3.5}. 
Let $x_{\rm max} \in (0,1)$ be such that 
\begin{align*}
|v'(x_{\rm max}) | = \max_{x \in [-1,1]} |v'(x)| =: K_1. 
\end{align*}
First we consider the case where $v'(x_{\rm max}) < 0$. 
Since $v \in N_\alpha(\psi)$, we find 
\begin{align} \label{eq:3.7}
v'(-x_{\rm max}) = - v'(x_{\rm max}) > 0.  
\end{align} 
It follows from \eqref{eq:3.3} and \eqref{eq:3.7} that 
\begin{align*}
W(v) &= W(v|_{[-1,-x_{\rm max}]}) + W(v|_{[-x_{\rm max},x_{\rm max}]}) + W(v|_{[x_{\rm max},1]}) \\
 & \ge \dfrac{\pi}{2} W^h(v|_{[-x_{\rm max}, x_{\rm max}]}) - 2 \pi \Bigm[ \dfrac{v'(x)}{\sqrt{1+v'(x)^2}} \Bigm]^{x_{\rm max}}_{-x_{\rm max}} 
   \ge 4 \pi \dfrac{K_1}{\sqrt{1+K_1^2}},  
\end{align*}
and then we have
\begin{align} \label{eq:3.8}
\left( \dfrac{W(v)}{4 \pi} \right)^2 \ge 1 - \dfrac{1}{1+K_1^2}. 
\end{align}
By \eqref{eq:3.4} we deduce from \eqref{eq:3.8} that $K_1 \le K$.  
We turn to the case where $v'(x_{\rm max}) > 0$. In this case, we have 
\begin{align} \label{eq:3.9}
v'(-x_{\rm max}) = - v'(x_{\rm max}) < 0.  
\end{align} 
Using \eqref{eq:3.3} again, we obtain 
\begin{align*}
W(v) &= W(v|_{[-1,-x_{\rm max}]}) + W(v|_{[-x_{\rm max},x_{\rm max}]}) + W(v|_{[x_{\rm max},1]}) \\
 & \ge W(v|_{[-1,-x_{\rm max}]}) + W(v|_{[x_{\rm max},1]}) \\ 
 & \ge \dfrac{\pi}{2} W^h(v|_{[-1,-x_{\rm max}]}) - 2 \pi \Bigm[ \dfrac{v'(x)}{\sqrt{1+v'(x)^2}} \Bigm]^{-x_{\rm max}}_{-1} \\
 & \quad + \dfrac{\pi}{2} W^h(v|_{[x_{\rm max}, 1]}) - 2 \pi \Bigm[ \dfrac{v'(x)}{\sqrt{1+v'(x)^2}} \Bigm]^{1}_{x_{\rm max}} \\
 & \ge 2 \pi \left( \dfrac{v'(x_{\rm max})}{\sqrt{1+ v'(x_{\rm max})^2}} - \dfrac{v'(-x_{\rm max})}{\sqrt{1+ v'(-x_{\rm max})^2}} \right),  
\end{align*}
This together with \eqref{eq:3.9} implies that 
\begin{align*}
W(v) \ge 4 \pi \dfrac{K_1}{\sqrt{1+K_1^2}}. 
\end{align*}
Then we find $K_1 \le K$, as in the first case. 

We prove \eqref{eq:3.6}. 
Thanks to \eqref{eq:3.5}, we observe that for $x\in [-1,0]$
\begin{align*}
v(x) = \alpha + \int^x_{-1} v'(\xi) \, d \xi \le \alpha + \int^0_{-1} |v'(\xi)| \, d \xi \le \alpha +  K. 
\end{align*}
Since $v$ is even, the estimate from above follows.

For the estimate from below, we let $x_{\rm min} \in [0,1]$ be such that 
\begin{align*}
v(x_{\rm min}) = \min_{x \in [-1,1]} v(x) =: v_{\rm min} > 0. 
\end{align*}
Then, for any $x \in [-1, x_{\rm min}]$, we have 
\begin{align} \label{eq:3.10}
v(x) = v_{\rm min} - \int^{x_{\rm min}}_x v'(\xi) \, d \xi \le v_{\rm min} + K(x_{\rm min}-x).
\end{align}
Thanks to $v'(-1)=v'(1)=0$, we obtain 
\begin{equation} 
\label{eq:3.11}
\begin{aligned}
W(v) &= \dfrac{\pi}{2} W^h(v) \\
     &= \dfrac{\pi}{2} \int^1_{-1} \dfrac{v (v'')^2}{(1+(v')^2)^{5/2}} \, dx + \pi \int^1_{-1} \dfrac{v''}{(1+(v')^2)^{3/2}} \, dx \\
     & \qquad + \dfrac{\pi}{2} \int^1_{-1} \dfrac{1}{v \sqrt{1+(v')^2}} \, dx \\
     &\ge \pi \Bigm[ \dfrac{v'(x)}{\sqrt{1+ v'(x)^2}} \Bigm]^1_{-1} + \dfrac{\pi}{2} \int^1_{-1} \dfrac{1}{v \sqrt{1+(v')^2}} \, dx \\
     &= \dfrac{\pi}{2} \int^1_{-1} \dfrac{1}{v \sqrt{1+(v')^2}} \, dx. 
\end{aligned}
\end{equation}
Combining \eqref{eq:3.11} with \eqref{eq:3.5} and \eqref{eq:3.10}, we deduce that 
\begin{equation} \label{eq:3.12}
\begin{split}
W(v) &\ge \dfrac{\pi}{2} \dfrac{1}{\sqrt{1+K^2}} \int^{x_{\rm min}}_{x_{\rm min}-1} \dfrac{1}{v}\, dx \\
     &\ge \dfrac{\pi}{2} \dfrac{1}{\sqrt{1+K^2}} \int^{x_{\rm min}}_{x_{\rm min}-1} \dfrac{1}{v_{\rm min} + K(x_{\rm min}-x)}\, dx \\
     &\ge \dfrac{\pi}{2} \dfrac{1}{\sqrt{1+K^2}} \int^{1}_{0} \dfrac{1}{v_{\rm min} + K \xi}\, d\xi 
      =  \dfrac{\pi}{2K\sqrt{1+K^2}} \log{\dfrac{v_{\rm min} + K}{v_{\rm min}}}.
\end{split}
\end{equation}
By a direct calculation, we observe from \eqref{eq:3.12} that 
\begin{align*}
v_{\rm min} \ge M. 
\end{align*}
The proof is complete. 
\end{proof}

We may now prove our first existence result for surfaces of revolution.

\begin{proof}[Proof of Theorem \ref{theorem:1.3}]
Let 
\begin{align*}
\tilde{M}_\alpha(\psi) := \frac{1}{2}(4 \pi + M_\alpha(\psi)) \in ( M_\alpha(\psi), 4 \pi),  
\end{align*}
and consider a minimising sequence $\{ v_k \}_{k \in \mathbb{N}} \subset N_\alpha(\psi)$. 
We may assume that 
\begin{align} \label{eq:3.13}
M_\alpha (\psi) \le W(v_k) \le \tilde{M}_\alpha(\psi). 
\end{align}
By Lemma \ref{theorem:3.2} we see that $\{ v_k \}_{k \in \mathbb{N}}$ is uniformly 
$C^1$-bounded, to be more precisely, 
\begin{gather*}
|v'_k(x)| \le \tilde{K} := \dfrac{\tilde{M}_\alpha(\psi)}{4 \pi} \dfrac{1}{\sqrt{1-\left( \tfrac{\tilde{M}_\alpha(\psi)}{4\pi} \right)^2}}, \\
\alpha +  \tilde{K} \ge v_k(x) \ge \tilde{M}:= \dfrac{\tilde{K}}{\exp(\frac{2 \tilde{K}}{\pi}\sqrt{1+ \tilde{K}^2} \tilde{M}_\alpha(\psi) )-1}, 
\end{gather*} 
for all $x \in [-1,1]$ and $k \in \mathbb{N}$. This together with \eqref{eq:3.13} implies that $\{ v_k \}_{k \in \mathbb{N}}$ is uniformly bounded in $H^2$. 
Then, analogously to the proof of Theorem~\ref{theorem:1.1}, we find $u \in N_\alpha(\psi)$ as the limit of $\{ v_k \}_{k \in \mathbb{N}}$ such that 
\begin{align*}
W(u) = \min_{v \in N_\alpha(\psi)} W(v). 
\end{align*}
In particular, the minimiser $u$ satisfies $u(x) \ge \tilde{M}>0$ for all $x \in [-1, 1]$. The proof is complete. 
\end{proof}

\begin{remark}\label{theorem:3.3}
{\rm 
For which $\alpha$ do we easily obtain interesting and admissible obstacles? 
We  construct a function $v_\alpha \in N_\alpha$ with $W(v_\alpha) < 4\pi$. Any $\psi\in C^0([-1,1];(0,\infty))$ with $\psi(\pm1)>\alpha$  
and $\psi\ge v_\alpha $ would then be an admissible obstacle and $v_\alpha\in N_\alpha (\psi)$.

To this end, let $b_0>0$ denote the solution of the equation $b_0\cdot \tanh(b_0)=1$. One sees
that the space above and including  the graph of $x\mapsto \left| \frac{\cosh(b_0)}{b_0}\cdot x\right| $ coincides
with the set $\{(x,\cosh(bx)/b)\, | \, x\in \mathbb{R},\, b>0\} \cup \{(0,0)\}$. The smallest circle around $(1,0)$
which intersects this set has the radius 
$$
\alpha_0:=\sqrt{1-\frac{1}{1+\left(\frac{\cosh(b_0)}{b_0}\right)^2}}=0.8335565596\ldots,\ \mbox{\ where\ }
\frac{\cosh(b_0)}{b_0}=1.508879561\ldots.
$$
One may observe that $\alpha_0$ is close to, but slightly below the number $2/c_0$ which plays an important role in Section~\ref{section:2}.

In what follows we assume  first that $\alpha \ge \alpha_0$. For $\alpha\ge 1$ and $\alpha=\alpha_0$ one finds one 
catenoid which touches the circles around $(1,0)$ and $(-1,0)$ with  radii $\alpha$.
For $\alpha\in (\alpha_0,1)$ one has even  two such catenoids. 

\begin{figure}[h] 
\centering 
\resizebox{10cm}{!}{\includegraphics{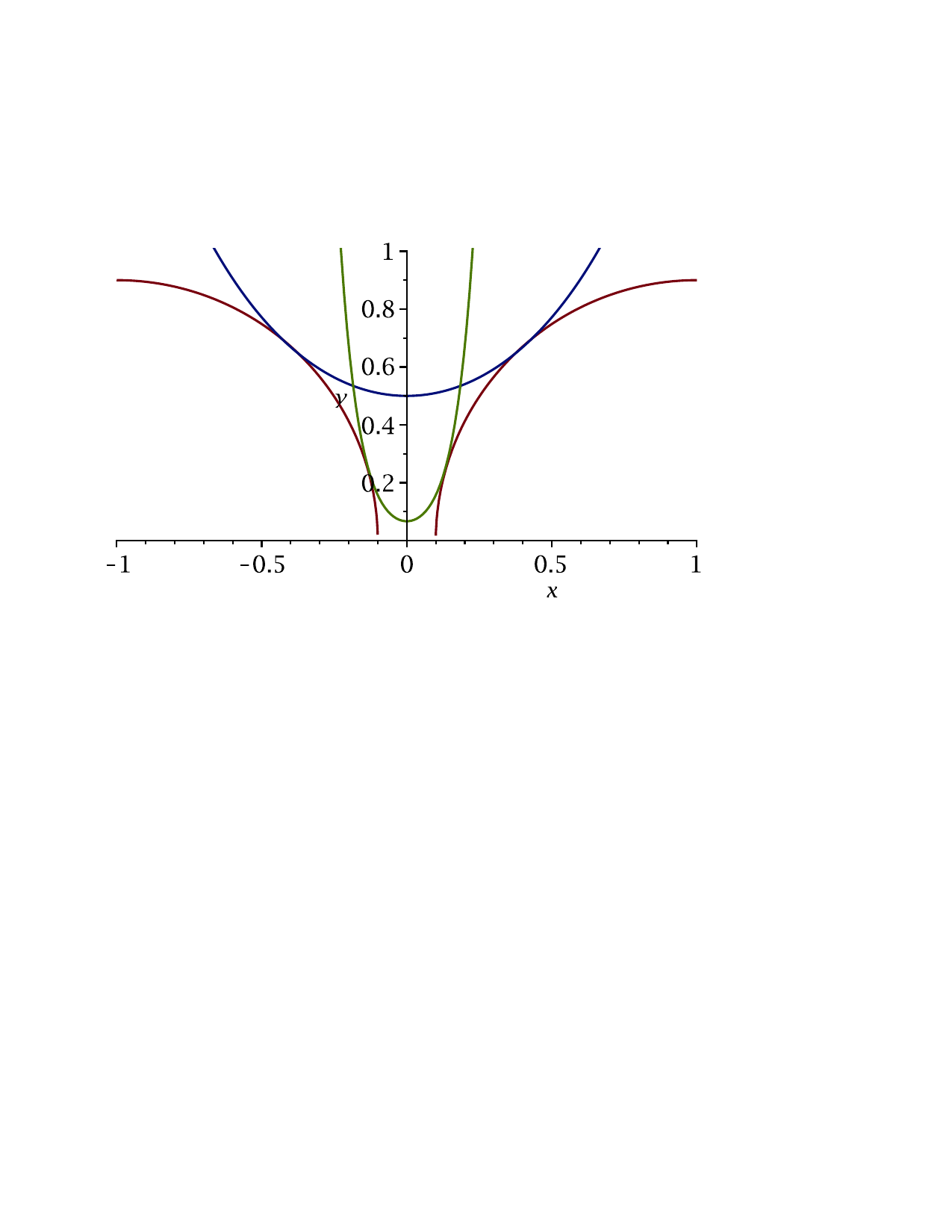}}\vspace{-7cm}
\caption{Any smooth function built from the brown circular arcs and 
a part either of the green or of the blue $\cosh$ 
and slightly enlarged near $\pm 1$ is an admissible obstacle for $\alpha=0.9$.}
\label{figure:3} 
\end{figure} 

To be more explicit,
we put
$$
v_\alpha(x):=\left\{
\begin{array}{ll}
       \sqrt{\alpha^2-(1+x)^2} \quad & \quad \mbox{\ for\ } -1\le x\le -x_b,\\[2mm]
       \frac{1}{b} \cosh(bx) \quad & \quad \mbox{\ for\ }  -x_b\le x\le x_b,\\[2mm]
       \sqrt{\alpha^2-(1-x)^2} \quad & \quad \mbox{\ for\ } x_b\le x\le 1,\\[2mm]
      \end{array}
\right.
$$
where $b$ and $x_b$ are chosen such that $v_\alpha \in H^2((-1,1);(0,\infty))$, i.e.:
\begin{equation*}
 \left\{ \begin{array}{rl}
        \displaystyle  \frac{1}{b} \cosh(bx_b)& =\displaystyle \sqrt{\alpha^2-(1-x_b)^2}\\[2mm]
        \displaystyle \sinh (bx_b) & =\displaystyle  \frac{1-x_b}{\sqrt{\alpha^2-(1-x_b)^2}}.
         \end{array}
 \right. 
\end{equation*}
After some elementary calculations one finds that this is equivalent to solving first
\begin{equation*}
 \cosh(b-\sqrt{(\alpha b)^2-(\alpha b)})=\sqrt{\alpha b}
\end{equation*}
for $b$ and putting then:
\begin{equation*}
 x_b:= 1-\sqrt{\alpha^2 - \frac{\alpha}{b}}.
\end{equation*}
As in the proof of Lemma~\ref{theorem:3.2} one has that
\begin{equation}\label{eq:3.12.a}
W(v_\alpha)=4\pi \tanh(bx_b) <4\pi.
\end{equation}

Some numerically calculated examples:
$$% All the calculations for this and the following tables are contained in possible_pbstacles.mw
\begin{array}{|c|c|c|c|} \hline 
 \alpha & b & x_b & v_\alpha (0)=1/b\\ \hline
 5 & 0.2020339962\ldots & 0.4983130059\ldots & 4.949662031\ldots \\ \hline
 2 & 0.5349618217\ldots & 0.4887119667\ldots & 1.869292274\ldots \\ \hline
 1 & 1.467396505\ldots & 0.4356234114\ldots & 0.6814790662\ldots \\ \hline
  0.99 &  1.502200407\ldots & 0.4333724672\ldots & 0.6656901405\ldots \\ \hline
 0.99 &  304.6450597\ldots & 0.0116426170\ldots &  0.003282508507\ldots \\ \hline
 0.9 &  1.986626006\ldots & 0.4025298387\ldots & 0.5033660070\ldots \\ \hline
 0.9 &  14.46598282\ldots & 0.1352543279\ldots &  0.06912769166\ldots \\ \hline
  0.84 &  3.077899286\ldots & 0.3422108342\ldots & 0.3248969206\ldots \\ \hline
 0.84 &  5.266858981\ldots & 0.2610059859\ldots &  0.1898664847\ldots \\ \hline
\end{array}
$$
As mentioned before, for $\alpha\in (\alpha_0,1)$ we find two catenoids which touch the circles around $(1,0)$ and $(-1,0)$ with 
radii $\alpha$. For $\alpha=\alpha_0$ and $\alpha \ge 1$ exactly one catenoid fits in like this.
For $\alpha\searrow \alpha_0$ the two constructed branches of ``admissible obstacles'' (up to being slightly enlarged near $\pm 1$) converge to 
the same one while for $\alpha \nearrow 1$ one of these branches persists (and becomes less and
less interesting for increasing $\alpha$ and the other one becomes singular (convergence to two 
quarter circles). See Figures~\ref{figure:4}--\ref{figure:6}.

%\vspace{-2cm}
\begin{figure}[h] 
\centering 
\includegraphics[width=.45\textwidth]{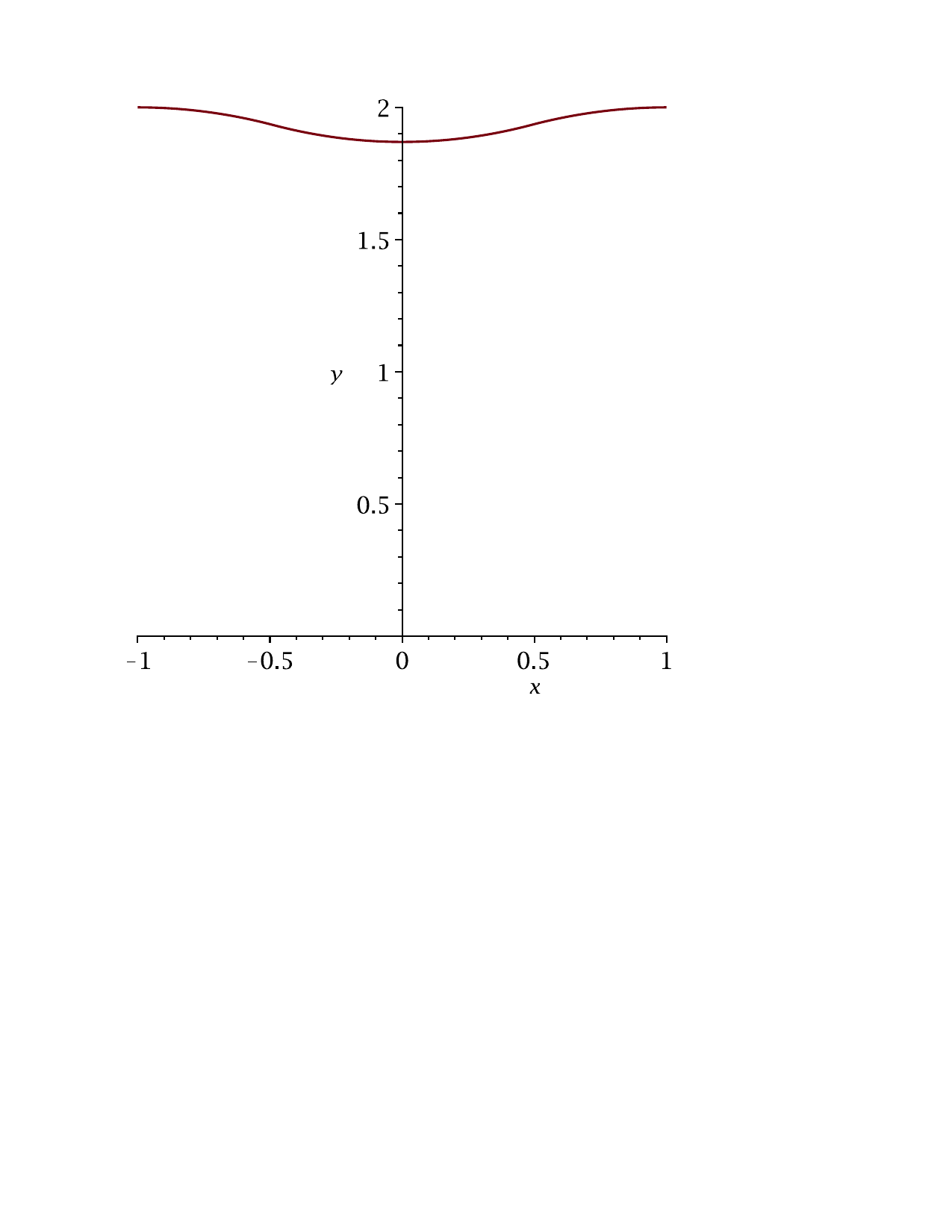}\hspace{0.3cm}
\includegraphics[width=.45\textwidth]{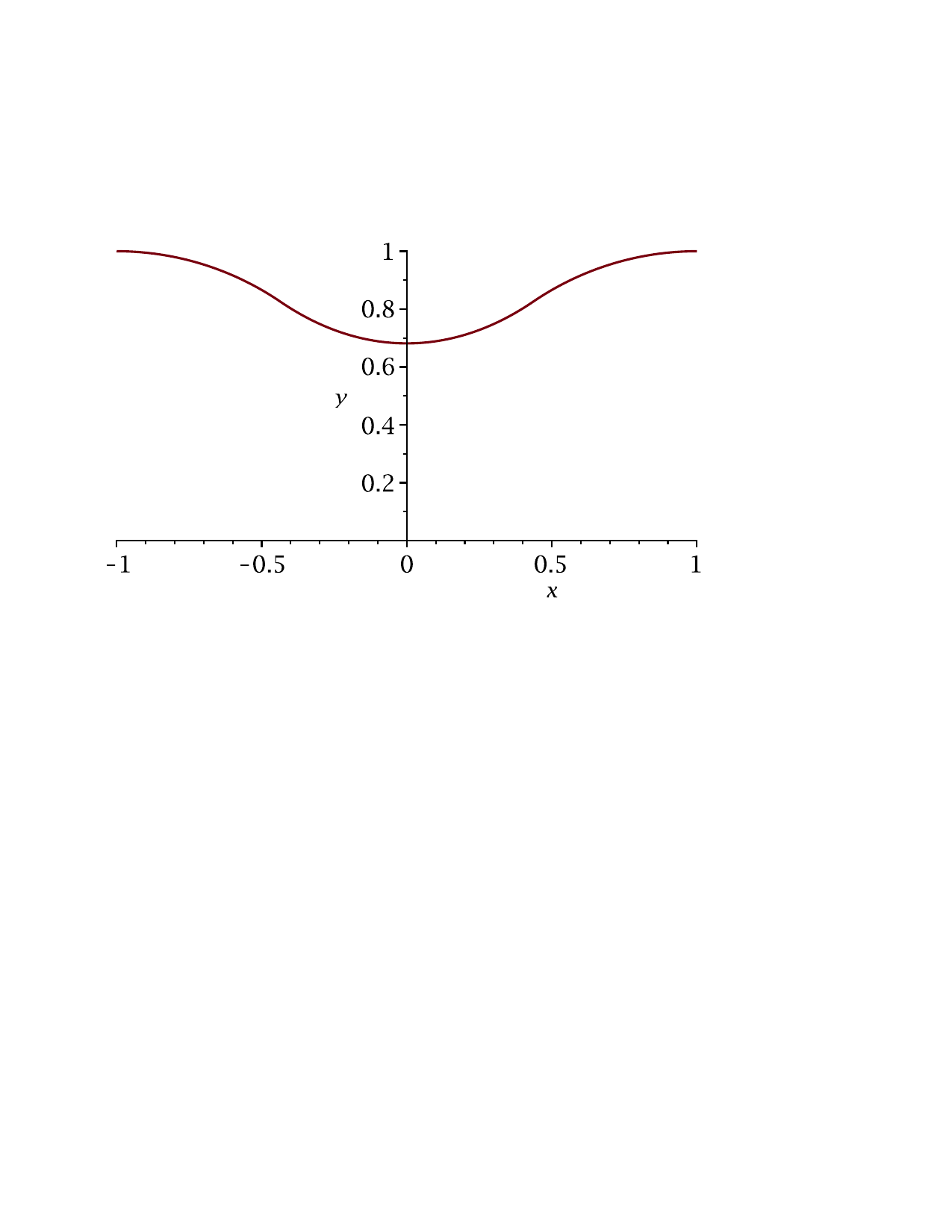}\vspace{-3cm}
\caption{Admissible obstacles (up to enlarging near $\pm 1$)
	 for $\alpha=2$ (left) and $\alpha=1$ (right).}
\label{figure:4} 
\end{figure} 

%\vspace{-2cm}
\begin{figure}[h] 
\centering 
\includegraphics[width=.45\textwidth]{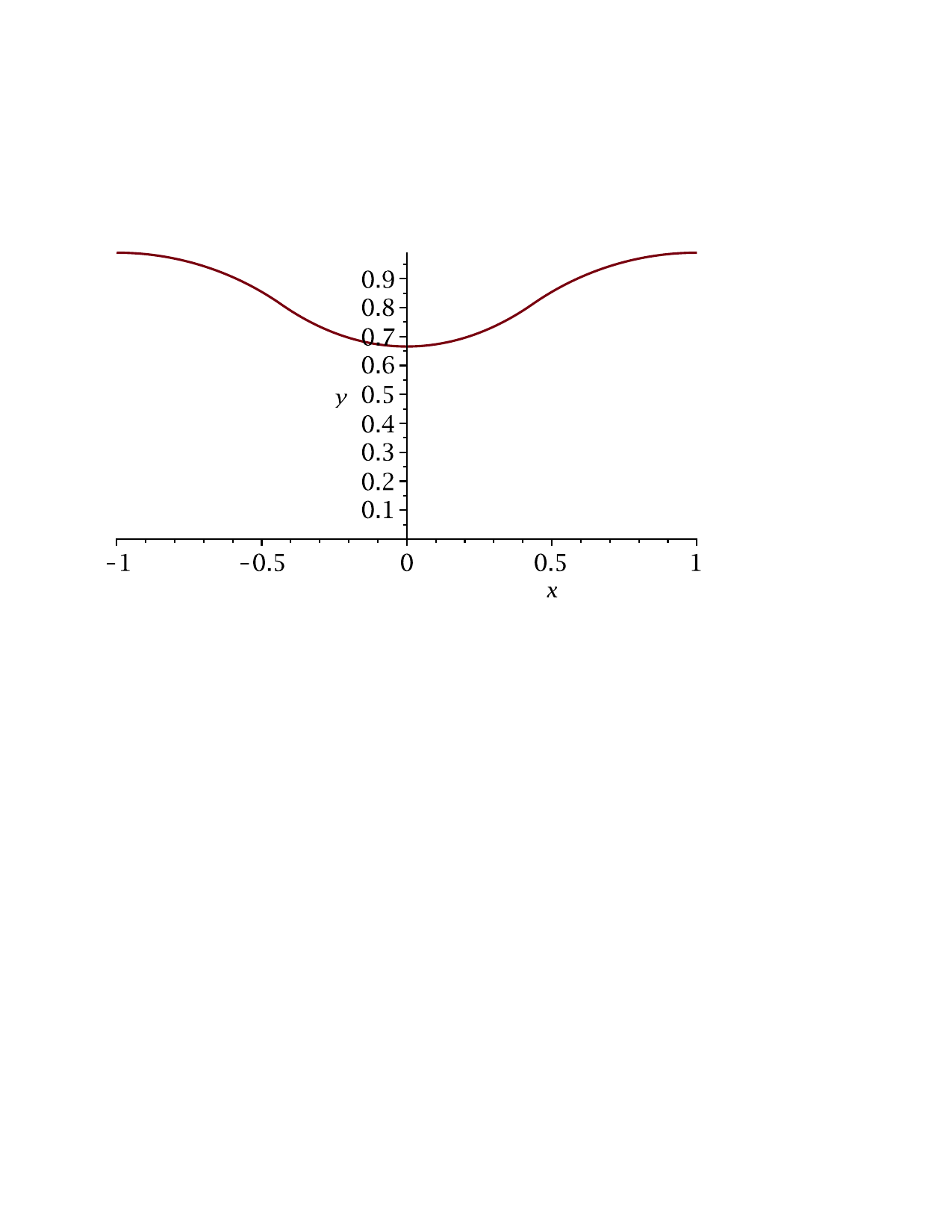}\hspace{0.3cm}
\includegraphics[width=.45\textwidth]{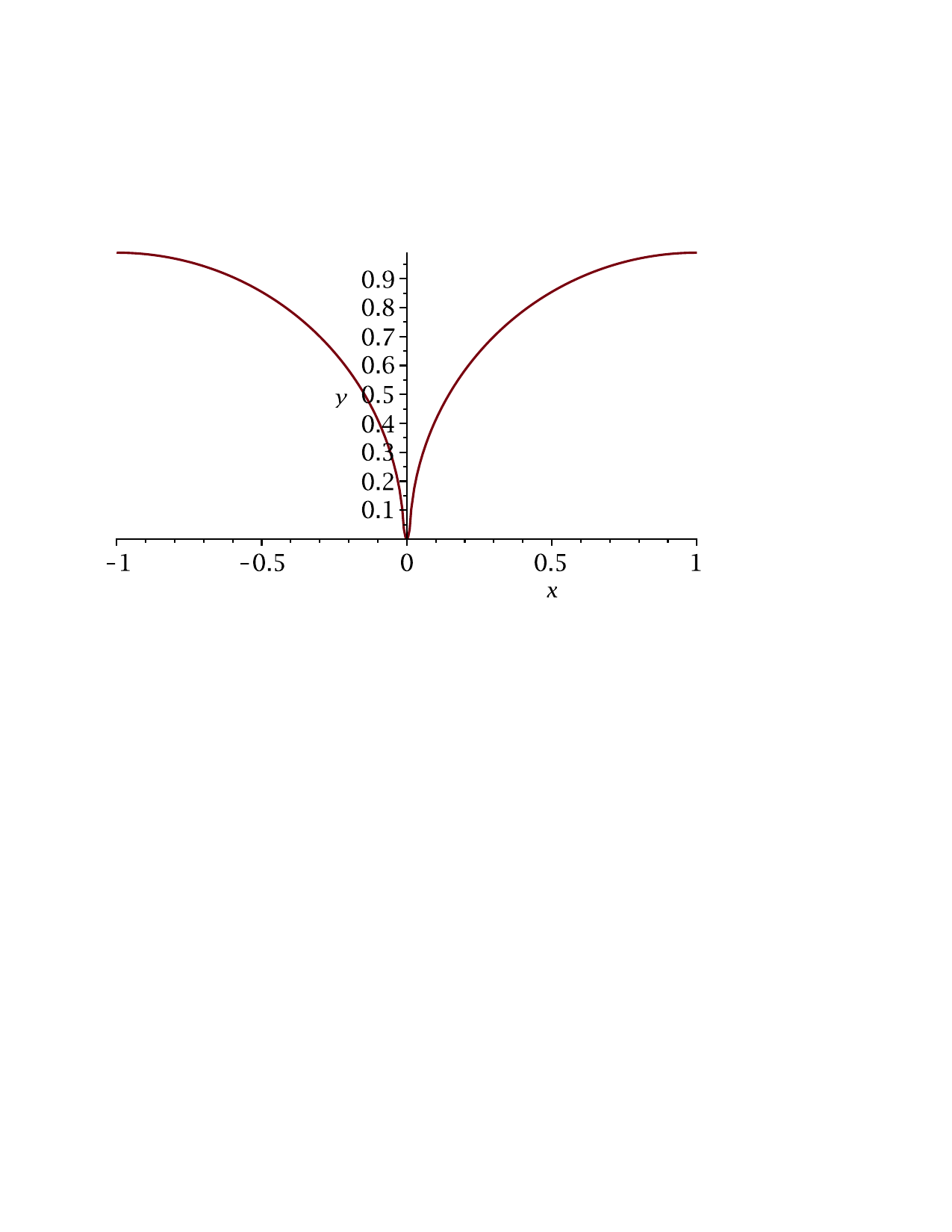}\vspace{-3.5cm}
\caption{Two admissible obstacles  (up to enlarging near $\pm 1$) for $\alpha=0.99$.}
\label{figure:5} 
\end{figure} 

\begin{figure}[h] 
\centering 
\includegraphics[width=.45\textwidth]{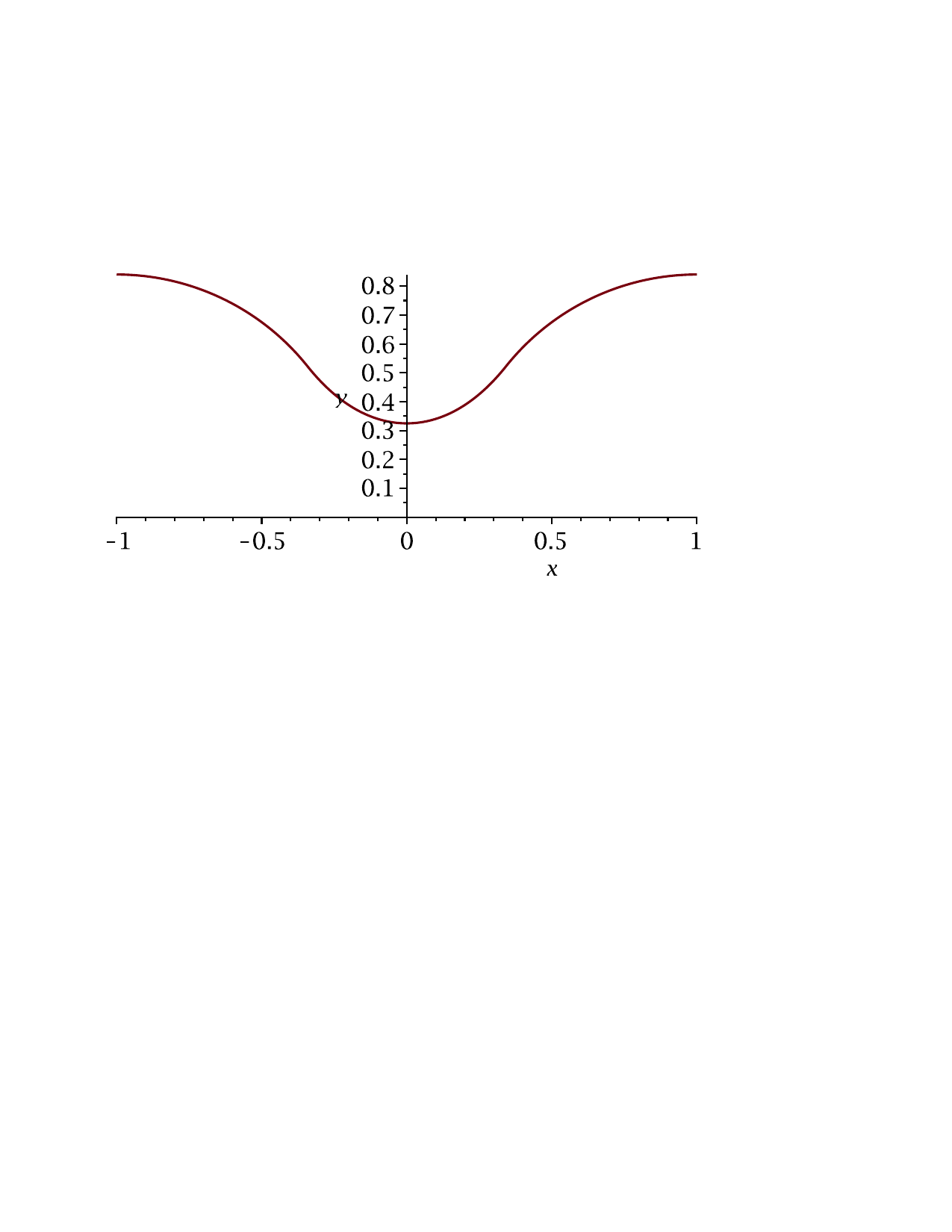}\hspace{0.3cm}
\includegraphics[width=.45\textwidth]{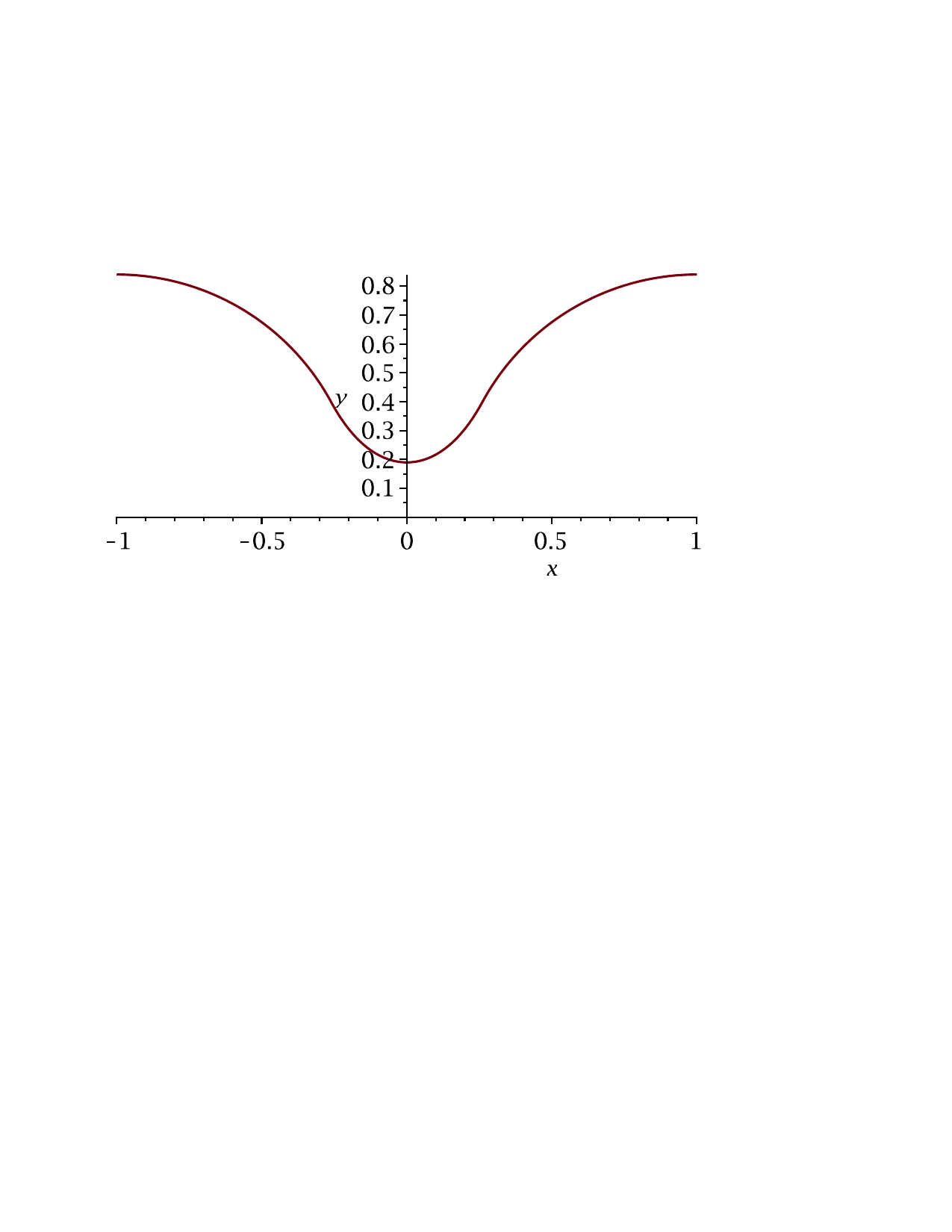}\vspace{-3.5cm}
\caption{Two admissible obstacles  (up to enlarging near $\pm 1$) for $\alpha=0.84$.}
\label{figure:6} 
\end{figure} 

We consider now the case  $\alpha\in (0,\alpha_0)$. Here, the previous  
construction has to be modified as follows: One 
\emph{chooses} suitably (in a sense which will be explained below) $x_b\in (1-\alpha,1)$ such that the catenoids
$$
x\mapsto \frac{\cosh(b(|x|-x_0))}{b}
$$
are in $\pm x_b$ tangential to the circles
$$
x\mapsto \sqrt{\alpha^2-(1-|x|)^2)}.
$$
The parameters $b$ and $x_0$ are given by the equations
\begin{equation*}
b=\frac{\sqrt{1+\beta^2}}{\gamma},\quad \sinh(b(x_b-x_0))=\beta, 
\end{equation*}
where
\begin{equation*}
\gamma:=\sqrt{\alpha^2-(1-x_b)^2}, \quad 
\beta:= \frac{1-x_b}{\sqrt{\alpha^2-(1-x_b)^2}}. 
\end{equation*}
One puts then a circle around the origin which touches these catenoids. Its radius $r$ is 
given by 
$$
r:=\sqrt{x^2_{\min} + \frac{\cosh(b(x_0-x_{\min}))^2}{b^2} }
$$
where $x_{\min}\in (0,x_0)$ is defined as the solution of the equation
$$
b x_{\min}=\cosh(b(x_0-x_{\min}))\sinh(b(x_0-x_{\min})).
$$
One then defines 
$$
v_\alpha(x):=\left\{
\begin{array}{ll}
       \sqrt{r^2-x^2} \quad & \quad \mbox{\ for\ } | x | \le x_{\min},\\[2mm]
       \frac{1}{b} \cosh(b(|x|-x_0)) \quad & \quad \mbox{\ for\ }  x_{\min}\le |x|\le x_b,\\[2mm]
       \sqrt{\alpha^2-(1-|x|)^2} \quad & \quad \mbox{\ for\ } x_b\le |x|\le 1,\\[2mm]
      \end{array}
\right.
$$
Finally, in order that $v_\alpha$ becomes an admissible obstacle  (up to enlarging near $\pm 1$), $x_b$ has to be chosen such that 
$$
W(v_\alpha)= 4\pi \left(  \tanh(b(x_b-x_0))+\tanh(b(x_0- x_{\min}))\right)
\stackrel{!}{<}4\pi.
$$%Change: I added this formula.
This is certainly possible because for $x_b\nearrow 1$ the function $v_\alpha$ converges to
the prototype function used in \cite{DFGS}. There, for any $\alpha>0$ it was shown in Proposition 6.6 that 
this function has Willmore energy strictly below $4\pi$. 

On the other hand, for $\alpha \nearrow \alpha_0$, $x_b$ may be chosen such that $x_0$ is still very close 
to $0$ and the admissible obstacles (up to enlarging \ldots) still resemble those from the case $\alpha \searrow \alpha_0$. Roughly speaking the previous construction for 
$\alpha \nearrow \alpha_0$ is as follows: One takes an admissible obstacle
for $\alpha\ge \alpha_0$, $\alpha$ close to $\alpha_0$, scales this down around  $-1$ and $1$ by a factor $<1$ but close to $1$ and fits in a small part of a suitable halfcircle around $0$.

Some numerically calculated parameters for admissible obstacles (up to enlarging \ldots),
 see also Figure~\ref{figure:7}:
$$
\begin{array}{|c|c|c|c|c|c|} \hline 
 \alpha & x_b & b & x_0 & x_{\min} & r\\ \hline
 0.7& 0.5  & 0.4898979486\ldots & 0.1928412335\ldots & 0.09881364931\ldots &  0.3692969430\ldots\\ \hline
0.5& 0.79 & 2.428363283\ldots & 0.6056404249\ldots & 0.3419392371\ldots & 0.6050456522\ldots \\ \hline
0.1& 0.995 & 10.02506266\ldots & 0.9900083375\ldots & 0.8159959886\ldots &  0.8673937881\ldots\\ \hline
\end{array}
$$

\begin{figure}[h] 
\centering 
\includegraphics[width=.45\textwidth]{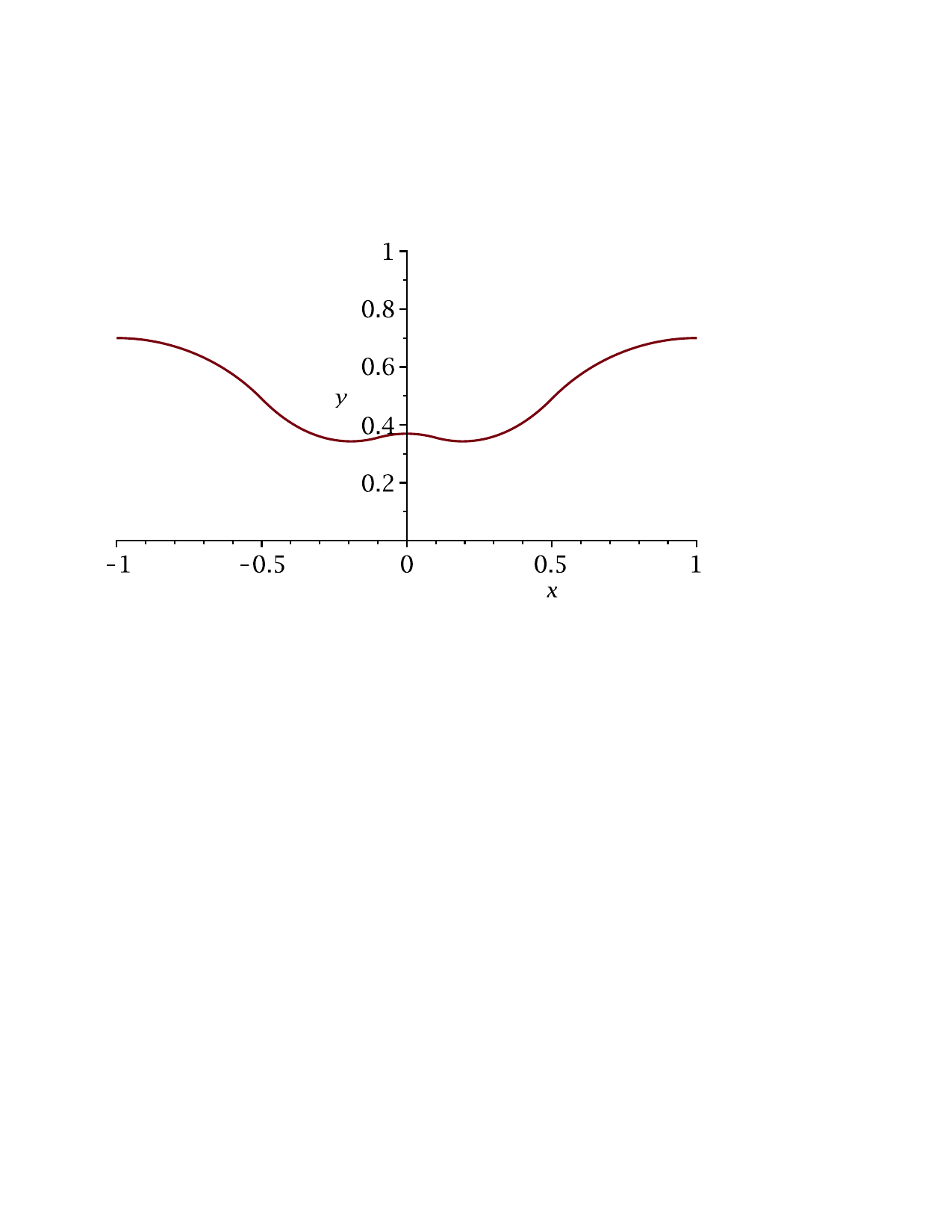}\hspace{0.3cm}
\includegraphics[width=.45\textwidth]{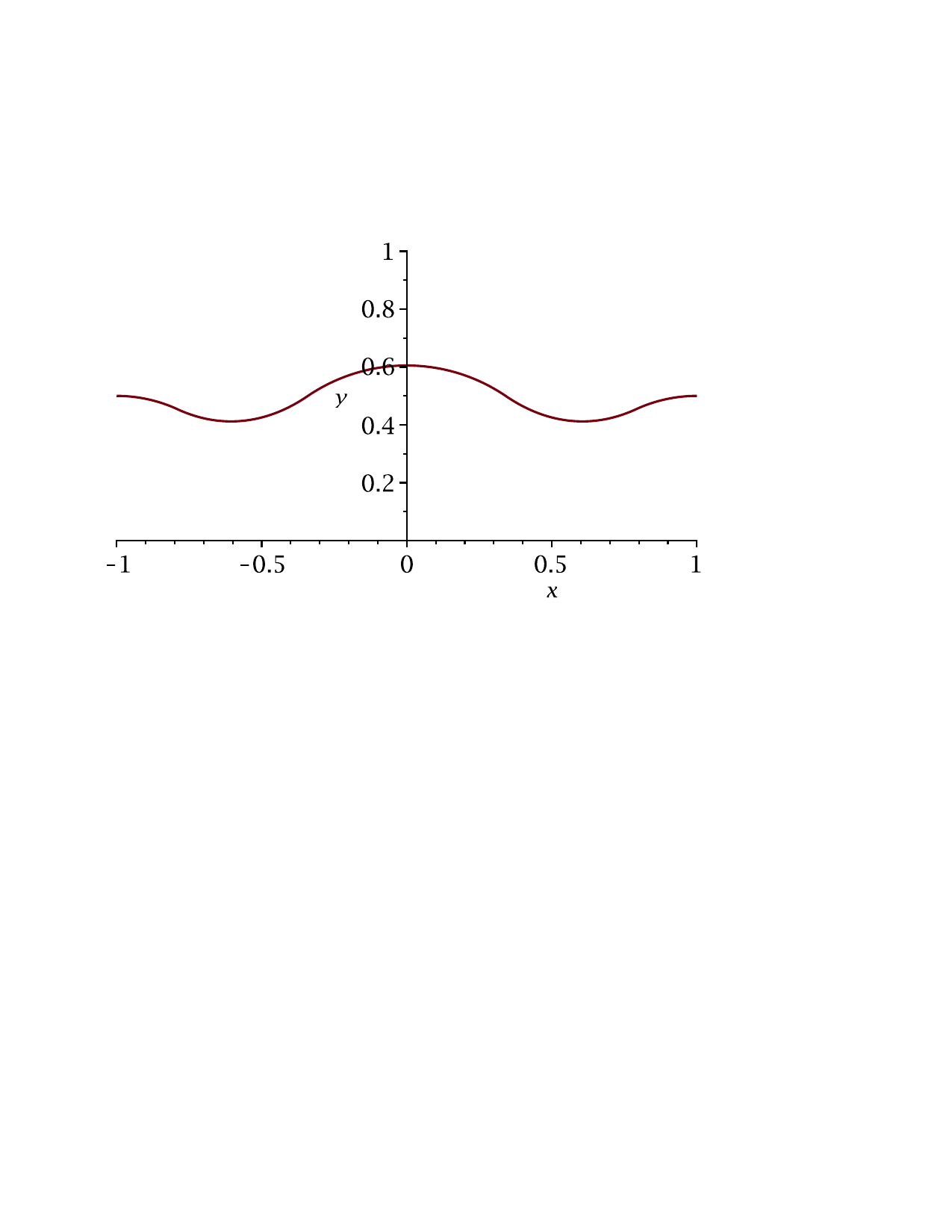}\vspace{-4cm}
\caption{Admissible obstacles (up to enlarging \ldots) for $\alpha=0.7$ (left) and $\alpha=0.5$ (right).}
\label{figure:7} 
\end{figure} 

%For $\alpha\in (0,\alpha_0)$ it is not equally simple to construct admissible 
%obstacles which yield minimisers which are stricly below the free minimisers.
%However, one may start with the free minimisers (where no obstacle is present);
%see \cite[Figure 17]{DFGS}. They all have Willmore energy strictly below $4\pi$,
%see \cite[Proposition 6.6]{DFGS} and observe \eqref{eq:3.3}. At $x=0$ ony may 
%push this minimiser still a bit downwards while the Willmore energy stays strictly below $4\pi$.
%This yields an admissible obstacle, which looks qualitatively as in Figure~\ref{figure:7}.

%\vspace{-1cm}
%\begin{figure}[h] 
%\centering 
%\resizebox{12cm}{!}{\includegraphics{v_010.pdf}}\vspace{-1cm}
%\caption{Qualitative shape of an admissible obstacle for $\alpha=0.1$: $\cosh$, circle, $\cosh$, circle, $\cosh$}
%\label{figure:7} 
%\end{figure} 

}
\end{remark}

The previous examples are in our opinion quite interesting for $\alpha$ in a neighbourhood of $1$. But giving a look at Figure~\ref{figure:4} and the table just below \eqref{eq:3.12.a} shows that already
for $\alpha=2$ the constructed ``admissible obstacle'' resembles somehow a straight line. Indeed, it is not difficult to see that $\lim_{\alpha\nearrow \infty}\| \alpha -v_\alpha\|_{C^0([-1,1])}=0$ and $\lim_{\alpha\nearrow \infty}W (v_\alpha )=0$.

The following result shows that the Willmore energy of ``interesting'' examples increases of the order $\alpha$ when $\alpha\nearrow \infty$. On the other hand it can be interpreted in that way that one has suitable a-priori-bounds in $C^1([-1,1],(0,\infty))$ for functions whose energy is below $\alpha$ times a suitable factor. The underlying observation is that in this regime the second term in $W(\,\cdot\,)$ in \eqref{eq:3.2} becomes small compared to the first one and may be neglected. Then, up to a factor we are back in the one-dimensional situation.
In what follows, the function $G$ defined in \eqref{eq:1.1.new}
will play again an important role.
%%%%%%%%%%%%%%%%%%%%%%%%%%%%%%%%%%
\begin{proposition}\label{theorem:3.4}
Let $S\in (0,\alpha)$. Assume that $u\in N_\alpha$ is such that 
\begin{equation}\label{eq:3.14}
\max_{x\in [-1,1]} |u'(x)| \ge S.
\end{equation}
Then
\begin{equation}\label{eq:3.15}
W(u) > \pi (\alpha -S)G(S)^2.
\end{equation}
Equivalently we may conclude for $u\in N_\alpha$
\begin{equation}\label{eq:3.16}
W(u) \le  \pi (\alpha -S)G(S)^2\quad \Rightarrow \quad 
\max_{x\in [-1,1]} |u'(x)| < S.
\end{equation}
\end{proposition}
%%%%%%%%%%%%%%%%%%%%%%%%%%%%%%%%%%
\begin{proof} We consider $u\in N_\alpha$ which satisfies \eqref{eq:3.14}.
	Choose $x_0\in (-1,0)$ minimal such that 
	$|u'(x_0)|=S$, i.e. $\forall x\in [-1,x_0): \quad |u'(x)|<S $
	and   $\forall x\in [-1,x_0]: \quad u(x)>\alpha - S > 0 $.
	Neglecting the second term in $W(\, \cdot\,)$ we find
	\begin{align*}
	\frac{2}{\pi} W(u)&\ge \int^1_{-1} \frac{u(x) u''(x)^2}{(1+u'(x)^2)^{5/2}}\, dx
	>2(\alpha-S)  \int^{x_0}_{-1} \frac{ u''(x)^2}{(1+u'(x)^2)^{5/2}}\, dx\\
	&\ge 2(\alpha-S) \underbrace{\frac{1}{1+x_0}}_{\ge 1} \left(\int^{x_0}_{-1} \frac{ u''(x)}{(1+u'(x)^2)^{5/4}}\, dx\right)^2\\
	&\ge 2(\alpha-S)\, \left(\int^{u'(x_0)}_{u'(-1)=0} \frac{ 1}{(1+\tau^2)^{5/4}}\, d\tau\right)^2=2 (\alpha -S)G(S)^2
	\end{align*}
	and \eqref{eq:3.15} follows.
\end{proof}
%%%%%%%%%%%%%%%%%%%%%%%%%%%%%%%%%%

This derivative estimate yields our second existence result for surfaces of revolution. 

\begin{proof}[Proof of Theorem \ref{theorem:1.4}]
Minimising sequences obey the bound in \eqref{eq:3.16} for some suitable $S\in (0,\alpha)$ and hence also in \eqref{eq:3.6} with $S$ in place  of $K$.
This shows that minimising sequences satisfy sufficiently strong $C^1$-a-priori-estimates. 
Proceeding similarly as in the proof of Theorem~\ref{theorem:1.3}, we  obtain the assertion in Theorem~\ref{theorem:1.4}. 
\end{proof}

%%%%%%%%%%%%%%%%%%%%%%%%%%%%%%%%%%
\begin{remark}\label{theorem:3.5}
In order to interpret condition~\eqref{eq:1.6} one may observe that already $G(2.1)>1$ 
and that $\sup_{S\in (0,\infty)}G(S)^2=(c_0/2)^2=1.43\ldots$. 
This means that for $\alpha > 2.1$ the right hand side of \eqref{eq:1.6} is estimated from below 
by $(\alpha-2.1)\pi$, i.e. for $\alpha > 6.1$, condition~\eqref{eq:1.6} is weaker than condition~\eqref{eq:D}. 
For $\alpha \nearrow\infty$ one has asymptotically that $\max_{S\in[0,\alpha]}g_\alpha (S) =(c_0/2)^2\alpha =1.43\ldots \cdot \alpha$.
 
 The optimal $S$ for $g_\alpha(\,\cdot\,)$ cannot be calculated explicitly. However,
 analysing also $g_\alpha'(\,\cdot\,)$ shows that this is increasing in $\alpha$ and becomes unbounded for $\alpha \nearrow\infty$. See Figure~\ref{figure:8} for plots of  $g_\alpha(\,\cdot\,)$  for $\alpha = 6.1,\ 10,\ 50$.

 \begin{figure}[h] 
 	\hspace*{-1cm}
 	\includegraphics[width=.5\textwidth]{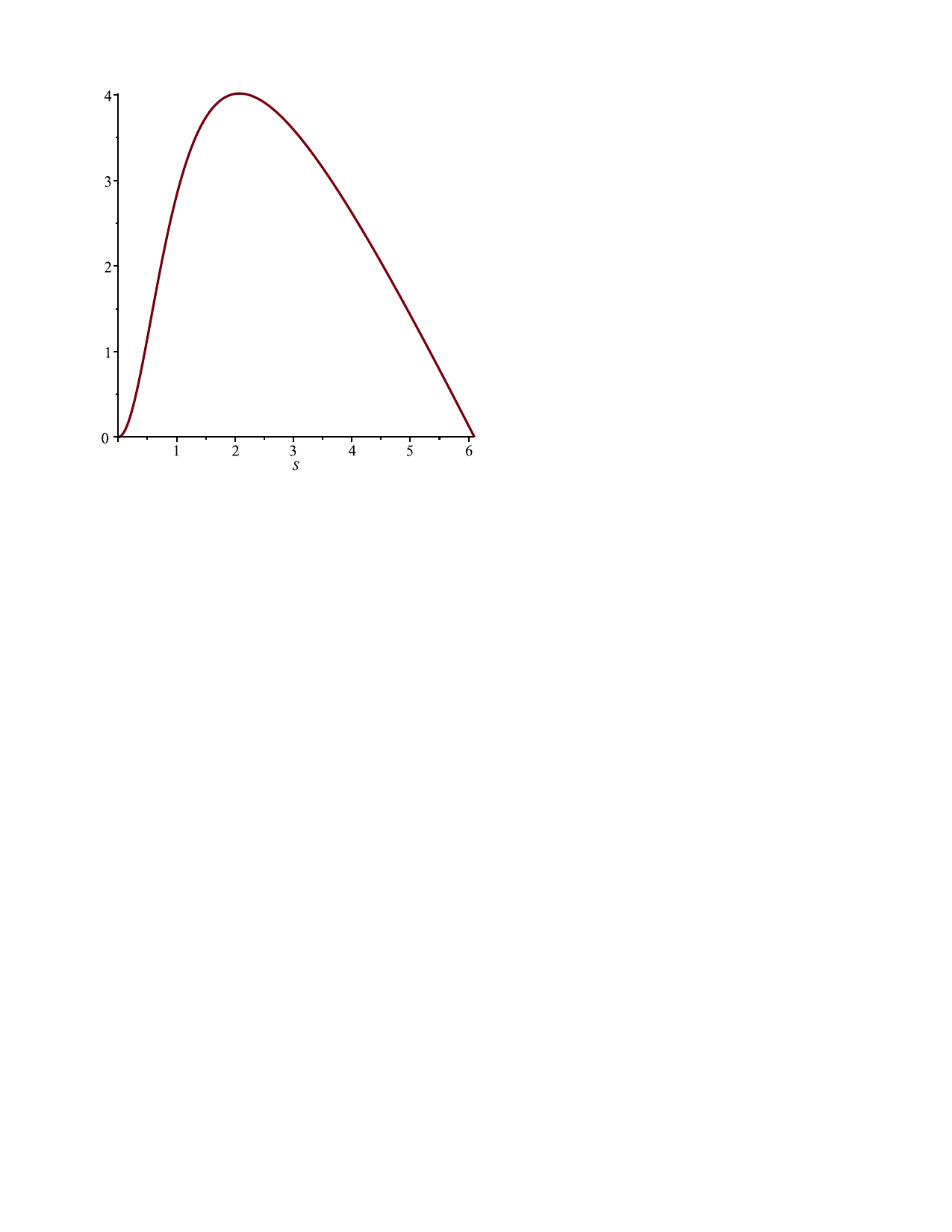}\hspace{-3cm}
 	\includegraphics[width=.5\textwidth]{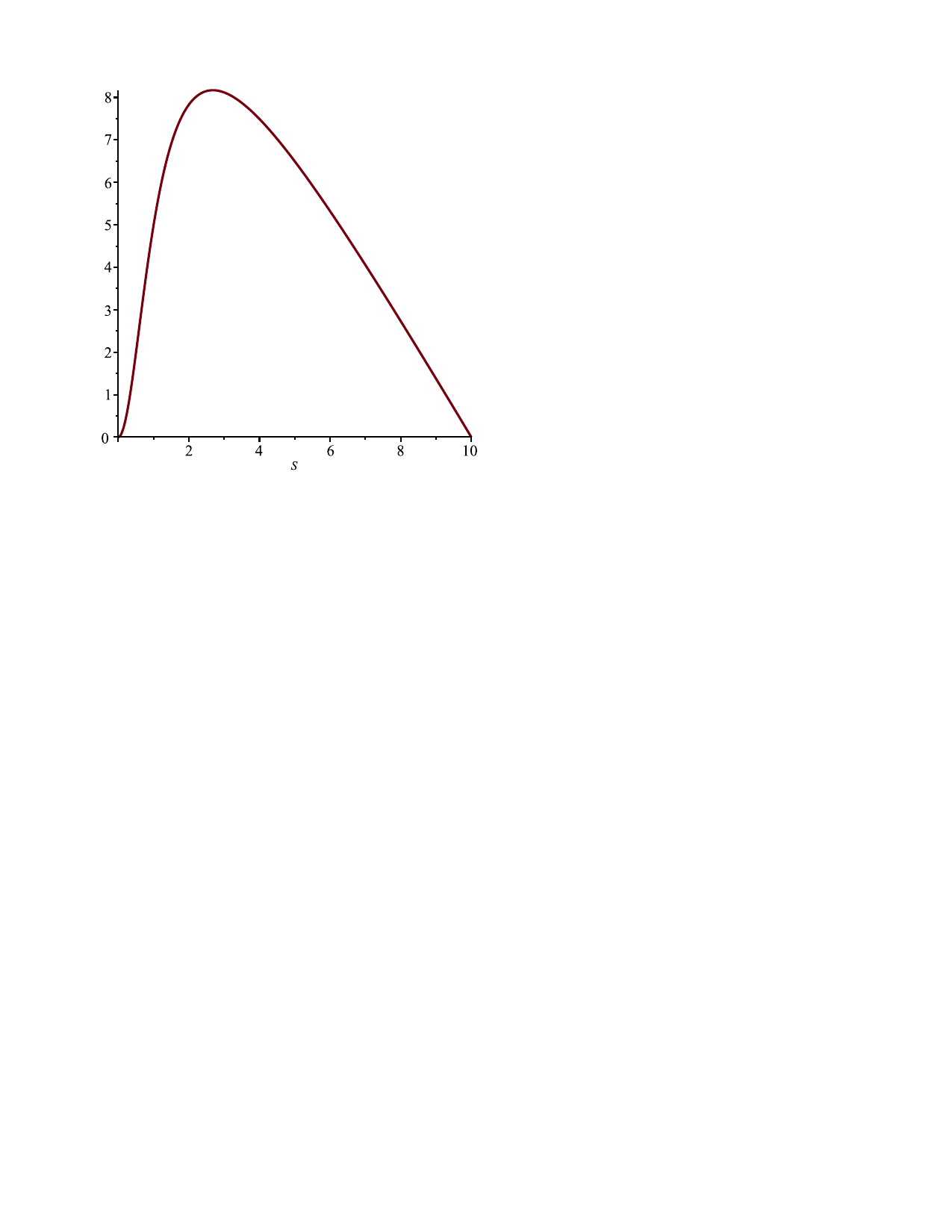}\hspace{-3cm}
 	\includegraphics[width=.5\textwidth]{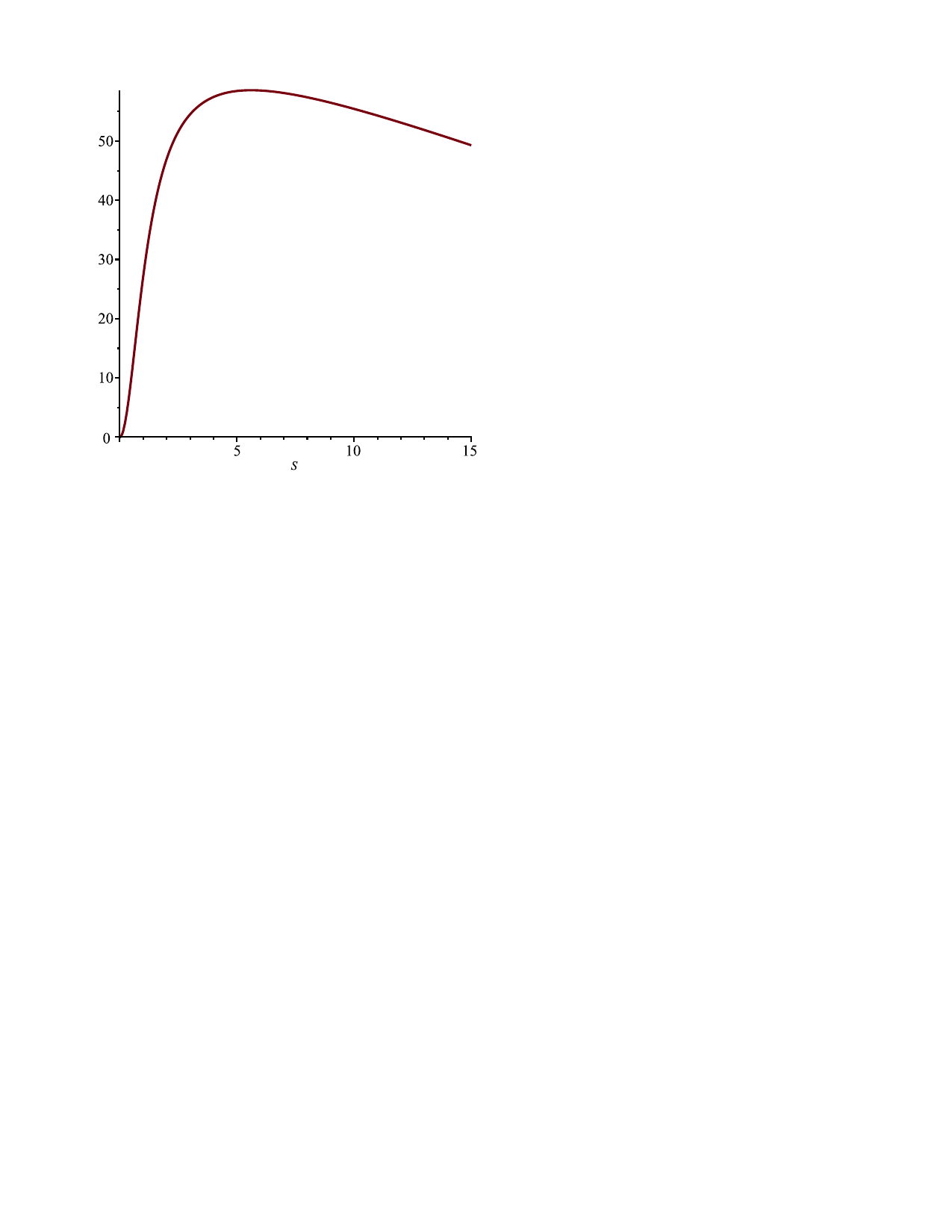}\hspace{-3cm}\vspace{-5cm}
 	\caption{Plots of $g_\alpha(\,\cdot\,)$ for $\alpha = 6.1,\ 10,\ 50$.}
 	\label{figure:8} 
 \end{figure} 
\end{remark}

%%%%%%%%%%%%%%%%%%%%%%%%%%%%%%%%%%
\begin{remark}\label{theorem:3.6}
	How do obstacles look like that satisfy the assumptions of Theorem~\ref{theorem:1.4}?
	Since this result is of interest only for (relatively) large $\alpha$, we shall consider this case here only. 
	Then, as mentioned above, \eqref{eq:3.2} shows that the Willmore  functional resembles, up to the factor $u(x)$, the one-dimensional elatica functional. 
	For this reason we recall the functions $U_c:[-\frac{1}{2}, \frac{3}{2}]\to [-2/c_0,2/c_0]$ from Remark~\ref{theorem:2.4} and define for $\alpha >> 2$ and $0<c<c_0$:
	\begin{equation*}
		u_{\alpha,c}:[-1,1]\to [\alpha-4/c_0,\alpha],\quad 
		u_{\alpha,c}(x):=\alpha -U_c(x+1/2)+U_c(-1/2).
    \end{equation*}	
    According to Remark~\ref{theorem:2.4} the one-dimensional elastic energy of $U_c$ over $[-\frac{1}{2}, \frac{3}{2}]$ is $2c^2$. 
    In view of this we obtain from \eqref{eq:3.2}:
    \begin{align*}
    \dfrac{2}{\pi}W(u_{\alpha,c}) 
    &=  \int^1_{-1} \dfrac{u_{\alpha,c}''(x)^2u_{\alpha,c}(x)}{(1+(u_{\alpha,c}'(x))^2)^{5/2}}\, dx
    + \int^1_{-1} \dfrac{1}{u_{\alpha,c}(x) \sqrt{1+(u_{\alpha,c}'(x))^2}}\, dx\\
    &< \alpha \underbrace{ \int^1_{-1} \dfrac{u_{\alpha,c}''(x)^2}{(1+(u_{\alpha,c}'(x))^2)^{5/2}}\, dx}_{\mbox{the one-dimensional functional}}
    +2\frac{1}{\alpha-\frac{4}{c_0}}\\
    W(u_{\alpha,c}) &< \pi \alpha c^2+\frac{\pi}{\alpha-2}.
    \end{align*}
    In order that $u_{\alpha,c}$ obeys the condition in Theorem~\ref{theorem:1.4} we need to choose $c\in (0,c_0)$ such that 
    \begin{equation}\label{eq:3.17}
     \alpha c^2+\frac{1}{\alpha-2} \stackrel{!}{\le}
    \max_{S\in [0,\alpha]}\left( (\alpha-S)G(S)^2 \right)
    \end{equation}
	is satisfied. This condition cannot be resolved explicitly.
	For $\alpha\approx \infty$ the left hand side of \eqref{eq:3.17} behaves asymptotically like $\alpha c^2$ and the right hand side like $\alpha (c_0/2)^2$. 
	Hence, for $\alpha\approx \infty$, condition~\eqref{eq:3.17} is satisfied if $0<c<c_0/2\approx 1.198$. 
	The following table displays numerically calculated threshold values $c_{\mbox{\scriptsize thre}}$ such that for $0<c<c_{\mbox{\scriptsize thre}}$, 
	slightly enlarged  $u_{\alpha,c}$ yield admissible obstacles. 
	\begin{align*}
	\begin{array}{|c|c|c|} \hline 
	\alpha & \max_{S\in[0,\alpha]}g_\alpha (S) & c_{\mbox{\scriptsize thre}} \\ \hline
	10& 8.170\ldots & 0.896\ldots    \\ \hline
	25 &26.231\ldots &  1.023\ldots    \\ \hline
	50& 58.583\ldots  & 1.082\ldots  \\ \hline
	100& 125.756\ldots  & 1.121\ldots\\ \hline
	\end{array}
	\end{align*}
	In Figure~\ref{figure:9} we display how much the straight line $x\mapsto \alpha $
	may be pushed down, when $c=1$ and $c=c_0/2$ are admissible, respectively. One may observe that for
	$\alpha >25$ the admissible profiles change only a little; in particular $u_{\alpha,1}$
	is always admissible.

	\begin{figure}[h] 
	%	\centering 
		\hspace*{-4cm}
		\includegraphics[width=.8\textwidth]{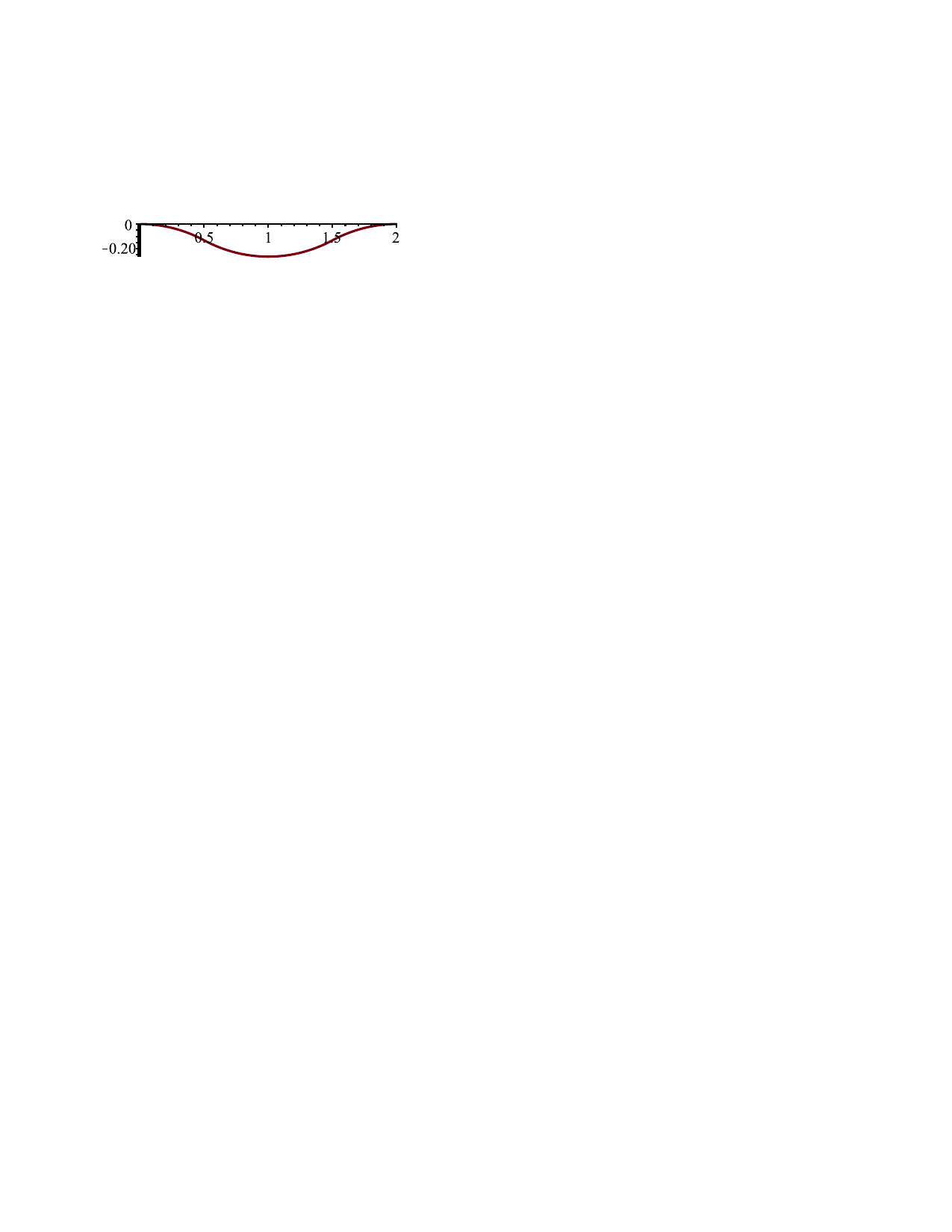}\hspace{-5cm}
		\raisebox{2.2cm}{\includegraphics[width=.65\textwidth]{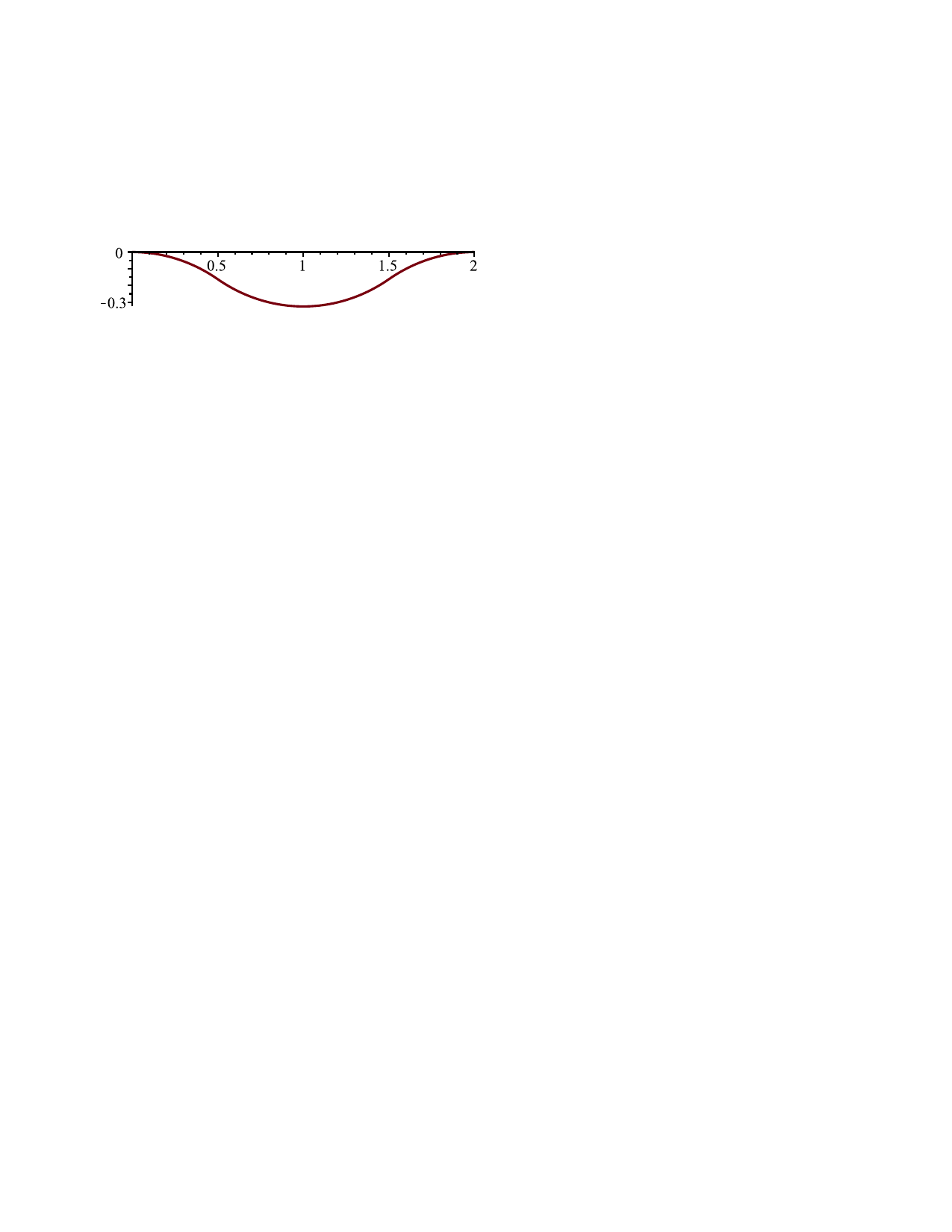}}\hspace{-6cm}{   }\vspace{-9cm}
		\caption{Plots of $x\mapsto -U_c(x+1/2)+U_c(-1/2)$ for $c=1$ (left) and $c=c_0/2$ (right).}
		\label{figure:9} 
	\end{figure} 
\end{remark}

%%%%%%%%%%%%%%%%%%%%%%%%%%%%%%%%%%
%%%%%%%%%%%%%%%%%%%%%%%%%%%%%%%%%%

\subsection{Regularity of minimisers}\label{subsection:3.3}
In order to show regularity we follow the strategy in Section \ref{subsection:2.1}. 
The formula for the first variation of  $W^h(u)$ is derived in \cite[formula (A.1)]{DDG} as follows: 
\begin{align}
\left(W^h\right)'(u)(\varphi) 
 =& 2 \int^1_{-1} \dfrac{\kappa_h(x)}{1 + u'(x)^2} \varphi''(x) \, dx 
   + \int^1_{-1} \kappa_h(x)^2 \dfrac{\sqrt{1 + u'(x)^2}}{u(x)^2} \varphi(x) \, dx \nonumber \\
 &  - 5 \int^1_{-1} \kappa_h(x)^2 \dfrac{u'(x)}{u(x) \sqrt{1 + u'(x)^2}} \varphi'(x) \, dx 
   - 2 \int^1_{-1} \dfrac{\kappa_h(x)}{u(x)^2} \varphi(x) \, dx\label{eq:3.18} \\
 &  + 4 \int^1_{-1} \kappa_h(x) \dfrac{u'(x)}{u(x) (1 + u'(x)^2)} \varphi'(x) \, dx  \nonumber  
\end{align}
for all $\varphi \in H^2_0(0,1)$. 
Let $u$ be a minimiser of $W^h(\cdot)$ in the admissible set $N_\alpha(\psi)$. 
Then $u$ satisfies the variational inequality: 
\begin{equation*}
\forall v \in N_\alpha(\psi):
\quad \left( W^h \right)'(u)(v-u) \ge 0. 
\end{equation*}
One may in particular choose
$v=u-\varphi$ with $\varphi \in H^2_0(0,1)$ and $\varphi\ge 0$ small enough because we push any
admissible function \emph{below} the obstacle. This means that
the minimiser $u$ is here a \emph{subsolution} of the hyperbolic elastica equation, i.e.
\begin{equation}
\label{eq:3.19}
\forall \varphi \in H^2_0(0,1):\quad \varphi\ge 0\quad \Rightarrow\quad 
\left( W^h \right)'(u)(\varphi) \le 0.
\end{equation}   
This shows that $-\left( W^h \right)'(u)$ is a nonnegative distribution
on $C^\infty_{\rm c}(-1,1)$ and hence even on $C^0_{\rm c}(-1,1)$, cf. Section~\ref{subsection:2.3} or \cite[Lemma 37.2]{Tartar}.
Hence, combining \eqref{eq:3.19} with the Riesz representation theorem, we find a nonnegative Radon measure $\mu$ such that 
\begin{equation}
\label{eq:3.20}
\left( W^h \right)'(u)(\varphi) =- \int^1_{-1} \varphi \, d\mu
\end{equation}
for all $\varphi \in C^\infty_{\rm c}(-1,1)$. 
We define $\mathcal{N} \subset (-1,1)$ by  
\begin{align*}
\mathcal{N} := \{ x \in (-1,1) \mid u(x) < \psi(x) \}. 
\end{align*}
Since $u$ and $\psi$ are continuous on $[-1,1]$, the set $\mathcal{N}$ is open, and we have: 
%%%%%%%%%%%%%%%%%%%%%%%%%%%%%%%%%%%%
\begin{lemma} \label{theorem:3.7}
Assume that $\psi \in C^0([-1,1])$ satisfies condition \eqref{eq:C}.  
Suppose that there exists a minimiser $u \in N_\alpha(\psi)$  of $W^h(\cdot)$. 
Then there exist $ a \in (0,1)$ such that $(-1,-a) \cup (a,1) \subset \mathcal{N}$. 
\end{lemma}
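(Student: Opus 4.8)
The plan is to prove this by a pure continuity argument, exploiting that the Dirichlet datum of $u$ and the boundary value of the obstacle are \emph{strictly} ordered: by condition~\eqref{eq:C} we have $u(\pm1)=\alpha<\psi(\pm1)$. In contrast to Lemma~\ref{theorem:2.6}, no fine information on $u'$ near the boundary will be needed here; the strict inequality at the endpoints together with continuity already forces $u<\psi$ on a whole one-sided neighbourhood of each endpoint. Thus the statement is a soft consequence of the structure of~\eqref{eq:C}, and I do not expect any serious obstacle.

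In detail, I would first recall that $u\in N_\alpha(\psi)\subset H^2((-1,1))\hookrightarrow C^0([-1,1])$ (in fact $C^1$), so that $u$ and $\psi$ are both continuous on the closed interval with well-defined pointwise boundary values $u(1)=\alpha$ and $\psi(1)>\alpha$. Then I would set $\varepsilon_0:=\tfrac12\bigl(\psi(1)-\alpha\bigr)>0$. By continuity of $\psi$ at $x=1$ there is $\delta>0$ with $\psi(x)>\alpha+\varepsilon_0$ for all $x\in(1-\delta,1]$, and by continuity of $u$ at $x=1$ there is $\delta'>0$ with $u(x)<\alpha+\varepsilon_0$ for all $x\in(1-\delta',1]$. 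Putting $a:=1-\min\{\delta,\delta'\}\in(0,1)$, it follows that $u(x)<\alpha+\varepsilon_0<\psi(x)$ for every $x\in(a,1)$, i.e.\ $(a,1)\subset\mathcal{N}$.

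Finally, since by~\eqref{eq:C} the obstacle $\psi$ is even, and since $u\in N_\alpha$ is even as well, the set $\mathcal{N}=\{x\in(-1,1)\mid u(x)<\psi(x)\}$ is symmetric about $0$; hence $(-1,-a)\subset\mathcal{N}$ too, and the proof is complete with this same $a$. The only point deserving a word of care is the admissibility of evaluating $u$ and $\psi$ pointwise at and near $x=\pm1$, but this is immediate from the Sobolev embedding and the assumed continuity of $\psi$; no genuine difficulty arises.
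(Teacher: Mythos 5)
Your proof is correct and is essentially the same continuity-at-the-boundary argument that the paper uses for the analogous one-dimensional statement (Lemma~\ref{theorem:2.6}), adapted to the new boundary data; the paper leaves Lemma~\ref{theorem:3.7} unproved precisely because it is this straightforward. One cosmetic point: to guarantee $a=1-\min\{\delta,\delta'\}$ lies in $(0,1)$ you should first shrink $\delta,\delta'$ to be at most, say, $1/2$; this costs nothing and makes the claim $a\in(0,1)$ literal rather than implicit.
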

%%%%%%%%%%%%%%%%%%%%%%%%%%%%%%%%%%%%

%%%%%%%%%%%%%%%%%%%%%%%%%%%%%%%%%%%% 
\begin{lemma} \label{theorem:3.8}
Assume that $\psi \in C^0([-1,1])$ satisfies condition \eqref{eq:C}.  
Suppose that there exists a minimiser $u \in N_\alpha(\psi)$  of $W^h(\cdot)$. 
Then 
\begin{equation*}
\mu(-1,1) < \infty. 
\end{equation*}
\end{lemma}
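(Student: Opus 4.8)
The plan is to mimic the proof of Lemma~\ref{theorem:2.7}, adapting it to the
hyperbolic functional and to the surfaces-of-revolution setting. First I would
invoke Lemma~\ref{theorem:3.7} together with the fact that $\mu(\mathcal{N})=0$
(which follows, exactly as in Section~\ref{subsection:2.3}, because on
$\mathcal{N}$ one may test with $\pm\varphi$ for small $\varphi$, so that
$\left(W^h\right)'(u)(\varphi)=0$ there) to conclude that there is some
$a\in(0,1)$ with $\mu((-1,1))=\mu([-a,a])$. Then I would fix a cut-off function
$\eta\in C^\infty_{\rm c}(-1,1)$ with $\eta\equiv 1$ on $[-a,a]$ and
$0\le\eta\le 1$ on $(-1,1)$, and estimate
\begin{equation*}
\mu((-1,1))=\mu([-a,a])\le\int^1_{-1}\eta\,d\mu=-\left(W^h\right)'(u)(\eta),
\end{equation*}
using \eqref{eq:3.20}, \eqref{eq:3.21} and the nonnegativity of $\eta$.

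The core of the argument is then to bound $|\left(W^h\right)'(u)(\eta)|$ by a
constant depending only on $\eta$ and on quantities already known to be finite.
Here I would plug $\varphi=\eta$ into the explicit first-variation formula
\eqref{eq:3.19}. Each of the five integrals is estimated termwise by
H\"older's inequality, using the a-priori bounds available for a minimiser: by
Lemma~\ref{theorem:3.2} (applied with $W(u)=M_\alpha(\psi)<4\pi$) we have
$\|u'\|_{L^\infty}\le\tilde K<\infty$ and $0<\tilde M\le u\le\alpha+\tilde K$ on
$[-1,1]$, so all the weights $1/u$, $1/u^2$, $u'/(u\sqrt{1+u'^2})$,
$1/(1+u'^2)$ appearing in \eqref{eq:3.19} are bounded; moreover
$\kappa_h=\dfrac{u u''}{(1+u'^2)^{3/2}}+\dfrac{1}{\sqrt{1+u'^2}}$, so
$\|\kappa_h\|_{L^2}$ and $\|\kappa_h\|_{L^1}$ are controlled by
$\|u''\|_{L^2}$, which in turn is controlled since
$W(u)=\dfrac{\pi}{2}W^h(u)<4\pi$ together with the lower bound on $u$ and the
upper bound on $u'$ forces $\int u''^2\,dx<\infty$. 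Thus, for instance, the
first term is $\le 2\|\kappa_h/(1+u'^2)\|_{L^2}\|\eta''\|_{L^2}\le
C\|u''\|_{L^2}\|\eta''\|_{L^2}$, the term with $\kappa_h^2$ against $\varphi$ is
$\le C\|\kappa_h\|_{L^2}^2\|\eta\|_{L^\infty}$, and so on; collecting these
yields
$|\left(W^h\right)'(u)(\eta)|\le C\big(\|\eta''\|_{L^2}+\|\eta'\|_{L^\infty}+\|\eta\|_{L^\infty}\big)$
with $C=C(\alpha,\psi,M_\alpha(\psi))$.

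Putting the two steps together gives $\mu((-1,1))\le C<\infty$, which is the
claim. The main obstacle — though a mild one — is bookkeeping: one must verify
that every weight occurring in \eqref{eq:3.19} is indeed bounded above (and,
where a $1/u$ or $1/u^2$ appears, bounded away from zero in the denominator),
which is exactly what the two-sided bound $0<\tilde M\le u\le\alpha+\tilde K$
from Lemma~\ref{theorem:3.2} provides, and that $\kappa_h\in L^2$, which rests
on the finiteness of $W^h(u)$ combined with these same bounds. No new idea
beyond the one-dimensional case is required; the extra zeroth-order terms in
\eqref{eq:3.19} (compared with \eqref{eq:2.6}) are harmless because $\eta$ and
$\kappa_h$ are controlled in the relevant norms and $1/u^2$ is bounded.
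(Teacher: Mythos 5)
Your proposal is correct and follows essentially the same route as the paper: $\mu(\mathcal N)=0$ plus Lemma~\ref{theorem:3.7} reduce the measure to a compact set, a cut-off $\eta$ is inserted, and $-\bigl(W^h\bigr)'(u)(\eta)$ is bounded termwise from \eqref{eq:3.19} using boundedness of the weights and $\|u''\|_{L^2}<\infty$, leading to $\mu(-1,1)\le C\|\eta\|_{H^2}$. One small remark: the appeal to Lemma~\ref{theorem:3.2} (and hence to $M_\alpha(\psi)<4\pi$) is not needed here and is not part of the hypotheses of Lemma~\ref{theorem:3.8}; the paper instead just uses that the fixed minimiser $u\in N_\alpha(\psi)\subset H^2((-1,1);(0,\infty))\hookrightarrow C^1([-1,1];(0,\infty))$ already has $u$ bounded above and away from zero and $u'$ bounded, which suffices for the termwise estimates (and in particular gives $|\kappa_h|\le C(|u''|+1)$).
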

%%%%%%%%%%%%%%%%%%%%%%%%%%%%%%%%%%%%
\begin{proof}%Change: To make clear what is really needed I added the following sentence.
	As in Section~\ref{subsection:2.3} one finds first that
	$$
	\mu(\mathcal{N})=0.
	$$
Hence, by Lemma \ref{theorem:3.7} we find $a\in (0,1)$ such that $\mu(-1,1) = \mu([-a,a])$. 
Fix $\eta \in C^\infty_{\rm c}(-1,1)$ with $\eta \equiv 1$ in $[-a,a]$ and $0 \le \eta \le 1$ in $(-1,1)$. 
It follows from $u \in N_\alpha(\psi)\subset H^2((-1,1);(0,\infty))\hookrightarrow C^1([-1,1],(0,\infty))$  that 
\begin{equation*}
|\kappa_h(x)| \le C(|u''(x)| + 1),  
\end{equation*}
and further  that 
\begin{align*}
\mu(-1,1) &=\mu([-a,a]) \le \int^1_{-1} \eta(x) d\mu 
  =- \left( W^h \right)'(u)(\eta) \\
 &\le C( \| u''\|_{L^2(-1,1)} \|\eta \|_{H^2(-1,1)} + \|\eta \|_{W^{2,1}(-1,1)} \\ 
 & \qquad \qquad +  \| u''\|_{L^2(-1,1)}^2 \| \eta \|_{W^{1, \infty}(-1,1)} ) \\
 &\le C \| \eta \|_{H^2(-1,1)}.  
\end{align*}
Therefore Lemma \ref{theorem:3.8} follows. 
\end{proof}

In order to study the regularity of minimisers, we employ the same ideas
already  used in \cite[Proof of Theorem~3.9, Step 2]{DDG}, see also Section \ref{subsection:2.3}. 
%%%%%%%%%%%%%%%%%%%%%%%%%%%%%%%%%%%%%%%%%%%%%
\begin{lemma} \label{theorem:3.9}
Fix $\eta \in C^\infty_{\rm c}(-1,1)$ and set 
\begin{align*}
\varphi_1(x) &:= \int^x_{-1} \! \int^y_{-1} \eta(s) \, ds dy + \alpha (x+1)^2 + \beta (x+1)^3, \\
\varphi_2(x) &:= \int^x_{-1} \eta(y) \, dy + \dfrac{1}{4}(-3(x+1)^2 +(x+1)^3) \int^1_{-1} \eta(y) \, dy, 
\end{align*}
for $x \in [-1,1]$, where 
\begin{equation*}
\alpha := \dfrac{1}{2}\int^1_{-1} \eta(y) \, dy - \dfrac{3}{4} \int^1_{-1} \! \int^y_{-1} \eta(s) \, ds dy, \qquad 
\beta := -\dfrac{1}{2}\alpha - \dfrac{1}{8}\int^1_{-1} \! \int^y_{-1} \eta(s) \, ds dy.
\end{equation*}
Then, $\varphi_1, \varphi_2 \in H^2_0(-1,1)$ and there exists $C>0$ such that 
\begin{gather*}
\| \varphi_1 \|_{C^1(-1,1)}, |\alpha|, |\beta| \le C \| \eta \|_{L^1(-1,1)}, \\
\| \varphi_2 \|_{L^\infty(-1,1)} \le C \| \eta\|_{L^1(-1,1)}, \qquad \| \varphi'_2 \|_{L^p(-1,1)} \le C \| \eta \|_{L^p(-1,1)} \quad \text{for} \quad p \in [1, \infty). 
\end{gather*}
\end{lemma}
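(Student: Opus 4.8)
The plan is to verify directly that the explicitly prescribed polynomial corrections make $\varphi_1$ and $\varphi_2$ lie in $H^2_0(-1,1)$, and then to read off the claimed bounds from the triangle inequality together with H\"older's inequality on the interval $(-1,1)$, which has fixed length $2$. This is the exact analogue of Lemma~\ref{theorem:2.8}, transplanted from $(0,1)$ to $(-1,1)$; only the numerical coefficients of the correcting polynomials change, and all constants below are absolute because the interval is fixed.

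First I would treat $\varphi_1$. Being the sum of the iterated integral $x\mapsto\int_{-1}^x\!\int_{-1}^y\eta(s)\,ds\,dy$ of a function $\eta\in C^\infty_{\rm c}(-1,1)$ and a polynomial in $(x+1)$ with neither constant nor linear term, $\varphi_1$ is automatically of class $C^2([-1,1])$ and satisfies $\varphi_1(-1)=\varphi_1'(-1)=0$. To obtain $\varphi_1(1)=\varphi_1'(1)=0$ one evaluates $\varphi_1$ and $\varphi_1'(x)=\int_{-1}^x\eta+2\alpha(x+1)+3\beta(x+1)^2$ at $x=1$; this yields the two linear equations $\int_{-1}^1\!\int_{-1}^y\eta\,ds\,dy+4\alpha+8\beta=0$ and $\int_{-1}^1\eta+4\alpha+12\beta=0$, whose (unique, since the relevant determinant is nonzero) solution is precisely the stated pair $(\alpha,\beta)$ --- a one-line check. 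For the estimates one uses $\bigl|\int_{-1}^1\!\int_{-1}^y\eta\,ds\,dy\bigr|\le 2\|\eta\|_{L^1(-1,1)}$ and $\bigl|\int_{-1}^1\eta\bigr|\le\|\eta\|_{L^1(-1,1)}$, which immediately give $|\alpha|,|\beta|\le C\|\eta\|_{L^1(-1,1)}$; then $\|\varphi_1\|_{C^0}$ and $\|\varphi_1'\|_{C^0}$ are each dominated by an absolute multiple of $\|\eta\|_{L^1}+|\alpha|+|\beta|$, so $\|\varphi_1\|_{C^1(-1,1)}\le C\|\eta\|_{L^1(-1,1)}$.

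Next I would treat $\varphi_2=\int_{-1}^x\eta+\tfrac14\bigl(-3(x+1)^2+(x+1)^3\bigr)\int_{-1}^1\eta$. Again $\varphi_2\in C^2([-1,1])$ and $\varphi_2(-1)=0$, while $\varphi_2'(-1)=\eta(-1)=0$ since $\eta$ has compact support in $(-1,1)$; evaluating at $x=1$ gives $\varphi_2(1)=\int_{-1}^1\eta+\tfrac14(-12+8)\int_{-1}^1\eta=0$ and $\varphi_2'(1)=\eta(1)+\tfrac14(-12+12)\int_{-1}^1\eta=0$, hence $\varphi_2\in H^2_0(-1,1)$. The $L^\infty$-bound for $\varphi_2$ follows from $\|\int_{-1}^{\,\cdot}\eta\|_{L^\infty}\le\|\eta\|_{L^1}$ and the boundedness of the correcting polynomial on $[-1,1]$. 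For $\varphi_2'=\eta+\tfrac14\bigl(-6(x+1)+3(x+1)^2\bigr)\int_{-1}^1\eta$ one gets $\|\varphi_2'\|_{L^p}\le\|\eta\|_{L^p}+C\bigl|\int_{-1}^1\eta\bigr|\le\|\eta\|_{L^p}+C\|\eta\|_{L^1}\le C\|\eta\|_{L^p}$, where the last step uses $\|\eta\|_{L^1(-1,1)}\le 2^{1-1/p}\|\eta\|_{L^p(-1,1)}$, again H\"older's inequality.

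There is essentially no obstacle here: the content is purely the bookkeeping of the four boundary conditions together with the triangle inequality. The one point deserving a moment's care is the final estimate, where one passes from an $L^1$-norm to an $L^p$-norm; this is legitimate only because the underlying interval is bounded, and it is precisely here that H\"older's inequality enters.
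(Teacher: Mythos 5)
The paper gives no explicit proof of Lemma~\ref{theorem:3.9} (it is stated and deferred to the analogous Lemma~\ref{theorem:2.8} and to \cite{DDG}), so the comparison is against the implied elementary argument; your proposal is exactly that argument, carried out correctly. All the boundary-condition checks, the linear system determining $(\alpha,\beta)$ and its match with the stated formulas, and the $L^1$/$L^p$/$C^1$ estimates via the triangle and H\"older inequalities on the fixed bounded interval are verified correctly, so the proof is complete and faithful to what the paper intends.
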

%%%%%%%%%%%%%%%%%%%%%%%%%%%%%%%%%%%%%%%%%%%%%

%%%%%%%%%%%%%%%%%%%%%%%%%%%%%%%%%%%%%%%%%%%%%
\begin{proposition} \label{theorem:3.10}
Assume that $\psi $ satisfies condition~\eqref{eq:C}.
Suppose that there exists a minimiser $u \in N_\alpha(\psi)$  of $W^h(\cdot)$.   
Then $u\in C^2([-1,1])$, $u''$ is weakly differentiable and $u'''\in BV(-1,1)$. 
On the complement of the coincidence set the minimiser is even smooth, i.e. $u|_{\mathcal N}\in C^\infty ({\mathcal N})$.
\end{proposition}
%%%%%%%%%%%%%%%%%%%%%%%%%%%%%%%%%%%%%%%%%%%%%
\begin{proof}
We define a nonnegative bounded increasing function $m : (-1,1) \to \mathbb{R}$ by 
\begin{equation*}
m(x) = \mu(-1,x) \quad \text{for} \quad x \in (-1,1). 
\end{equation*}
Then, along the same lines as in the proof of Proposition \ref{theorem:2.8}, we obtain 
\begin{equation} \label{eq:3.21}
\int^1_{-1} \varphi \, d \mu(x) = - \int ^1_{-1} m(x) \varphi'(x) \, dx 
\end{equation}
for all $\varphi \in C^\infty_{\rm c}(-1,1)$. 
It follows from \eqref{eq:3.18}, \eqref{eq:3.20} and \eqref{eq:3.21} that 
\begin{equation}
\label{eq:3.22}
\begin{aligned}
&2 \int^1_{-1} \dfrac{\kappa_h(x)}{1 + u'(x)^2} \varphi''(x) \, dx \\
& \quad = - \int^1_{-1} \dfrac{\kappa_h(x)^2 \sqrt{1 + u'(x)^2}}{u(x)^2} \varphi(x) \, dx \\
& \qquad + 5 \int^1_{-1} \dfrac{\kappa_h(x)^2 u'(x)}{u(x) \sqrt{1 + u'(x)^2}} \varphi'(x) \, dx   
               + 2 \int^1_{-1} \dfrac{\kappa_h(x)}{u(x)^2} \varphi(x) \, dx \\
 & \qquad - 4 \int^1_{-1} \dfrac{\kappa_h(x) u'(x)}{u(x) (1 + u'(x)^2)} \varphi'(x) \, dx
  + \int ^1_{-1} m(x) \varphi'(x) \, dx  
\end{aligned}
\end{equation}
for all $\varphi \in C^\infty_{\rm c}(-1,1)$. 
By a density argument, we see that \eqref{eq:3.22} also holds for all $\varphi \in H^2_0(-1,1)$. 

Fix $\eta \in C^\infty_{\rm c}(-1,1)$ arbitrarily. 
Taking $\varphi_1$ as $\varphi$ in \eqref{eq:3.22}, where $\varphi_1$ is defined in Lemma~\ref{theorem:3.9}, we have 
\begin{align}
& 2 \int^1_{-1} \dfrac{\kappa_h(x)}{1 + u'(x)^2} \eta(x) \, dx \nonumber\\
& \quad = -4 \int^1_{-1} \dfrac{\kappa_h(x)}{1 + u'(x)^2} (\alpha + 3 \beta x + 3\beta) \, dx
 - \int^1_{-1} \dfrac{\kappa_h(x)^2 \sqrt{1 + u'(x)^2}}{u(x)^2} \varphi_1(x) \, dx \nonumber\\
& \qquad + 5 \int^1_{-1} \dfrac{\kappa_h(x)^2 u'(x)}{u(x) \sqrt{1 + u'(x)^2}} \varphi'_1(x) \, dx   
               + 2 \int^1_{-1} \dfrac{\kappa_h(x)}{u(x)^2} \varphi_1(x) \, dx \label{eq:3.23}\\
 & \qquad - 4 \int^1_{-1} \dfrac{\kappa_h(x) u'(x)}{u(x) (1 + u'(x)^2)} \varphi'_1(x) \, dx 
  + \int ^1_{-1} m(x) \varphi'_1(x) \, dx \nonumber\\
& \quad =: J_1 + J_2 + J_3 + J_4 + J_5+J_6. \nonumber
\end{align}
Because any minimiser belongs to $N_\alpha (\psi)\subset C^1([-1,1],(0,\infty))$, we observe from Lemmas  \ref{theorem:3.8} and \ref{theorem:3.9} that  
\begin{align*}
|J_1| & \le C(|\alpha| + |\beta|) (\| u'' \|_{L^1(-1,1)}+1) \le C \| \eta\|_{L^1(-1,1)}, \\
|J_2|+|J_3| & \le C (\| u'' \|^2_{L^2(-1,1)}+1) \|\varphi_1 \|_{C^1([-1,1])} \le C \| \eta \|_{L^1(-1,1)}, \\
|J_4| & \le C(\| u'' \|_{L^1(-1,1)}+1) \| \varphi_1 \|_{C^0([-1,1])} \le C \| \eta \|_{L^1(-1,1)}, \\
|J_5| & \le C(\| u'' \|_{L^1(-1,1)}+1) \| \varphi_1 \|_{C^1([-1,1])} \le C \| \eta \|_{L^1(-1,1)}, \\
|J_6| & \le \Bigl(\sup_{x \in [-1,1]} m(x)\Bigr)  \| \varphi_1 \|_{C^1([-1,1])} \le C \mu(-1,1) \| \eta \|_{L^1(-1,1)} \le C \| \eta \|_{L^1(-1,1)}.  
\end{align*}
This together with \eqref{eq:3.23} implies that 
\begin{equation}
\label{eq:3.24}
\| \kappa_h(x) (1+u'(x)^2)^{-1} \|_{L^\infty(-1,1)} \le C. 
\end{equation}
Combining \eqref{eq:3.24} with $u \in C^1([-1,1],(0,\infty))$, we obtain 
\begin{equation}
\label{eq:3.25}
\| u'' \|_{L^\infty(-1,1)} \le C. 
\end{equation}
This shows that we already have  $u\in W^{2,\infty}((-1,1);(0,\infty))$.

Fix $\eta \in C^\infty_{\rm c}(-1,1)$ arbitrarily. 
Taking $\varphi_2$ as $\varphi$ in \eqref{eq:3.22}, where $\varphi_2$ is defined by Lemma~\ref{theorem:3.9}, we have 
\begin{align*}
& 2 \int^1_{-1} \dfrac{\kappa_h(x)}{1 + u'(x)^2} \eta'(x) \, dx 
 = -3% Change: Attention please! I changed \frac32 into 3; this seems to be the correct factor. We have got a 2 in front of the left integral!!
 \left( \int^1_{-1} x \dfrac{\kappa_h(x)}{1 + u'(x)^2} \, dx\right) \ \left(\int^1_{-1} \eta(x) \, dx\right)  \\
 & \qquad \qquad  - \int^1_{-1} \dfrac{\kappa_h(x)^2 \sqrt{1 + u'(x)^2}}{u(x)^2} \varphi_2(x) \, dx\\
& \qquad \qquad  + 5 \int^1_{-1} \dfrac{\kappa_h(x)^2 u'(x)}{u(x) \sqrt{1 + u'(x)^2}} \varphi'_2(x) \, dx  
          + 2 \int^1_{-1} \dfrac{\kappa_h(x)}{u(x)^2} \varphi_2(x) \, dx \\
& \qquad \qquad - 4 \int^1_{-1} \dfrac{\kappa_h(x) u'(x)}{u(x) (1 + u'(x)^2)} \varphi'_2(x) \, dx 
  + \int ^1_{-1} m(x) \varphi'_2(x) \, dx \\
& =: J'_1 + J'_2 + J'_3 + J'_4 + J'_5+ J'_6. 
\end{align*}
We deduce from \eqref{eq:3.25} and $u \in  H^2((-1,1);(0,\infty))\hookrightarrow C^1([-1,1],(0,\infty))$
 that 
\begin{align*}
|J'_2|+|J'_3| \le C (\| u''\|_{L^\infty(-1,1)}^2 +1) \| \varphi_2\|_{W^{1,1}(-1,1)} 
       \le C \| \eta \|_{L^1(-1,1)}. 
\end{align*}
Similarly as above, we obtain 
\begin{align*}
|J'_1| &\le C (\| u''\|_{L^\infty(-1,1)}+1) \| \eta\|_{L^1(-1,1)} \le C \| \eta\|_{L^1(-1,1)}, \\
|J'_4| &\le C (\| u''\|_{L^\infty(-1,1)}+1) \| \varphi_2 \|_{L^1(-1,1)} \le C \| \eta\|_{L^1(-1,1)}, \\
|J'_5| &\le C (\| u''\|_{L^\infty(-1,1)}^2 +1) \| \varphi'_2\|_{L^1(-1,1)} \le C \| \eta \|_{L^1(-1,1)}, \\
|J'_6| &\le \Bigl( \sup_{x \in [-1,1]} m(x) \Bigr) \| \varphi'_2\|_{L^1(-1,1)} \le C \mu(-1,1) \|\eta\|_{L^1(-1,1)} \le C \|\eta\|_{L^1(-1,1)}.  
\end{align*}
Thus we observe that 
\begin{equation*}
\Bigl| \int^1_{-1} \dfrac{\kappa_h(x)}{1+u'(x)^2} \eta'(x) \, dx \Bigr| \le C \| \eta\|_{L^1(-1,1)} 
\end{equation*}
for all $\eta \in C^\infty_{\rm c}(-1,1)$, and then 
\begin{equation*}
\| (\kappa_h (1 + (u')^2)^{-1})' \|_{L^{\infty}(-1,1)} \le C. 
\end{equation*}
This together with $u\in W^{2,\infty}((-1,1);(0,\infty))$ (see \eqref{eq:3.25})  implies that 
\begin{equation*}
\| u'''\|_{L^{\infty}(-1,1)} <\infty. 
\end{equation*}
Finally, with the help of the absolutely continuous function
$$
f(x):=  \int^x_{-1} \dfrac{\kappa_h(\xi)^2 \sqrt{1 + u'(\xi)^2}}{u(\xi)^2} \, d\xi    
- 2 \int^x_{-1} \dfrac{\kappa_h(\xi)}{u(\xi)^2}  \, d\xi,
$$
\eqref{eq:3.22} may be written in the following form
\begin{align*}
&2 \int^1_{-1} \left( \dfrac{\kappa_h(x)}{1 + u'(x)^2}\right)' \varphi'(x) \, dx 
+\int^1_{-1} f(x) \varphi'(x)\, dx  \\
& \qquad
 + 5 \int^1_{-1} \dfrac{\kappa_h(x)^2 u'(x)}{u(x) \sqrt{1 + u'(x)^2}} \varphi'(x) \, dx   
 - 4 \int^1_{-1} \dfrac{\kappa_h(x) u'(x)}{u(x) (1 + u'(x)^2)} \varphi'(x) \, dx \\
 & \qquad
+ \int ^1_{-1} m(x) \varphi'(x) \, dx  =0
\end{align*}
for all $\varphi \in C^\infty_{\rm c}(-1,1)$. This shows that there exists
a constant $c\in\mathbb{R}$ such that
\begin{equation*}
2 \left( \dfrac{\kappa_h(x)}{1 + u'(x)^2}\right)' 
+ f(x) 
+ 5  \dfrac{\kappa_h(x)^2 u'(x)}{u(x) \sqrt{1 + u'(x)^2}}  
- 4  \dfrac{\kappa_h(x) u'(x)}{u(x) (1 + u'(x)^2)} 
+  m(x) =c.
\end{equation*}
Since $m$ is increasing and bounded and we already know that $u\in W^{3,\infty}(-1,1)$ we conclude that $\kappa_h'$ and also $u'''$ is of bounded
variation. Since $u|_{\mathcal N}$ is a solution of the elliptic differential  equation \eqref{eq:3.19} for hyperbolic elasticae, further boot strapping as in Step 2 of the proof  of \cite[Theorem 3.9]{DDG} shows finally that $u|_{\mathcal N}\in C^\infty ({\mathcal N})$.
Proposition~\ref{theorem:3.10} follows. 
\end{proof}

\noindent
{\bf Acknowledgements.}
This work was initiated during the first author's visit at Tohoku University
(before the Corona pandemic).
The first author is very grateful to the second author for his warm hospitality and the inspiring working atmosphere.
The second author was supported in part by JSPS KAKENHI Grant Numbers JP19H05599 and JP20KK0057.

Both authors are very grateful to the referees for their careful reading of the manuscript and for numerous very helpful remarks, questions, and suggestions.

%%%%%%%%%%%%%%%%%%%%%%%%%%%%%%%%%%%%%%%%%%%%%%%%
%%%%%%%%%%%%    references    %%%%%%%%%%%%%%%%%%
%%%%%%%%%%%%%%%%%%%%%%%%%%%%%%%%%%%%%%%%%%%%%%%%

\end{document}